\numberwithin{equation}{section}
\theoremstyle{plain} 
\newtheorem{theorem}{Theorem}[section]
\newtheorem{lemma}[theorem]{Lemma}
\newtheorem{corollary}[theorem]{Corollary}
\newtheorem{proposition}[theorem]{Proposition}
\theoremstyle{definition} 
\newtheorem{remark}[theorem]{Remark}
\def\N{\mathbb N}
\def\R{\mathbb R}
\def\address#1#2{\begingroup
\noindent\parbox[t]{7.8cm}{%
\small{\scshape\ignorespaces#1}\par\vskip1ex
\noindent\small{\itshape E-mail address}%
\/: #2\par\vskip4ex}\hfill%
\endgroup}%
\title{\uppercase{A sharp criterion and complete classification of global-in-time solutions and finite time blow-up of solutions to a chemotaxis system in supercritical dimensions}} 
\author{
\bigskip \\
\textsc{Yuri Soga} 
}
\date{\today} 
\begin{document}

\maketitle

\footnote{ 
2020 \textit{Mathematics Subject Classification}.
Primary 35K55; Secondary 35B44, 35B51, 35Q92}
\footnote{ 
\textit{Key words and phrases}.
chemotaxis, blowup, global existence, comparison principles
}

\vspace{-1cm}
\begin{abstract}
We consider the chemotaxis system with indirect signal production in the whole space,
\begin{equation}\label{abst:p}\tag{$\star$}
\begin{cases}
u_t = \Delta u - \nabla \cdot (u\nabla v),\\
0 = \Delta v + w,\\
w_t = \Delta w + u
\end{cases}
\end{equation}
with emphasis on supercritical dimensions. In contrast to the classical parabolic-elliptic Keller--Segel system, where the analysis can be reduced to a single equation, the above system is essentially parabolic-parabolic and does not admit such a reduction. 
In this paper, we establish a sharp threshold phenomenon separating
global-in-time existence from finite time blow-up in terms of scaling-critical Morrey norms of the initial data. In particular, we prove the existence of singular stationary solutions and show that their Morrey norm values serve as the critical thresholds determining the long-time behavior of solutions. Consequently, we identify new critical exponents at which the long-time behavior of solutions changes. This yields a complete classification of the long-time behavior of solutions, providing the first such results for the essentially parabolic-parabolic chemotaxis system \eqref{abst:p} in supercritical dimensions.
\end{abstract}
\newpage

\tableofcontents
 

\section{Introduction}

\subsection{Biological and Mathematical Background}

Chemotaxis refers to the directed movement of cells in response to gradients of chemical signals and plays a crucial role in a variety of biological phenomena.
Examples include aggregation and pattern formation in microbial populations, as well as coordinated cell migration in developmental and pathological processes.
These observations have inspired extensive mathematical studies of chemotaxis models, most notably those based on systems of partial differential equations. Among them, the Keller--Segel model (cf. \eqref{KS} below), originally introduced by Keller and Segel \cite{KS1970} to describe the migration of cells guided by chemical signals generated by the cell population itself, has long served as a fundamental model in the study of chemotaxis. Originally motivated by biological observations, the Keller--Segel model has also stimulated extensive mathematical research over the years, leading to a deeper understanding of its structural and dynamical properties (see \cite{BHeN1994, B1998, B1999, HV1996, HV1997, N1995, NSS2000, NSY1997, BBTW2015}).
In this paper, we consider the Cauchy problem for the chemotaxis system:
\begin{equation}\label{p}\tag{P}
\begin{cases}
u_t = \Delta u - \nabla \cdot (u\nabla v)\qquad &\mathrm{in}\ \R^d \times (0,\infty),\\
0 = \Delta v + w\qquad &\mathrm{in}\ \R^d \times (0,\infty),\\
w_t = \Delta w + u \qquad &\mathrm{in}\ \R^d \times (0,\infty),\\
(u(\cdot,0),w(\cdot, 0)) = (u_0(\cdot),w_0(\cdot))\qquad &\mathrm{in}\ \R^d.
\end{cases}
\end{equation}
The system \eqref{p} features an indirect signal production mechanism, where the chemoattractant potential $v$ is determined elliptically from an intermediate variable $w$, while $w$ itself evolves according to a parabolic equation driven by the cell density $u$. The system \eqref{p} can be regarded as a simplified version of the chemotaxis model:
\begin{equation}\label{p_0}
\begin{cases}
u_t = \Delta u - \nabla \cdot (u\nabla v) & \mathrm{in}\ \Omega \times (0,\infty),\\
\tau_1v_t =  \gamma_1\Delta v - \delta_1v + w & \mathrm{in}\ \Omega \times (0,\infty),\\
\tau_2w_t = \gamma_2\Delta w - \delta_2w + u & \mathrm{in}\ \Omega \times (0,\infty),
\end{cases}
\end{equation}
where $\Omega$ is either a bounded domain or the whole space and the parameters $\tau_1, \tau_2, \gamma_1, \gamma_2, \delta_1, \delta_2$ are nonnegative. The chemotaxis model \eqref{p_0} is a specific instance of a system proposed by \cite{Luca2003} in the context of biological processes related to Alzheimer's disease. In particular, the model \eqref{p_0} with $\gamma_2 = 0$ has also been used to model the cluster attack behavior of the Mountain Pine Beetle, as proposed in \cite{STP2013}.
From a mathematical point of view, the chemotaxis model \eqref{p_0} has been studied in various parameter regimes.
In the fully parabolic case, several analytical results are available in the literature (see \cite{FS2017, FS2019, HL2025, J2015}, for instance). On the other hand, when $\gamma_2 = 0$ the system reduces to a parabolic–parabolic–ODE (or parabolic–elliptic–ODE) type, for which different aspects of solutions behavior have been investigated in recent years (see \cite{Lau2019, S2025, TW2017}, for instance). 

\subsection{Mass critical phenomena in critical dimensions}

The system \eqref{p} can be viewed as a variant of the classical Keller--Segel system:
\begin{equation}\label{KS}
\begin{cases}
u_t = \Delta u - \nabla \cdot (u\nabla v),\\
\tau v_t = \Delta v + u
\end{cases}
\end{equation}
in $\R^2$, where the parameter $\tau$ is nonnegative. The Keller--Segel system enjoys the scaling invariance:
\begin{align}
u_\lambda (x,t) = \lambda^{2}u(\lambda x, \lambda^2 t),\quad v_\lambda (x,t) = v(\lambda x, \lambda^2 t)\quad\mathrm{for}\ \lambda > 0.\label{KS:scale}
\end{align}
In addition, from a biological modeling perspective, the Keller--Segel system is expected to satisfy the mass conservation law $\|u(t)\|_{L^1(\R^2)} = \|u_0\|_{L^1(\R^2)}$, which follows from the fact that the first equation is written in divergence form. Consequently, the $L^1$-norm of $u$ plays a distinguished role. The scaling leaves the $L^1$-norm invariant if and only if the spatial dimension satisfies $d=2$, which therefore constitutes the critical dimension.  In view of the above observations, the two-dimensional case occupies a distinguished position in the theory of the Keller--Segel system, both in bounded domains and in the whole space. Indeed, in two space dimensions, the long-time behavior of solutions of \eqref{KS} is known to depend sensitively on the total initial mass. This criticality gives rise to a threshold phenomenon separating global existence from blow-up, which is commonly referred to as the $8\pi$-problem. More precisely, when the initial cell density $u_0$ with radial symmetry is given by a nonnegative measure whose total mass satisfies $\|u_0\|_{L^1} < 8\pi$, the corresponding radial solution remains well-defined for all $t > 0$ and does not develop singularities (see \cite{NSY1997, B1998, BN1994, M2013, CC2008}). While if $\|u_0\|_{L^1} > 8\pi$, then a finite time blow-up may occur (see \cite{N1995, NSS2000, HV1996, HV1997, M2020, BN1994}). 

The systems \eqref{p} and \eqref{p_0} with $\delta_1 = \delta_2 = 0$ admit a scaling transformation analogous to that of the Keller--Segel system.
Nevertheless, due to its indirect signal production mechanism, the associated scaling properties lead to a distinct notion of criticality. Indeed these systems are invariant under the natural scaling: for $\lambda > 0$
\begin{align}
u_\lambda (x,t) = \lambda^{4}u(\lambda x, \lambda^2 t),\quad v_\lambda (x,t) = v(\lambda x, \lambda^2 t),\quad
w_\lambda(x,t) = \lambda^2 w(\lambda x, \lambda^2 t).\label{scale}
\end{align}
Under this scaling, the critical dimension in the above sense is $d = 4$. Indeed, in \cite{FS2017, FS2019}, the authors revealed several characteristic
features of solutions to \eqref{p_0} in the critical dimension $d=4$
on bounded domains. Here, we restrict ourselves to the case where all parameters are set equal to one. If one has the assumption that the initial data $(u_0,v_0,w_0)$ satisfies $\|u_0\|_{L^1(\Omega)} < (8\pi)^2$, then the corresponding solution of \eqref{p_0} exists globally in time and remains uniformly bounded in time, whereas it was shown that there exists the initial data such that $\|u_0\|_{L^1(\Omega)} \in ((8\pi)^2, \infty) \setminus (8\pi)^2\N$ and the corresponding solution blows up. In the whole space $\R^4$, it was proved in \cite{HL2025} that the solution to \eqref{p_0} exists globally in time whenever $\|u_0\|_{L^1(\R^4)} < (8\pi)^2$.

Such mass critical phenomena are characteristic of scale critical dimensions and are absent in higher, supercritical settings. This naturally raises the question of what determines the long-time dynamics of solutions to \eqref{p} and \eqref{p_0}, and \eqref{KS} when the systems are considered in supercritical dimensions.

\subsection{Parabolic-elliptic model in higher dimensions $d \ge 3$}

In this subsection, we consider the Keller--Segel system \eqref{KS} with $\tau=0$ corresponding to the parabolic-elliptic case:
\begin{equation}\label{KS_0}
\begin{cases}
u_t = \Delta u - \nabla \cdot (u\nabla v)\qquad &\mathrm{in}\ \R^d \times (0,\infty),\\
0 = \Delta v + u\qquad &\mathrm{in}\ \R^d \times (0,\infty)
\end{cases}
\end{equation}
and discuss the long-time behavior of its solutions in supercritical dimensions. 
As noted in previous subsection, a central issue is to identify the mechanisms that control the long-time behavior of solutions. One possible perspective for addressing this issue is provided by the existence of a singular stationary solution, such as the well-known Chandrasekhar steady state solution in \cite{Chan1942}
\begin{align*}
u_C (x) = \dfrac{2(d-2)}{|x|^2}.
\end{align*}
Indeed, such singular stationary solution plays a crucial role in the analysis 
in supercritical dimensions of the solutions to the simplified Keller--Segel system \eqref{KS_0}. In this context, two different criteria have been employed
to characterize the role of the singular stationary solution $u_C$ in the dynamics of \eqref{KS_0}. 

One approach is based on conditions formulated in terms of critical Morrey norms. For instance, according to \cite{BKP2019},
if the initial data $u_0$ with radially symmetry belongs to the critical Morrey space $M^\frac{d}{2}(\R^d)$ which are invariant under the scaling argument \eqref{KS:scale}, and satisfies the upper bound
\begin{align*}
\|u_0\|_{M^\frac{d}{2}(\R^d)} < 2\sigma_d = \|u_C\|_{M^\frac{d}{2}(\R^d)},
\end{align*}
where $\sigma_d$ is the measure of the unit sphere in $\R^d$,
then corresponding solution of \eqref{KS_0} exists globally in time in an appropriate functional class and enjoys the same bound
\begin{align*}
\|u(t)\|_{M^\frac{d}{2}(\R^d)} < 2\sigma_d\quad\mathrm{for\ all}\ t> 0.
\end{align*}
Moreover, it was also demonstrated in \cite{N2021} that for some initial data with $\|u_0\|_{M^{\frac{d}{2}}(\R^d)} > 2\sigma_d$, the behavior of the corresponding solution to \eqref{KS_0} depends on the spatial dimension $d$. 
In dimensions $d\ge 10$, one can choose such $u_0$ so that the corresponding  solution of \eqref{KS_0} blows up in finite time,
whereas in dimensions $3\le d\le 9$, there exists initial data with the same condition such that the corresponding solution of \eqref{KS_0} exists globally in time. 

Another approach focuses on a direct comparison between the initial data and the singular stationary solution itself. Recently, \cite{W2023_2} established that, whenever the initial data $u_0 \in BUC(\R^d)$ satisfies $u_0 \leq u_C$, \eqref{KS_0} admits a unique global classical solution and the solution enjoys the upper bound:
\begin{align*}
\|u(t)\|_{M^\frac{d}{2}(\R^d)} \leq 2\sigma_d\quad\mathrm{for\ all}\ t > 0.
\end{align*}
Furthermore, it was shown in \cite{W2023_2} that, in dimensions $d \ge 10$, if initial data satisfies the asymptotic condition:
\begin{align*}
\liminf_{|x| \to \infty} |x|^2 u_0(x) > 2(d-2),
\end{align*}
which implies that the decay of $u_0$ at infinity is slower than that of the singular stationary solution $u_C$, then the corresponding solution blows up in finite time. Consequently, in supercritical dimensions, the behavior of initial data at spatial infinity is key to the analysis of the long-time dynamics of solutions to \eqref{KS_0}.
In particular, the singular stationary solution $u_C$ appears to act as a critical threshold separating different qualitative behaviors of solutions to \eqref{KS_0}.

\subsection{Statement of the problem}

The aim of the present paper is to provide a detailed description of the behavior of the solutions to \eqref{p} in the supercritical case $d \ge 5$. As noted in the previous subsection, extensive studies on supercritical dimensions have been carried out for the simplified Keller--Segel system \eqref{KS_0}.
These results crucially rely on the fact that, in the parabolic-elliptic setting, the second equation reduces to a Poisson-type equation. A key feature of the Poisson-type structure is that it effectively reduces the system to a single equation for the cell density through the use of a so-called mass function. In fact, once the mass function is defined in this manner $U(r,t) = r^{-d}\int_0^r \rho^{d-1}u(\rho,t) d\rho$, the simplified Keller--Segel system \eqref{KS_0} can be transformed into a single equation governing its evolution as follows:
\begin{align*}
U_t = U_{rr} + \dfrac{d+1}{r}U_r + rU_rU + dU^2.
\end{align*}
At the same time, 
this formulation allows one to make extensive use of comparison principles (see \cite{W2023_2, N2021, BKP2019}).
Moreover, the simplified Keller--Segel system \eqref{KS_0} and the resulting formulation above exhibits a superlinear structure, 
which plays a crucial role in the analysis of aggregation and blow-up phenomena. 

By contrast, the system \eqref{p} remains intrinsically coupled and does not admit a reduction to a single equation nor a superlinear structure. Accordingly, in contrast to the classical setting, the present paper is devoted to clarifying the behavior of the solutions to \eqref{p} in the supercritical case. Since the analytical framework available for the simplified Keller--Segel system is not applicable to \eqref{p}, alternative approaches are required. In this paper, we place particular emphasis on the stationary problem corresponding to \eqref{p}, which we regard as a key step toward understanding the dynamics of solutions to \eqref{p}. The associated stationary system reads
\begin{equation}\label{S0}
\begin{cases}
0 = \Delta u - \nabla \cdot (u\nabla v)\qquad &\mathrm{in}\ \R^d \times (0,\infty),\\
0 = \Delta v + w\qquad &\mathrm{in}\ \R^d \times (0,\infty),\\
0 = \Delta w + u \qquad &\mathrm{in}\ \R^d \times (0,\infty).
\end{cases}
\end{equation}
By combining the second and third equations, this system is equivalently reduced to the following problem:
\begin{equation}\label{S1}
\begin{cases}
0 = \Delta u - \nabla \cdot (u\nabla v)\qquad &\mathrm{in}\ \R^d \times (0,\infty),\\
\Delta^2 v= u \qquad &\mathrm{in}\ \R^d \times (0,\infty).
\end{cases}
\end{equation}
Here focusing on the first equation in \eqref{S1}, one can rewrite it as
$\nabla \cdot (u\nabla (\log u - v)) = 0$ in $\R^d$. Hence, if $u > 0$, the stationary problem formally indicates that $u$ is comparable to $e^v$. 
This heuristic relation will be rigorously justified later (see Proposition~\ref{stationaly sol}).
Consequently we consider a radially symmetric solution of the forth-order elliptic problem:
\begin{equation}\label{S2}
\begin{cases}
\Delta^2 \phi = e^\phi\qquad \mathrm{for}\ r \in [0,\mathrm{R}(\alpha, \beta)),\\
\phi(0) = \log \alpha,\ \Delta \phi(0) = \beta,\ \phi^\prime(0) = (\Delta \phi)^\prime (0) = 0,
\end{cases}
\end{equation}
where $\alpha > 0$ and $\beta \in \R$, and $\mathrm{R}(\alpha, \beta)$ is the maximal interval of existence.
Then it is known from \cite{AGG2006, BFFG2012} that for any $\alpha > 0$ there exists a unique $\beta_0(\alpha) \in [-4de^\frac{\alpha}{2}, 0)$ such that if $\beta = \beta_0(\alpha)$, then $R(\alpha,\beta)=\infty$ and moreover the corresponding unique radial solution
$\phi\in C^4(\R^d)$ satisfies the following asymptotic profile:
\begin{align*}
\lim_{r \to \infty}(\phi(r) + 4\log r) = \log 8(d-4)(d-2).
\end{align*}
Furthermore, thanks to the refined analysis developed in \cite{G2014}, the asymptotic behavior of the radial solution $\phi$ near infinity is known in much greater detail.
In order to clarify the behavior of solutions to \eqref{p} in the supercritical case, we make use of the radially symmetric solution $\phi$ to the stationary problem \eqref{S2} associated with \eqref{p}, characterized by the above asymptotic profile.

\subsection{Notation}
Before stating our main results, we introduce several definitions and remarks
that will be used throughout the paper. Throughout the paper, we work with the homogeneous Morrey spaces $M^p(\R^d)$ of measurable functions on $\R^d$, defined by the norm (see \cite[Section 7.9]{GT1983}):
\begin{align*}
\|f\|_{M^p(\R^d)} := \sup_{x\in \R^d}\sup_{R > 0}R^{d(\frac{1}{p}-1)}
\int_{|x-y| < R} |f(y)| dy < \infty.
\end{align*}
We also set $M^\infty(\R^d) = L^\infty(\R^d)$ and $M^1(\R^d) = L^1(\R^d)$. 
In view of the scaling \eqref{scale}, the Morrey spaces
$M^{\frac{d}{4}}(\R^d)$ and $M^{\frac{d}{2}}(\R^d)$
naturally arise as the critical functional framework
for the analysis of \eqref{p}, corresponding to the variables
$u$ and $w$, respectively. Moreover, for $p\in [1,\infty)$, we define the subspace $\dot{M}^p(\R^d)$ of
$M^p(\R^d)$ by
\begin{align*}
\dot{M}^p(\R^d) := \{ f \in M^p(\R^d); 
\limsup_{R \to 0}R^{d(\frac{1}{p}-1)}\int_{|x-y| < R} |f(y)| dy=0\}
\end{align*}
(see \cite{K1992, B1995, T1992}). This subspace is essential for constructing local-in-time solutions to \eqref{p}
in the critical Morrey spaces $M^\frac{d}{4}(\R^d)$ and $M^\frac{d}{2}(\R^d)$.
Here we basically consider radially symmetric solutions of \eqref{p} in this paper. 
In this case, the norms of the critical Morrey spaces $M^\frac{d}{4}(\R^d)$ and $M^\frac{d}{2}(\R^d)$
are equivalent to
\begin{align*}
\sup_{R > 0}R^{4-d}\int_{|y| < R} |f(y)| dy,\quad \sup_{R > 0}R^{2-d}\int_{|y| < R} |f(y)| dy,
\end{align*}
respectively (see \cite[Proposition 7.1]{BKZ2018}).
Accordingly, in the radial setting considered in this paper, we shall adopt the above expressions as the definitions of the norms on
$M^{\frac{d}{4}}(\R^d)$ and $M^{\frac{d}{2}}(\R^d)$. Moreover, in the radial setting, the Laplacian of a radially symmetric function $f=f(r)$ is given by
\begin{align*}
\Delta f(r) := f_{rr}(r) + \dfrac{d-1}{r}f_r(r).
\end{align*}
As usual, by the Taylor expansion, the value at the origin is defined by $\Delta f(0):= df_{rr}(0)$. Finally, we note that there are several natural ways to treat the second equation in \eqref{p}, which is of Poisson type. One approach is to assume that $v$ is given by the convolution
\begin{align*}
v = E_d * w,
\end{align*}
where $E_d$ denotes the fundamental solution of the Laplacian, given by
\begin{align*}
E_d(x) = \dfrac{1}{(d-2)\sigma_d}\dfrac{1}{|x|^{d-2}}\quad\mathrm{for}\ d\ge 3,
\end{align*}
where $\sigma_d$ is the measure of the unit sphere in $\R^d$. A second approach is to take advantage of radial symmetry and define $v$ by an integral formula:
\begin{align}
v(x,t):= -\int_0^{|x|} s^{1-d}\int_0^s \rho^{d-1}w(\rho,t) d\rho ds + C\quad\mathrm{for}\ (x,t) \in \R^d \times (0,\infty)\label{integral formula}
\end{align}
with some constant $C$. We also denote the radial inverse of the Laplacian
$(-\Delta)^{-1}$ by the following integral form
\begin{align*}
(-\Delta)^{-1}f(x) :=-\int_0^{|x|} s^{1-d}\int_0^s \rho^{d-1}f(\rho) d\rho ds\quad\mathrm{for}\ x \in \R^d \setminus \{0\}
\end{align*}
and extend it to $r=0$ by continuity, so that $(-\Delta)^{-1}f(0)=0$. Throughout this paper, we shall employ these two definitions selectively,
according to the requirements of each theorem.

\subsection{Main results}

Our main results are stated as follows. 

\begin{theorem}[Singular stationary solutions]\label{th:1}
Let $d \ge 7$ and 
\begin{align*}
u_C (x) = \dfrac{8(d-4)(d-2)}{|x|^4},\quad w_C(x) = \dfrac{4(d-2)}{|x|^2},\quad v_C(x) = -4\log |x| + C
\end{align*}
for some constant $C$. Then $(u_C, v_C, w_C)$ is a distributional, stationary solution to the system \eqref{p}.
\end{theorem}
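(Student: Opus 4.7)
The strategy is to verify each of the three stationary equations pointwise on $\R^d\setminus\{0\}$ and then promote each identity to a distributional statement on $\R^d$ via integration by parts on the annulus $A_\epsilon:=\{\epsilon<|x|<R\}$, followed by the limit $\epsilon\to 0$.

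For the pointwise part, using $\Delta f=f_{rr}+\frac{d-1}{r}f_r$ for radial $f$, direct differentiation on $\R^d\setminus\{0\}$ gives $\Delta v_C=-4(d-2)/r^2=-w_C$ and $\Delta w_C=-8(d-4)(d-2)/r^4=-u_C$. For the first equation, the decisive algebraic observation is that
\begin{align*}
u_C\nabla v_C=\frac{8(d-4)(d-2)}{r^4}\cdot\left(-\frac{4}{r}\right)\hat r=-\frac{32(d-4)(d-2)}{r^5}\hat r=\nabla u_C\qquad\text{on }\R^d\setminus\{0\},
\end{align*}
so $\Delta u_C-\nabla\cdot(u_C\nabla v_C)\equiv 0$ there automatically.

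To upgrade to distributions, I fix $\varphi\in C_c^\infty(\R^d)$ with $\mathrm{supp}\,\varphi\subset B_R$ and apply Green's identity on $A_\epsilon$, producing inner spherical boundary integrals at $\{|x|=\epsilon\}$. For the second and third equations these boundary terms involve $v_C,\partial_\nu v_C,w_C,\partial_\nu w_C$ and are controlled by constants times $|\log\epsilon|\,\epsilon^{d-1},\ \epsilon^{d-2},\ \epsilon^{d-3},\ \epsilon^{d-4}$, respectively, with $\nu=-\hat r$ the outward normal on the inner sphere. For the first equation, after substituting $u_C\nabla v_C=\nabla u_C$ the claim collapses to a single integration by parts for $u_C$, and the inner boundary contribution is of the form $\int_{|x|=\epsilon}u_C\,\partial_\nu\varphi\,dS$, which is of order $\epsilon^{d-5}$. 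Under the standing hypothesis $d\ge 7$, every one of these boundary contributions vanishes as $\epsilon\to 0$, and since all integrands are locally integrable for $d\ge 7$, dominated convergence on the bulk integrals delivers the three desired distributional identities.

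The only subtlety is the first equation, because $u_C\sim|x|^{-4}$ and $u_C\nabla v_C\sim|x|^{-5}$ both carry borderline singularities at the origin and one must verify that no hidden delta mass concentrates there. The identity $u_C\nabla v_C=\nabla u_C$ is precisely what allows the singular drift to be replaced by a genuine gradient, reducing the whole matter to a single integration by parts for the profile $u_C\sim|x|^{-4}$; the power count on the sphere of radius $\epsilon$ then makes the tolerable size of the singularity explicit and, in particular, is compatible with the standing hypothesis $d\ge 7$.
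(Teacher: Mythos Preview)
Your proof is correct. The paper's argument is organized differently: it starts from a scaling ansatz $u=A|x|^{-4}$, $w=B|x|^{-2}$, $v=C\log|x|$, computes each equation pointwise to obtain the algebraic relations $A=2(d-4)B$, $B=(2-d)C$, and $-A(4+C)(d-6)=0$, solves for the constants, and then remarks that $d\ge 7$ makes $u_C\nabla v_C\sim r^{-5}$ locally integrable so the first equation can be interpreted distributionally. You instead take the constants as given and verify the equations directly, and your key simplification---the identity $u_C\nabla v_C=\nabla u_C$ (equivalently $u_C\propto e^{v_C}$)---reduces the first equation to the tautology $\Delta u_C-\nabla\cdot\nabla u_C=0$, so that only a single integration by parts with an $O(\epsilon^{d-5})$ inner boundary term is needed. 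This is cleaner than the paper's treatment of the drift term and makes the role of the dimensional threshold more transparent; the paper's approach, on the other hand, explains \emph{where} the constants come from rather than merely checking them.
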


\begin{remark}
This stands in sharp contrast to the simplified Keller--Segel system \eqref{KS_0}, whose singular stationary state is generated by the Chandrasekhar steady state singular solution $|x|^{-2}$. Hence, our system admits a fundamentally different type of singular structure, that the dynamics of our system may display novel features that cannot appear in the theory of the classical Keller--Segel systems. 
\end{remark}

Let us present the result on the global-in-time solutions to the system \eqref{p}.
\begin{theorem}\label{th:2}
Let $d \ge 5$ and let $p \in (\frac{d}{3}, \frac{d}{2})$ be close to $\frac{d}{3}$.
Assume the radially symmetric nonnegative initial data $(u_0,w_0) \in (M^{\frac{d}{4}}(\R^d) \cap \dot{M}^p(\R^d)) \times (M^\frac{d}{2}(\R^d) \cap M^\frac{dp}{d-2p}(\R^d))$ satisfying 
\begin{align}
\|u_0\|_{M^\frac{d}{4}(\R^d)} &:= \sup_{ R > 0} R^{4-d}\int_{|x| < R} u_0 (x) dx < 6(d-2)\sigma_d,\label{th:2,asup1}\\
\|w_0\|_{M^\frac{d}{2}(\R^d)} &:= \sup_{ R > 0} R^{2-d}\int_{|x| < R} w_0 (x) dx < 3 \sigma_d.\label{th:2,asup2}
\end{align}
Then the corresponding solution $(u,v,w)$ 
to the problem \eqref{p} exists globally in time in the sense 
\begin{align*}
C_w([0,\infty); M^{\frac{d}{4}}(\R^d) \cap M^p(\R^d)) \cap \{u: (0, \infty) \to M^r(\R^d); \sup_{t > 0} t^{\frac{d}{2}(\frac{1}{p}-\frac{1}{r})}\|u(t)\|_{M^r(\R^d)} < \infty\}\\
\times C_w([0,\infty); M^\frac{d}{2}(\R^d) \cap M^\frac{dp}{d-2p}(\R^d)),
\end{align*}
where $p < r < \frac{dp}{d-p}$,
such that writing $v(\cdot, t) = E_d * w(\cdot, t)$ for all $t \in (0,\infty)$,
and moreover satisfies the upper bound
\begin{align}\label{th:2:upper}
\|u(t)\|_{M^\frac{d}{4}(\R^d)} < 6(d-2)\sigma_d,\quad \|w(t)\|_{M^\frac{d}{2}(\R^d)} < 3\sigma_d\quad \mathrm{for\ all}\ t > 0.
\end{align}
\end{theorem}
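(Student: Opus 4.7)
My plan has three stages: local existence in the scaling-critical Morrey setting, an a priori Morrey-norm bound, and global continuation. For the local theory I would run a Kato-type fixed-point iteration on the mild formulation
\begin{align*}
u(t) = e^{t\Delta}u_0 - \int_0^t \nabla\cdot e^{(t-s)\Delta}\bigl(u\,\nabla v\bigr)(s)\,ds,\qquad w(t) = e^{t\Delta}w_0 + \int_0^t e^{(t-s)\Delta}u(s)\,ds,
\end{align*}
with $v = E_d * w$, set in a time-weighted mixed Morrey norm built from the spaces appearing in the statement. The tools are the scale-invariant Morrey mapping properties of $e^{t\Delta}$, the Riesz-potential estimate $\|\nabla v\|_{M^{d}(\R^d)}\lesssim\|w\|_{M^{d/2}(\R^d)}$, and a pointwise bilinear rule $\|u\,\nabla v\|_{M^{s}(\R^d)}\lesssim\|u\|_{M^{r}(\R^d)}\|\nabla v\|_{M^{d}(\R^d)}$; the subspace $\dot M^p(\R^d)$ with $p$ close to $d/3$ supplies the Kato decay profile $\|u(t)\|_{M^{r}(\R^d)}\lesssim t^{-\frac{d}{2}(\frac{1}{p}-\frac{1}{r})}$ and strong continuity of $u(\cdot,t)$ as $t\downarrow 0$ needed to close the iteration.

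The heart of the argument is Stage~2. In the radial setting, assumptions \eqref{th:2,asup1} and \eqref{th:2,asup2} are equivalent to the strict inequalities
\begin{align*}
M_u(r,0)<6(d-2)r^{d-4},\qquad M_w(r,0)<3r^{d-2}\qquad\text{for all }r>0,
\end{align*}
where $M_u(r,t):=\int_0^r\rho^{d-1}u(\rho,t)\,d\rho$ and $M_w(r,t):=\int_0^r\rho^{d-1}w(\rho,t)\,d\rho$, and \eqref{th:2:upper} asserts the same bounds at every later time. Integrating the equations of \eqref{p} over concentric balls and using the radial Poisson identity $-v_r=r^{1-d}M_w$ yields the coupled, quasi-monotone parabolic system
\begin{align*}
\partial_t M_u = \partial_r^2 M_u - \tfrac{d-1}{r}\partial_r M_u + r^{1-d}(\partial_r M_u)\,M_w,\qquad \partial_t M_w = \partial_r^2 M_w - \tfrac{d-1}{r}\partial_r M_w + M_u.
\end{align*}
I would then take the stationary pair $(\overline M_u,\overline M_w)(r):=(6(d-2)r^{d-4},\,3r^{d-2})$ as a barrier. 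Direct substitution reduces the supersolution inequalities to the algebraic conditions $M_w\le 4r^{d-2}$ for the first equation and $M_u\le 6(d-2)r^{d-4}$ for the second; at a hypothetical first touch $(r_*,t_*)$ the two are either simultaneous, or the second inequality forces $M_u(r_*,t_*)\ge\overline M_u(r_*)$ and so the touch becomes simultaneous anyway. In either case the bound $M_w(r_*,t_*)\le 3r_*^{d-2}<4r_*^{d-2}$ turns the first inequality into a strict one, forcing $\partial_t M_u<0$ at the touch point and contradicting the first-touch sign $\partial_t M_u\ge 0$. The gap between the threshold $3\sigma_d$ and the Morrey norm $4\sigma_d$ of the singular stationary state $w_C$ from Theorem~\ref{th:1} is exactly the slack that drives this strict comparison.

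Stage~3 is routine: Stage~1 comes with a blow-up criterion phrased in the critical Morrey norms, and Stage~2 keeps those norms uniformly strictly below the thresholds used in the contraction estimate, so the local solution extends to all $t>0$ and automatically satisfies \eqref{th:2:upper}. The principal obstacle is Stage~2, because the coupled mass-function system does not reduce to a single scalar PDE as it does in the parabolic–elliptic Keller–Segel setting treated in \eqref{KS_0}; the barrier must therefore be constructed vectorially, the first-touch analysis must handle both components simultaneously with careful tracking of the cross term $r^{1-d}(\partial_r M_u)M_w$, and the formal touch-point argument has to be made rigorous on the unbounded domain, e.g.\ by localizing on large balls and controlling the behavior near $r=0$ and as $r\to\infty$ before passing to the limit.
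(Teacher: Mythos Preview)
Your three-stage plan matches the paper's architecture, and your touch-point logic for the barrier $(\overline M_u,\overline M_w)=(6(d-2)r^{d-4},3r^{d-2})$ is essentially the paper's ``Case~1'' computation. But the single power-law barrier is not enough, and this is a genuine gap, not just a technicality you can patch by localizing.

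The paper's barrier is the \emph{minimum} of two powers,
\[
b_0(r)=\min\{Kr^{d-d/p},\,8(d-2)\sigma_d\varepsilon\,r^{d-4}\},\qquad b_1(r)=\min\{Kr^{d-d/p+2},\,4\sigma_d\varepsilon'\,r^{d-2}\},
\]
with $\varepsilon<\varepsilon'<d/(4p)$ and $\varepsilon'\le 2(d-2)\varepsilon$. The steeper branch $Kr^{d-d/p}$ is what propagates the subcritical $M^p$ bound on $u$ and the $M^{dp/(d-2p)}$ bound on $w$ forward in time. You need these: the local theory from Stage~1 is set up in $M^{d/4}\cap M^p$ and $M^{d/2}\cap M^{dp/(d-2p)}$, and the continuation criterion involves all four norms (plus the time-weighted $M^r$ norm, recovered afterwards by a singular Gronwall argument). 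Your Stage~2 only bounds the two critical norms, so your Stage~3 cannot close. Moreover, the small-$r$ branch is exactly how the paper rules out a touch escaping to $r=0$: if the touch happens where $b_0(r)=Kr^{d-d/p}$, one computes (Case~3) that the rescaled quantity satisfies $(d-\tfrac{d}{p}+2)(2-\tfrac{d}{p})+1<0$, which is where the hypothesis ``$p$ close to $d/3$'' is actually used---something your sketch never invokes.

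There is also a smaller issue in your simultaneous-touch argument. With $\overline M_u=6(d-2)r^{d-4}$ and $\overline M_w=3r^{d-2}$, the $M_w$-supersolution condition reads $M_u\le 6(d-2)r^{d-4}$ with \emph{equality} at the forced simultaneous touch, so the $M_w$-equation gives no strict sign. You correctly route the contradiction through the $M_u$-equation, where $M_w\le 3r^{d-2}<4r^{d-2}$ is strict; the paper gets the same strictness by taking $\varepsilon<\varepsilon'$ and $\varepsilon'<1$, which is why the thresholds are $6(d-2)\sigma_d$ and $3\sigma_d$ rather than the ``natural'' $8(d-2)\sigma_d$ and $4\sigma_d$. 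That part of your reasoning is fine, but it only handles the large-$r$ regime.
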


\begin{remark}
The straightforwrd computation implies
\begin{align*}
\|u_C\|_{M^\frac{d}{4}(\R^d)} = 8(d-2)\sigma_d,\quad \|w_C\|_{M^\frac{d}{2}(\R^d)} = 4\sigma_d.
\end{align*}
Therefore, in view of the result \cite{BKP2019} known for the simplified Keller--Segel system \eqref{KS_0}, the conclusion of
Theorem~\ref{th:2} can be regarded as weaker than the expected optimal statement for \eqref{p},
which would assert global-in-time existence together with the same bounds under natural
assumptions. More precisely, we expect under the assumption
\begin{align}
\|u_0\|_{M^\frac{d}{4}(\R^d)} < \|u_C\|_{M^\frac{d}{4}(\R^d)} = 8(d-2)\sigma_d,\quad \|w_0\|_{M^\frac{d}{2}(\R^d)} < \|w_C\|_{M^\frac{d}{2}(\R^d)} = 4\sigma_d,\label{singular:ineq}
\end{align}
the corresponding solution to \eqref{p}, defined in an appropriate functional setting, exists globally in time and enjoys the same bounds. Hence Theorem~\ref{th:2} does not yet reach this level and is weaker in this respect. Nevertheless, such a result in the supercritical case for an essentially
parabolic-parabolic system \eqref{p} is completely new.
\end{remark}

Focusing on the radial solution $\phi$ of \eqref{S2}, we construct both
global-in-time solutions and finite time blow-up of solutions.

\begin{theorem}\label{th:3}
Let $\phi \in C^4(\R^d)$ be the radial solution of \eqref{S2} with $\alpha > 0$ and $\beta_0(\alpha)$ and assume the radially symmetric nonnegative initial data $(u_0,w_0) \in (C(\R^d)\cap L^\infty(\R^d))^2$. Then the following three assertions hold.
\begin{enumerate}
\item[\rm{(i)}] Assume $ 0 \leq u_0 \leq \lambda e^{\phi}$ and $0 \leq w_0 \leq \lambda (-\Delta)\phi$ for all $x \in \R^d$ and with some $\lambda \in (0,1]$. Then the solution $(u,v,w)$ of \eqref{p} exists globally in time and satisfies the following estimates:
\begin{align}\label{th3:bound}
\int_{|x| < R} u(x,t) dx \leq \lambda \int_{|x| < R} e^{\phi(x)} dx,\quad \int_{|x| < R} w(x,t) dx \leq \lambda \int_{|x| < R} (-\Delta)\phi(x) dx
\end{align}
for all $(R,t) \in (0,\infty) \times [0,\infty)$.
\item[\rm{(ii)}] Assume $ \lambda e^{\phi} \leq u_0 $ and $\lambda (-\Delta)\phi \leq w_0$ for all $x \in \R^d$ and with some $\lambda > 1$. Then the solution $(u,v,w)$ of \eqref{p} blows up in finite time, in the sense that
\begin{align*}
\limsup_{t \to T_\mathrm{max}}(\|u(t)\|_{L^\infty(\R^d)} + \|w(t)\|_{L^\infty(\R^d)}) = \infty,
\end{align*}
where $T_\mathrm{max}$ denotes the maximal existence time of the solution $(u,v,w)$ and is finite.
\item[\rm{(iii)}] Assume $ \lambda_1 e^{\phi} \leq u_0 $ and $\lambda_2 (-\Delta)\phi \leq w_0$ for all $x \in \R^d$ and with some $\lambda_1 > \lambda_2 > 1$. Then the solution $(u,v,w)$ of \eqref{p} blows up in finite time.
\end{enumerate}
\end{theorem}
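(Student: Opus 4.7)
My plan is to reformulate the problem in terms of radial mass functions and then apply a two-sided comparison principle against the stationary pair $(e^{\phi},-\Delta\phi)$; this single framework handles all three parts.

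First I introduce
\begin{align*}
U(r,t):=\int_{|x|<r} u(x,t)\,dx,\qquad W(r,t):=\int_{|x|<r} w(x,t)\,dx,
\end{align*}
together with their stationary counterparts $\tilde U(r):=\int_{|x|<r} e^{\phi}\,dx$ and $\tilde W(r):=\int_{|x|<r}(-\Delta)\phi\,dx$. Using radial symmetry to rewrite the Poisson equation as $v_r=-W/(\sigma_d r^{d-1})$, a direct computation shows that $(U,W)$ solves the cooperative parabolic system
\begin{align*}
U_t=U_{rr}-\tfrac{d-1}{r}U_r+\tfrac{U_r W}{\sigma_d r^{d-1}},\qquad W_t=W_{rr}-\tfrac{d-1}{r}W_r+U,
\end{align*}
with $U(0,t)=W(0,t)=0$, while the identity $\Delta^{2}\phi=e^{\phi}$ forces $(\tilde U,\tilde W)$ to solve the stationary version. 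Substituting $(\lambda\tilde U,\lambda\tilde W)$ leaves the $W$-equation an equality and produces in the $U$-equation the residual $\lambda(1-\lambda)\tilde U_r\tilde W/(\sigma_d r^{d-1})$: this is nonnegative and makes the pair a supersolution when $\lambda\in(0,1]$, and nonpositive and makes it a subsolution when $\lambda\ge 1$.

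The next step is a comparison principle for this cooperative system, with the Dirichlet condition $U=W=0$ at $r=0$ and appropriate growth at infinity. Since the right-hand side of the $U$-equation is monotone in $W$ (because $U_r\ge 0$) and that of the $W$-equation is trivially monotone in $U$, a first-touching-point argument — with a $\lambda\mapsto\lambda\pm\varepsilon$ perturbation to rule out the degenerate equality case — yields the desired two-sided ordering. For part~(i), applied with $\lambda\in(0,1]$, this gives $U(r,t)\le\lambda\tilde U(r)$ and $W(r,t)\le\lambda\tilde W(r)$ for all $(r,t)$, which is exactly \eqref{th3:bound}. Combined with the sharp asymptotics of $\phi$ at infinity from \cite{G2014}, these bounds give uniform control of $\|u(t)\|_{M^\frac{d}{4}(\R^d)}$ and $\|w(t)\|_{M^\frac{d}{2}(\R^d)}$, and global-in-time existence then follows from local well-posedness plus a standard continuation criterion.

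Part~(ii) is the hard step. Reversing the comparison with $\lambda>1$ produces $U(r,t)\ge\lambda\tilde U(r)$ and $W(r,t)\ge\lambda\tilde W(r)$ on the maximal existence interval, but this alone does not imply blow-up because $(\tilde U,\tilde W)$ is itself global. The main obstacle is therefore to convert this lower ordering into a finite-time singularity, extracting it from the strictly positive source $\lambda(\lambda-1)\tilde U_r\tilde W/(\sigma_d r^{d-1})$ that now feeds $F:=U-\lambda\tilde U$, which in turn drives $G:=W-\lambda\tilde W$ through the cooperative coupling $G_t=\mathcal{L}G+F$. I would attack this by constructing a blowing-up subsolution of the mass system — for instance via a backward self-similar ansatz compatible with the scaling \eqref{scale} — that is pointwise dominated by the given initial data, so that comparison forces $\|u(t)\|_{L^\infty(\R^d)}+\|w(t)\|_{L^\infty(\R^d)}$ to diverge at some finite $T_\mathrm{max}$. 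Part~(iii) is then immediate from (ii): the hypothesis $\lambda_1>\lambda_2>1$ gives $u_0\ge\lambda_2 e^{\phi}$ and $w_0\ge\lambda_2(-\Delta)\phi$, so applying part~(ii) with $\lambda=\lambda_2$ produces the claimed finite-time blow-up.
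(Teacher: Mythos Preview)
Your framework for part~(i) is essentially the paper's: both reduce to a comparison principle for the mass-function system with $(\lambda e^\phi,\lambda(-\Delta)\phi)$ (equivalently $(\lambda\tilde U,\lambda\tilde W)$) serving as a time-independent supersolution when $\lambda\le 1$, and your residual computation $\lambda(1-\lambda)\tilde U_r\tilde W/(\sigma_d r^{d-1})$ is exactly the content of Proposition~\ref{sub and super}. Your reduction of (iii) to (ii) via $\lambda=\lambda_2$ is also fine and is what the paper does.

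The genuine gap is in part~(ii). You correctly observe that the reverse ordering $U\ge\lambda\tilde U$, $W\ge\lambda\tilde W$ does not by itself force blow-up, and you propose to slide a backward-self-similar blowing-up subsolution under the initial data. The obstacle is that for $\lambda$ close to $1$ the data lie only \emph{slightly} above the stationary profile: near the origin $u_0(0)\ge\lambda\alpha$ is just an $O(1)$ number, and at infinity the mass tail is $8(d-2)\lambda r^{-4}$. The paper does build an explicit blowing-up subsolution (Proposition~\ref{finite}), but its coefficients must satisfy $A/B>\tfrac12\bigl(2d-3+\sqrt{(2d-3)^2+32}\bigr)$ with $B>4$, which forces the tail $A/r^4$ to exceed the available $8(d-2)\lambda/r^4$ unless $\lambda$ is bounded well away from $1$. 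That construction is therefore used only \emph{after} one already knows the mass has become large (Proposition~\ref{blowup point}), not as a direct mechanism from $t=0$.

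The paper's actual argument for (ii) takes a different route that you are missing. One first replaces $(u_0,w_0)$ by $(\lambda e^\phi,\lambda(-\Delta)\phi)$ via comparison, so it suffices to blow up this special solution $(u_\lambda,w_\lambda)$. Because these data satisfy \eqref{super:1}, the mass functions $(M_\lambda,W_\lambda)$ are \emph{monotone nondecreasing} in $t$ (Proposition~\ref{CP_3}). Assuming $T_\lambda=\infty$, one passes to a pointwise limit $(M_{\lambda,\infty},W_{\lambda,\infty})$ solving the stationary mass system (Proposition~\ref{convergence}), reconstructs from it a stationary triple $(u_{\lambda,\infty},v_{\lambda,\infty},w_{\lambda,\infty})$ of \eqref{S0} (Proposition~\ref{u,v,w}), and observes that the first equation forces $u_{\lambda,\infty}=Ce^{v_{\lambda,\infty}}$. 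Proposition~\ref{limit}, built on the asymptotics of the biharmonic Gel'fand problem \cite{AGG2006,G2014}, then shows any such solution with $\liminf_{r\to\infty}(v_{\lambda,\infty}+4\log r)\neq-\infty$ must satisfy $\liminf_{r\to\infty}r^4 u_{\lambda,\infty}\le 8(d-4)(d-2)$. But monotonicity in $t$ gives $M_{\lambda,\infty}\ge M_{\lambda,0}$, and L'H\^opital with the known tail of $e^\phi$ yields $\lim_{r\to\infty}r^4 u_{\lambda,\infty}\ge 8(d-4)(d-2)\lambda>8(d-4)(d-2)$; combined with $u_{\lambda,\infty}=Ce^{v_{\lambda,\infty}}$ this is a contradiction. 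The crucial idea your proposal lacks is this \emph{rigidity of the stationary problem}: every entire radial solution of $\Delta^2 v=e^v$ has the same tail, so a monotone-in-time limit sitting strictly above it cannot exist.
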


\begin{remark}
The local-in-time existence of a solution to \eqref{p} corresponding to the initial data
$(u_0,w_0)\in (C(\R^d)\cap L^\infty(\R^d))^2$ is guaranteed by
Proposition~\ref{local_2}. Especially the function $v$ is denoted by the integral formula \eqref{integral formula}.
\end{remark}

\begin{remark}
For the radially symmetric solution $\phi$ of the elliptic problem \eqref{S2}
associated with the parameters $\alpha$ and $\beta_0(\alpha)$,
the functions $e^{\phi}$ and $(-\Delta)\phi$ are asymptotic to the singular stationary
solutions $u_C$ and $w_C$, respectively, as $|x|\to\infty$
(see \cite{G2014, AGG2006, BFFG2012} and Section~\ref{chap:sta}). Consequently Theorem~\ref{th:3} highlights that the long-time behavior in supercritical dimensions of solutions to \eqref{p} 
is governed by whether the initial data $(u_0,w_0)$ lies above or below these singular profiles at infinity.
\end{remark}

Thanks to Theorem~\ref{th:3}, we obtain the following result. The following theorems yield an essentially complete classification for the behavior of the solutions to \eqref{p} in supercritical dimensions.

\begin{theorem}\label{th:3.5}
Let $\phi \in C^4(\R^d)$ be the radial solution of \eqref{S2} with $\alpha > 0$ and $\beta_0(\alpha)$ and assume the radially symmetric nonnegative initial data $(u_0,w_0) \in (C(\R^d)\cap L^\infty(\R^d))^2$. Then the following two assertions hold.
\begin{enumerate}
\item[\rm{(i)}]  If $d \ge 13$, whenever the initial data $(u_0,w_0)$ satisfies
\begin{align*}
0 \leq u_0 \leq e^{\phi},\quad 0 \leq w_0 \leq (-\Delta)\phi\quad \mathrm{for\ all}\  x \in \R^d,
\end{align*}
the corresponding solution $(u,v,w)$ of \eqref{p} exists globally in time and satisfies the following estimates:
\begin{align}\label{th3.5:bound}
\sup_{t > 0}\|u(t)\|_{M^\frac{d}{4}(\R^d)} \leq 8(d-2)\sigma_d,\quad \sup_{t > 0}\|w(t)\|_{M^\frac{d}{2}(\R^d)} \leq 4\sigma_d.
\end{align}
\item[\rm{(ii)}] If $5 \leq d \leq 12$, then there exists $\lambda \in (0,1)$ such that, 
whenever the initial data $(u_0,w_0)$ satisfies
\begin{align*}
0 \leq u_0 \leq \lambda e^{\phi},\quad 0 \leq w_0 \leq \lambda (-\Delta)\phi\quad\mathrm{for\ all} \ x \in \R^d,
\end{align*}
the corresponding solution $(u,v,w)$ of \eqref{p} exists globally in time and enjoys the upper bounds \eqref{th3.5:bound}.
\end{enumerate}
\end{theorem}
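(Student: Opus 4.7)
The plan is to use Theorem~\ref{th:3}(i) as the main engine and convert its integral bounds \eqref{th3:bound} into the critical Morrey bounds \eqref{th3.5:bound} by comparing the stationary profiles $e^{\phi}$ and $(-\Delta)\phi$ with the explicit singular solutions $u_C$, $w_C$ of Theorem~\ref{th:1}. A direct radial computation gives
\begin{equation*}
R^{4-d}\int_{|x|<R}u_C\,dx = 8(d-2)\sigma_d,\qquad R^{2-d}\int_{|x|<R}w_C\,dx = 4\sigma_d\qquad\text{for every }R>0,
\end{equation*}
so the right-hand sides of \eqref{th3.5:bound} are precisely $\|u_C\|_{M^{d/4}(\R^d)}$ and $\|w_C\|_{M^{d/2}(\R^d)}$. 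It therefore suffices to show that the choice of $\lambda$ produces the Morrey estimates $\lambda\|e^\phi\|_{M^{d/4}(\R^d)}\le\|u_C\|_{M^{d/4}(\R^d)}$ and $\lambda\|(-\Delta)\phi\|_{M^{d/2}(\R^d)}\le\|w_C\|_{M^{d/2}(\R^d)}$, after which \eqref{th3:bound} closes the argument.

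For case (i) with $d\ge 13$, I would establish the pointwise dominations
\begin{equation*}
e^{\phi(x)}\le u_C(x)\quad\text{and}\quad (-\Delta)\phi(x)\le w_C(x)\qquad\text{for every }x\in\R^d\setminus\{0\}.
\end{equation*}
These are the biharmonic analogues of the Joseph--Lundgren phenomenon: beyond the dimensional threshold $d=12$, the linearization of $\Delta^2\psi=e^\psi$ around the singular profile $\phi_{\mathrm{sing}}=\log u_C$ becomes stable, and the refined asymptotic analysis carried out in \cite{G2014, AGG2006, BFFG2012} then forces the regular radial solution $\phi$ of \eqref{S2} to lie below $\phi_{\mathrm{sing}}$ pointwise. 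The second inequality is obtained by writing $W:=(-\Delta)\phi$ and rephrasing \eqref{S2} as the second-order system $-\Delta\phi=W$, $-\Delta W=e^\phi$, then running a radial comparison for $W$ using $e^\phi\le u_C$ together with the common far-field asymptotics $W(r)\sim w_C(r)$. With these bounds in hand, Theorem~\ref{th:3}(i) applied with $\lambda=1$ yields \eqref{th3.5:bound}.

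For case (ii) with $5\le d\le 12$, the regular solution $\phi$ oscillates around $\phi_{\mathrm{sing}}$, so the pointwise comparisons fail; nevertheless $e^\phi$ is continuous, bounded at the origin, and decays like $8(d-4)(d-2)|x|^{-4}$ at infinity, so $\|e^\phi\|_{M^{d/4}(\R^d)}<\infty$, and analogously $\|(-\Delta)\phi\|_{M^{d/2}(\R^d)}<\infty$. I would then set
\begin{equation*}
\lambda := \min\!\left\{\frac{8(d-2)\sigma_d}{\|e^\phi\|_{M^{d/4}(\R^d)}},\ \frac{4\sigma_d}{\|(-\Delta)\phi\|_{M^{d/2}(\R^d)}}\right\}\in(0,1),
\end{equation*}
strictly less than $1$ because in this dimension range the oscillatory overshoot forces at least one denominator to exceed its numerator, and then feed this $\lambda$ into Theorem~\ref{th:3}(i) to obtain \eqref{th3.5:bound}. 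The main obstacle is the second pointwise bound $(-\Delta)\phi\le w_C$ in case (i): unlike $e^\phi\le u_C$, which follows quickly from the fourth-order Gelfand theory, the Laplacian bound is not a direct consequence of $\phi\le\phi_{\mathrm{sing}}$ and will require either a delicate radial comparison for the coupled system $(\phi,W)$, or a fallback argument using Gauss's theorem $\int_{|x|<R}(-\Delta)\phi\,dx=-\sigma_d R^{d-1}\phi'(R)$ combined with direct control of $-\phi'(R)$ through the radial ODE.
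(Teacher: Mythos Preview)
Your proposal is correct and follows essentially the same route as the paper: both arguments feed Theorem~\ref{th:3}(i) into the Morrey framework by computing $\|e^\phi\|_{M^{d/4}}$ and $\|(-\Delta)\phi\|_{M^{d/2}}$, splitting on whether $d\ge 13$ or $5\le d\le 12$. The paper organizes the Morrey computations into Lemmas~\ref{d13} and~\ref{d5~12}, but the content matches your sketch---in particular, for $d\ge 13$ the paper also uses the pointwise bounds $e^\phi<u_C$ (from the non-intersection result in \cite{BFFG2012}) and $(-\Delta)\phi<w_C$, and for $5\le d\le 12$ it exploits the oscillatory expansion of $\phi$ from \cite{G2014} to show both Morrey norms strictly exceed the critical values, which forces $\lambda<1$ exactly as you argue.

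Your only hesitation is unnecessary: the ``main obstacle'' $(-\Delta)\phi(r)<4(d-2)r^{-2}$ for $d\ge 13$ is not something you need to prove from scratch via a coupled radial comparison or the Gauss-theorem fallback---it is already established in \cite[Theorem~1.1]{G2014}, which the paper simply cites. With that reference in hand, case~(i) closes immediately.
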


\begin{remark}
In the case $d\ge13$, the critical Morrey norms of $e^{\phi}$ and $(-\Delta)\phi$
coincide with the critical values, namely,
\begin{align*}
\|e^\phi\|_{M^\frac{d}{4}(\R^d)} = 8(d-2)\sigma_d,\quad \|(-\Delta)\phi\|_{M^\frac{d}{2}(\R^d)} =4\sigma_d
\end{align*}
as proved in Lemma~\ref{d13}. Hence the assumption in Theorem~\ref{th:3.5} (i) is sufficient to ensure that
\begin{align}
\|u_0\|_{M^\frac{d}{4}(\R^d)} \leq 8(d-2)\sigma_d,\quad \|w_0\|_{M^\frac{d}{2}(\R^d)} \leq 4\sigma_d.\label{extend;ineq}
\end{align}
Moreover, when $5\le d\le12$, the above condition can still be achieved
by choosing a suitable value of $\lambda$. Therefore this observation suggests that the initial assumptions \eqref{th:2,asup1} and \eqref{th:2,asup2} in Theorem~\ref{th:2} might be extendable to the condition \eqref{extend;ineq} which includes the limiting case in \eqref{singular:ineq}.
\end{remark}

Finally, we present the following result concerning large initial data.

\newpage
\begin{theorem}\label{th:4}
The following two statements are valid.
\begin{enumerate}
\item[\rm{(i)}] Let $d \ge 13$ and $K_0 > 8(d-2)\sigma_d$, and $K_1 > 4\sigma_d$. Then there exists $(u_0,w_0) \in (C(\R^d)\cap L^\infty(\R^d))^2$ such that
\begin{align*}
\|u_0\|_{M^\frac{d}{4}(\R^d)} =K_0,\quad
\|w_0\|_{M^\frac{d}{2}(\R^d)} =K_1
\end{align*}
and the corresponding solution blows up in finite time.
\item[\rm{(ii)}] Let $5 \leq d \leq 12$. Then there exists $(u_0,w_0) \in (C(\R^d)\cap L^\infty(\R^d))^2$ such that
\begin{align*}
\|u_0\|_{M^\frac{d}{4}(\R^d)} > 8(d-2)\sigma_d,\quad
\|w_0\|_{M^\frac{d}{2}(\R^d)} > 4 \sigma_d
\end{align*}
and the corresponding solution exists globally in time.
\end{enumerate}
\end{theorem}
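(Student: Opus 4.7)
The plan is to deduce both parts of Theorem~\ref{th:4} from Theorem~\ref{th:3} by exploiting the precise critical Morrey norms of the radial profile $\phi$ of \eqref{S2}. The key input is the dimensional dichotomy
\begin{align*}
\|e^{\phi}\|_{M^{d/4}(\R^d)}=8(d-2)\sigma_d,\quad \|(-\Delta)\phi\|_{M^{d/2}(\R^d)}=4\sigma_d\qquad \mathrm{for}\ d\ge 13,
\end{align*}
which is Lemma~\ref{d13}, together with the corresponding strict inequalities
\begin{align*}
\|e^{\phi}\|_{M^{d/4}(\R^d)}>8(d-2)\sigma_d,\quad \|(-\Delta)\phi\|_{M^{d/2}(\R^d)}>4\sigma_d\qquad \mathrm{for}\ 5\le d\le 12,
\end{align*}
which I would establish beforehand from the oscillatory convergence of $\phi(r)+4\log r$ to $\log 8(d-4)(d-2)$ in low dimensions: there $\phi$ crosses the singular profile $\phi_s(r)=-4\log r+\log 8(d-4)(d-2)$ infinitely often, so the spherical averages defining the Morrey norms overshoot their asymptotic values $u_C$ and $w_C$ at some finite radius $R_\star$.

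For part (i), given $K_0>8(d-2)\sigma_d$ and $K_1>4\sigma_d$, I would set $\lambda_1:=K_0/(8(d-2)\sigma_d)$ and $\lambda_2:=K_1/(4\sigma_d)$, both strictly larger than $1$, and define $u_0:=\lambda_1 e^{\phi}$ and $w_0:=\lambda_2(-\Delta)\phi$. By the equalities above, the critical Morrey norms are exactly $K_0$ and $K_1$, while $u_0$ and $w_0$ are radial, nonnegative, continuous and bounded, since $\phi\in C^4(\R^d)$ and both profiles decay at infinity. Setting $\lambda:=\min\{\lambda_1,\lambda_2\}>1$, we obtain $u_0\ge \lambda e^{\phi}$ and $w_0\ge \lambda(-\Delta)\phi$, so Theorem~\ref{th:3}(ii) delivers finite time blow-up. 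For part (ii), the choice $u_0:=e^{\phi}$ and $w_0:=(-\Delta)\phi$ gives admissible data of the same type whose Morrey norms strictly exceed the critical thresholds, by the low-dimensional strict inequalities above; on the other hand, Theorem~\ref{th:3}(i) applies with $\lambda=1$ and yields global-in-time existence.

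The main obstacle I anticipate is the low-dimensional overshoot in the regime $5\le d\le 12$, namely the strict Morrey inequalities displayed above. This requires a quantitative oscillation analysis of $\phi$ around the singular profile $\phi_s$: the linearisation of $\Delta^2\phi=e^{\phi}$ at $\phi_s$ is governed by $\Delta^2-8(d-4)(d-2)|x|^{-4}$, whose indicial roots turn out to be complex precisely when $5\le d\le 12$ and real when $d\ge 13$. This biharmonic analog of the Joseph--Lundgren threshold, analysed in \cite{G2014,AGG2006,BFFG2012}, should produce oscillatory decaying modes and hence infinitely many crossings of $\phi_s$, from which the strict Morrey overshoot at the first such crossing radius follows by direct integration of the radial integrand on $(0,R_\star)$.
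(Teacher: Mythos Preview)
Your proposal is correct and follows essentially the same route as the paper: both parts are obtained by scaling the stationary profile $(e^{\phi},(-\Delta)\phi)$ and invoking Theorem~\ref{th:3}, with the dimensional dichotomy for the Morrey norms supplied by Lemma~\ref{d13} (for $d\ge 13$) and by the oscillation lemma you sketch (the paper's Lemma~\ref{d5~12}, proved exactly via the complex indicial roots you mention). One minor point worth noting: for part~(i) the paper takes a single $\lambda>1$ and sets $(u_0,w_0)=(\lambda e^{\phi},\lambda(-\Delta)\phi)$, which only realises pairs $(K_0,K_1)$ with $K_0/K_1=2(d-2)$; your choice of independent factors $\lambda_1,\lambda_2$ together with $\lambda=\min\{\lambda_1,\lambda_2\}>1$ in Theorem~\ref{th:3}(ii) is what actually hits an arbitrary prescribed pair $(K_0,K_1)$ as the statement demands.
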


\begin{remark}
Theorem~\ref{th:4} shows that, for $d\ge13$, the values
$8(d-2)\sigma_d$ and $4\sigma_d$ serve as critical thresholds in the corresponding
Morrey spaces, across which the long-time behavior of solutions to \eqref{p} changes. In this sense, these values are optimal. On the other hand, when $5\leq  d \leq 12$, the situation is different in this respect.
Therefore, it appears highly interesting to investigate the long-time behavior of
solutions in this dimensional range.
\end{remark}

\begin{remark}
For the essentially parabolic-parabolic chemotaxis system \eqref{p}, Theorem~\ref{th:3.5} and Theorem~\ref{th:4}, which yield a sharp criterion and a complete classification of global existence and finite time blow-up of solutions, have not been previously available. Therefore, our approach and results of the present paper are expected to be of independent interest.
\end{remark}

\subsection{Strategy and plan of the paper}

The main strategy and the plan of this paper are summarized as follows. 
A central difficulty in the analysis of system \eqref{p} lies in the fact that,
unlike the classical parabolic-elliptic Keller--Segel system \eqref{KS_0}, it does not reduce to a single equation nor possess an explicit superlinear structure.
As a consequence, many techniques developed for the Keller--Segel system cannot be applied directly. 

To overcome this difficulty, we introduce and exploit new comparison principles for the radial version of the original system \eqref{p}, together with suitably defined mass functions \eqref{mass:1} and \eqref{mass:2} associated with \eqref{p}.
These comparison principles, developed in Section~\ref{chap:2}, form a key analytical tool.  
More precisely, the comparison principle established for the original system
\eqref{p} allows us to construct local-in-time solutions from the initial data $(u_0,w_0) \in (C(\R^d) \cap L^\infty(\R^d))^2$.
In contrast, the comparison principle for the associated mass functions serves as a key ingredient in controlling the dynamical behavior of solutions to the system \eqref{mass:3}
for the mass function. 

In Section~\ref{chap:loc}, we establish local-in-time existence of solutions to \eqref{p} in appropriate function spaces. As a first step, we construct local-in-time solutions with $v = E_d * w$ in the scaling-critical Morrey spaces.
Such solutions are obtained by a standard application of the Banach fixed point
theorem (see \cite{BKP2019, B1995, FS2017, HL2025}). As a second approach, we construct the solutions using the comparison principle established in Section~\ref{chap:2} and radial symmetry.
In this case, the potential $v$ is expressed through a radial integral representation \eqref{integral formula}.

In Section~\ref{chap:th_1}, we present the proofs of Theorem~\ref{th:1} and Theorem~\ref{th:2}.
In particular, the proof of Theorem~\ref{th:2} is inspired by the approach developed in
\cite{BKP2019}.
However, since the system under consideration is essentially of
parabolic-parabolic type, consisting of two genuinely coupled equations,
additional care is required to adapt this method to the present setting.

In Section~\ref{chap:blowup}, we develop the analytical framework needed to prove convergence of solutions toward the stationary state. However, we cannot rely on arguments based on the parabolic-elliptic structure of the simplified Keller--Segel system \eqref{KS_0} (see \cite{W2023_2, N2021}). Instead, we construct a novel subsolution to the system \eqref{mass:3} satisfied by the mass functions that exhibits finite time blow-up. This approach enables us to prove that infinite time blow-up can occur only
at the origin, which in turn leads to convergence of the mass functions
toward the stationary state.
We stress that the novelty of our approach lies not only in the construction
of a new blowing up subsolution itself,
but also in the strategy behind its construction,
which may have further applications to other nonlinear
parabolic systems beyond the classical Keller--Segel system.

In Section~\ref{chap:sta}, we investigate the convergence toward stationary solutions and
their qualitative properties.
We first show that the stationary problem \eqref{S0} associated with \eqref{p} coincides with the forth-order elliptic problem \eqref{S2}.
This fact, together with the assumptions of Theorem~\ref{th:3}, allows the comparison principles developed earlier to be applied effectively in the proof of Theorem~\ref{th:3} (see the next section). Next, we establish convergence of the mass functions toward their stationary states.
After that, starting from the stationary mass functions, we construct the
corresponding stationary solutions to the original system \eqref{p}. Unlike approaches that focus primarily on the stationary mass function (see \cite{W2023_2, N2021}), we place particular emphasis on the analysis of the asymptotic
behavior of stationary solutions to the original problem \eqref{p}.
This constitutes a new perspective in the study of the system.

In Section~\ref{chap:th_2}, we provide the proof of Theorem~\ref{th:3} based on the results established
in the preceding sections.
A key point of the argument is that the proof relies on a detailed analysis of
the stationary solutions to the original problem \eqref{p}, rather than on the stationary problem for the mass functions.

Finally, in Section~\ref{chap:th_3}, motivated by the results \cite{G2014, BFFG2012} showing that the asymptotic behavior of radially symmetric solutions to \eqref{S2} depends on the
space dimension, we present the proofs of Theorem~\ref{th:3.5} and Theorem~\ref{th:4}.

\section{Comparison principles}\label{chap:2}

In this section, we develop the comparison principles that serve as a fundamental tool in proving the main theorems of this paper. The first comparison principle is formulated directly for the original system \eqref{p}.
In particular, it allows us to construct ODE–type subsolutions and supersolutions to \eqref{p}, which will play a crucial role in the proof of Proposition~\ref{local_2}.

\begin{lemma}\label{CP_1}
Let $d \ge 1$ and $T > 0$, and let $h \in C([0,\infty) \times [0,T))$ be such that
\begin{align*}
\sup_{(r,t) \in (0,\infty)\times (0,T_0)} |h(r,t)| < \infty\quad \mathrm{for\ any}\ T_0 < T.
\end{align*}
Assume the nonnegative functions $(\bar{z},	\underline{z}, \bar{y}, \underline{y}) \in [C([0,\infty) \times [0,T)) \cap C^{2,1}([0,\infty)\times (0,T))]^4$ satisfy 
\begin{align*}
\sup_{(r,t) \in (0,\infty)\times (0,T_0)} \{|\bar{z}(r,t)| + |\underline{z}(r,t)| + |\bar{y}(r,t)| + |\underline{y}(r,t)|\} < \infty \quad \mathrm{for\ any}\ T_0 < T
\end{align*}
and in addition for all $(r,t) \in (0,\infty) \times (0,T)$, 
\begin{equation}\label{ine:CP_1}
\begin{cases}
\underline{z}_t - \underline{z}_{rr} - \dfrac{d-1}{r}\underline{z}_r - r h \underline{z}_r - \underline{z}\underline{y} \leq \bar{z}_t-\bar{z}_{rr} - \dfrac{d-1}{r}\bar{z}_r - rh\bar{z}_r - \bar{z}\bar{y},\\[1ex]
\underline{y}_t - \underline{y}_{rr} - \dfrac{d-1}{r}\underline{y}_r - \underline{z}\leq \bar{y}_t - \bar{y}_{rr} - \dfrac{d-1}{r}\bar{y}_r - \bar{z}.
\end{cases}
\end{equation}
One can conclude that for all $(r,t) \in (0,\infty) \times [0,T)$,
\begin{align*}
\underline{z}(r,t) \leq \bar{z}(r,t),\quad \underline{y}(r,t) \leq \bar{y}(r,t)
\end{align*}
if the following conditions hold that for all $r \in (0,\infty),$
\begin{align}\label{asp:CP_1}
\underline{z}(r,0) \leq \bar{z}(r,0),\quad \underline{y}(r,0) \leq \bar{y}(r,0).
\end{align}
\end{lemma}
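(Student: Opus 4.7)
The approach is to show nonnegativity of the differences $Z := \bar z - \underline z$ and $Y := \bar y - \underline y$ via a perturbed weak maximum principle, handling the coupling between the two inequalities simultaneously. First I would use the identity $\bar z \bar y - \underline z \underline y = \bar y Z + \underline z Y$ to rewrite \eqref{ine:CP_1} as the coupled system of linear parabolic differential inequalities
\begin{align*}
Z_t - Z_{rr} - \frac{d-1}{r}Z_r - rh\,Z_r - \bar y\, Z - \underline z\, Y &\geq 0,\\
Y_t - Y_{rr} - \frac{d-1}{r}Y_r - Z &\geq 0,
\end{align*}
on $(0,\infty) \times (0,T)$, together with $Z(\cdot,0), Y(\cdot,0) \geq 0$ from \eqref{asp:CP_1}. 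By hypothesis, the coefficients $\bar y$, $\underline z$ and $|h|$ are bounded on $(0,\infty)\times(0,T_0)$ for every $T_0 < T$.

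Next, fix $T_0 \in (0,T)$, set $M := \sup_{(0,\infty)\times(0,T_0)}(\bar y + \underline z)$ and $H := \sup_{(0,\infty)\times(0,T_0)}|h|$, and introduce the barrier $\Psi(r,t) := \varepsilon e^{Kt}(1+r^2)$ for $\varepsilon>0$ and a constant $K > M + 2d + 2H + 1$ to be chosen. A direct computation using $\Psi_t = K\Psi$, $\Psi_{rr} + \tfrac{d-1}{r}\Psi_r = 2d\,\varepsilon e^{Kt}$ and $r\Psi_r = 2r^2\varepsilon e^{Kt}$ gives, on $(0,\infty) \times [0,T_0]$,
\begin{align*}
\Psi_t - \Psi_{rr} - \tfrac{d-1}{r}\Psi_r - rh\,\Psi_r - (\bar y + \underline z)\Psi &\geq \varepsilon e^{Kt}\bigl[(K-M-2H)r^2 + (K-M-2d)\bigr] > 0,\\
\Psi_t - \Psi_{rr} - \tfrac{d-1}{r}\Psi_r - \Psi &= \varepsilon e^{Kt}\bigl[(K-1)(1+r^2) - 2d\bigr] > 0.
\end{align*}
Setting $\tilde Z := Z + \Psi$ and $\tilde Y := Y + \Psi$ and using $\bar y\tilde Z + \underline z\tilde Y = \bar y Z + \underline z Y + (\bar y+\underline z)\Psi$, these strict bounds combine with the weak inequalities above to give strict versions of both parabolic operators applied to $(\tilde Z, \tilde Y)$. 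Since $Z,Y$ are bounded on $(0,\infty)\times(0,T_0)$ while $\Psi(r,t)\to\infty$ as $r\to\infty$ uniformly in $t \in [0,T_0]$, the infima $\inf_{r\geq 0}\tilde Z(r,t)$ and $\inf_{r\geq 0}\tilde Y(r,t)$ are attained at finite $r$ for every $t \in [0,T_0]$.

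I would then argue by contradiction: let
\[
t^* := \inf\bigl\{t \in (0,T_0] : \inf_r \tilde Z(r,t) \leq 0 \text{ or } \inf_r \tilde Y(r,t) \leq 0\bigr\}.
\]
The initial positivity $\tilde Z(\cdot,0), \tilde Y(\cdot,0) \geq \varepsilon$, continuity, and the uniform growth at infinity force $t^* > 0$, and at $t = t^*$ one of the two functions attains the value $0$ at some $r^* \in [0,\infty)$ while the other remains nonnegative on $[0,t^*]\times[0,\infty)$. At the minimizing point $(r^*,t^*)$ of the vanishing function, parabolic minimum conditions give $\partial_t \leq 0$, $\partial_r = 0$ and $\partial_{rr} \geq 0$—at $r^* = 0$ the radial regularity yields $\partial_r(0,\cdot) = 0$ automatically and $\partial_{rr} + \tfrac{d-1}{r}\partial_r$ is interpreted as $d\,\partial_{rr}(0,\cdot)$ by Taylor expansion. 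Substituting these into the corresponding strict inequality, and using nonnegativity of $\underline z$, $\bar y$ and of the other component, yields a nonpositive left-hand side, contradicting the strict positivity. Hence $\tilde Z, \tilde Y > 0$ on $[0,\infty)\times[0,T_0]$ for every $\varepsilon > 0$; letting first $\varepsilon \to 0$ and then $T_0 \nearrow T$ concludes $Z, Y \geq 0$ as required.

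The main technical obstacle is the interplay of two features: the genuine coupling between the $Z$- and $Y$-inequalities, which forces the analysis to be carried out at the first vanishing time of \emph{either} $\tilde Z$ or $\tilde Y$ rather than each one separately, and the unboundedness of the spatial domain, which rules out a direct application of the weak maximum principle on a compact cylinder. The scalar barrier $\Psi$, quadratic in $r$ and exponential in $t$, is designed to resolve both issues at once: its $r$-growth forces attainment of spatial minima at finite points, while the exponential time factor with a sufficiently large rate $K$ absorbs both coupling terms $\bar y Z + \underline z Y$ and $Z$.
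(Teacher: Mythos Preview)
Your proof is correct and follows essentially the same strategy as the paper's: a perturbed weak maximum principle on the half-line with a quadratic-in-$r$, exponentially-growing-in-$t$ barrier to force attainment of extrema and absorb the bounded zero-order coefficients. The differences are purely organizational: the paper multiplies the differences by a damping factor $e^{-\alpha t}$ and subtracts a separate barrier $\varepsilon e^{\beta t}(r^2+1)$, then argues sequentially (first-touching time of the $z$-component, then shows the $y$-component is ordered on that interval, then closes the contradiction), whereas you combine damping and barrier into a single weight $\varepsilon e^{Kt}(1+r^2)$ with $K$ large and argue simultaneously via the first-touching time of \emph{either} component. Your simultaneous formulation is slightly more streamlined, but both arguments are the same in substance.
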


\begin{proof}
We fix $T_0 < T$ and let $\varepsilon > 0$ and $(\alpha, \beta)$ be such that
\begin{align*}
\alpha = \sup_{(r,t) \in (0,\infty) \times (0,T_0)} \{|\underline{y}(r,t)| + |\bar{z}(r,t)| + 2|h(r,t)|\} + 2d,\quad \beta > 1.
\end{align*}
Here our assumption implies $\alpha < \infty$.
Let us denote $(\varphi,\psi)$ by
\begin{align*}
\varphi := e^{-\alpha t}(\underline{z}(r,t)-\bar{z}(r,t)) -\varepsilon e^{\beta t}\xi(r),\quad \psi := e^{-\alpha t}(\underline{y}(r,t)-\bar{y}(r,t))-\varepsilon e^{\beta t}\xi(r)
\end{align*}
for all $(r,t) \in [0,\infty) \times [0,T_0]$, where $\xi (r) = r^2 + 1$ for all $r \ge 0$.
We now can write \eqref{ine:CP_1} as
\begin{align}\label{ine1:CP_1}
\varphi_t - \varphi_{rr} - \dfrac{d-1}{r}\varphi_r -rh\varphi_r
&\leq e^{-\alpha t}(\underline{z}-\bar{z})(\underline{y}-\alpha) + e^{-\alpha t}\bar{z}(\underline{y}-\bar{y}) -\varepsilon\beta e^{\beta t}\xi(r)\notag\\
&\hspace{1cm} + 2\varepsilon e^{\beta t} + 2(d-1)\varepsilon e^{\beta t} + 2r^2 h \varepsilon e^{\beta t}
\end{align}
and also as
\begin{align}\label{ine2:CP_1}
\psi_t - \psi_{rr} - \dfrac{d-1}{r}\psi_r \leq \varphi - \alpha e^{-\alpha t }(\underline{y}-\bar{y}) - 
\varepsilon(\beta - 1)e^{\beta t}\xi(r) + 2d \varepsilon e^{\beta t}.
\end{align}
We note that $\phi$ and $\psi$ diverge to $-\infty$ uniformly in $t \in [0,T_0]$ as $r \to \infty$.
Thanks to the initial condition \eqref{asp:CP_1} and the continuity of $\underline{z}, \bar{z}, \underline{y}, \bar{y}$, setting 
\begin{align*}
t_0 := \sup\{\hat{T} \in (0,T_0) ; \varphi(r,t) < 0\quad \mathrm{for}\ (r,t) \in [0,\infty) \times [0,\hat{T}]\},
\end{align*}
we confirm that $t_0$ is well-defined and positive with $t_0 \leq T_0$. Here if $t_0 < T_0$, from the continuity of these functions we may find the local maximum point $r_0 \in [0,\infty)$ of the function $\varphi$, that is $\varphi (r_0,t_0) = 0$, 
where
\begin{align*}
\varphi_t (r_0,t_0) \ge 0,\quad \varphi_r (r_0,t_0) = 0,\quad \Delta \varphi (r_0, t_0)\leq 0.
\end{align*}
Applying the above fact to the inequality \eqref{ine1:CP_1} and setting $\gamma = \sup_{(r,t) \in (0,\infty)\times (0,T_0)} |h(r,t)|$, we get
\begin{align}
0&\leq \varphi_t(r_0,t_0) - \Delta \varphi (r_0,t_0)\notag\\
&\leq r_0 h(r_0,t_0) \varphi_r(r_0,t_0) + e^{-\alpha t_0} (\underline{z}-\bar{z})(\underline{y}-\alpha)(r_0,t_0) + e^{-\alpha t_0}\bar{z}(\underline{y}-\bar{y})(r_0,t_0) - \varepsilon\beta e^{\beta t_0}\xi(r_0)\notag\\
&\hspace{1cm} + 2\varepsilon e^{\beta t_0} + 2(d-1)\varepsilon e^{\beta t_0} + 2r_0^2 h(r_0,t_0) \varepsilon e^{\beta t_0}\notag\\
&= e^{-\alpha t_0} (\underline{z}-\bar{z})(\underline{y}-\alpha)(r_0,t_0) + e^{-\alpha t_0}\bar{z}(\underline{y}-\bar{y})(r_0,t_0) - \varepsilon\beta e^{\beta t_0}\xi(r_0)\notag\\
&\hspace{1cm} + 2d\varepsilon e^{\beta t_0} + 2r_0^2 h(r_0,t_0) \varepsilon e^{\beta t_0}\notag\\
&\leq \varepsilon e^{\beta t_0}\xi(r_0) (\underline{y} + 2\gamma + 2d -\alpha)(r_0,t_0) + e^{-\alpha t_0}\bar{z}(\underline{y}-\bar{y})(r_0,t_0) - \varepsilon\beta e^{\beta t_0}\xi(r_0).\label{ine3:CP_1}
\end{align}
To finish the proof, we will next show that $\psi (r,t) \leq 0$ for any $(r,t) \in [0,\infty) \times [0,t_0]$. Since the definition of $t_0$ ensures that $\varphi(r,t) < 0$ for all $(r,t) \in [0,\infty) \times [0,t_0)$,  we can now proceed analogously to the argument developed above. In fact if our claim is false, then there exists $(r_1, t_1) \in [0,\infty) \times (0,t_0)$ such that
\begin{align*}
\psi(r_1,t_1) = 0,\quad\psi_t (r_1,t_1) \ge 0,\quad \psi_r(r_1,t_1) = 0,\quad \Delta \psi(r_1,t_1) \leq 0.
\end{align*}
This yields that
\begin{align*}
0 &\leq \psi_t (r_1,t_1) - \Delta\psi(r_1,t_1)\\
&\leq \varphi(r_1,t_1) -\alpha e^{-\alpha t_1}(\underline{y}-\bar{y})(r_1,t_1)
-\varepsilon (\beta -1)e^{\beta t_1}\xi(r_1) + 2d\varepsilon e^{\beta t_1}\\
&\leq -\alpha \varepsilon e^{\beta t_1} \xi (r_1) -\varepsilon (\beta -1)e^{\beta t_1}\xi(r_1) + 2d\varepsilon e^{\beta t_1}\xi(r_1)\\
&= \varepsilon e^{\beta t_1}\xi(r_1)(2d - \alpha) - \varepsilon (\beta -1)e^{\beta t_1}\xi(r_1)\\
&\leq -\varepsilon (\beta -1)e^{\beta t_1}\xi(r_1) < 0, 
\end{align*}
which is not reasonable. Thus we obtain that $\psi(r,t) < 0$ for all $(r,t) \in [0,\infty) \times [0,t_0)$, as a consequence of the continuity of $\psi$, we have $\psi (r_0,t_0) \leq 0$. Returning to the inequality \eqref{ine3:CP_1}
once again, we infer from the definition of $\alpha$ 
that 
\begin{align*}
0 &\leq \varphi_t(r_0,t_0) -\Delta\varphi(r_0,t_0)\\
&\leq \varepsilon e^{\beta t_0}\xi(r_0)(\underline{y} + 2\gamma + 2d-\alpha)(r_0,t_0) + \varepsilon e^{\beta t_0}\xi(r_0) \bar{z}(r_0,t_0) - \varepsilon \beta e^{\beta t_0}\xi(r_0)\\
&= \varepsilon e^{\beta t_0} \xi(r_0) (\underline{y} + \bar{z} + 2\gamma + 2d- \alpha)(r_0,t_0) -\varepsilon\beta e^{\beta t_0}\xi(r_0) < 0, 
\end{align*}
which is impossible. Consequently we conclude that $t_0 = T_0$ and we complete the proof by carrying out an analogous argument for the inequality \eqref{ine2:CP_1} and after that letting by $\varepsilon \to 0$ and then $T_0 \to T$.
\end{proof}

Let $d \ge 1$ and $T > 0$, and we assume the radially symmetric nonnegative initial data $(u_0,w_0) \in (C(\R^d) \cap L^\infty(\R^d))^2$. We consider the radially nonnegative solution $(u,v,w)$ of \eqref{p} in the classical sense so that
\begin{equation*}
\begin{cases}
u \in C(\R^d \times [0,T)) \cap C^{2,1}(\R^d \times (0,T)),\\
v \in C^{4,0}(\R^d \times (0,T)),\\
w \in C(\R^d \times [0,T)) \cap C^{2,1}(\R^d \times (0,T)).
\end{cases}
\end{equation*}
By setting the mass function $(M,W)$ by
\begin{equation}\label{mass:1}
M(r,t):=
\begin{cases}
r^{-d} \int_0^r \rho^{d-1}u(\rho,t) d\rho\quad &\mathrm{in}\ (0,\infty) \times [0,T),\\
\frac{u(0,t)}{d}\quad &\mathrm{in}\  \{0\} \times [0,T),
\end{cases}
\end{equation}
\begin{equation}\label{mass:2}
W(r,t):=
\begin{cases}
r^{-d} \int_0^r \rho^{d-1}w(\rho,t) d\rho\quad &\mathrm{in}\ (0,\infty) \times [0,T),\\
\frac{w(0,t)}{d}\quad &\mathrm{in}\  \{0\} \times [0,T).
\end{cases}
\end{equation}
One can readily verify that the mass function $(M,W) \in [C([0,\infty)\times [0,T)) \cap C^{2,1}([0,\infty) \times (0,T))]^2$
satisfies the parabolic system
\begin{equation}\label{mass:3}
\begin{cases}
M_t = M_{rr} + \dfrac{d+1}{r}M_r + rM_r W + dMW\quad &\mathrm{in}\ (0,\infty) \times (0,T),\\
W_t = W_{rr} + \dfrac{d+1}{r}W_r + M \quad &\mathrm{in}\ (0,\infty) \times (0,T),\\
M_r(0,t) = W_r(0,t) = 0 \quad &\mathrm{in}\ (0,T),\\
M(r,0) = M_0(r), W(r,0) = W_0(r) \quad &\mathrm{in}\ (0, \infty).
\end{cases}
\end{equation}

We then establish a comparison principle for the mass function system \eqref{mass:3} associated with \eqref{p}.
Here we emphasize that, unlike the simplified Keller--Segel system \eqref{KS_0}, transforming \eqref{p} into the mass variables $(M,W)$ does not reduce the problem to a single scalar parabolic equation, namely the resulting system \eqref{mass:3} remains genuinely coupled. 

\begin{lemma}\label{CP_2}
Let $d \ge 1$ and $T > 0$, and $R \in (0,\infty]$. Define
\begin{align*}
\mathcal{Q}_{T_0} := \{ (r,t) \in \R^2; r \in (0,R), t \in [0,T_0)\}\quad \mathrm{for}\ T_0 \in (0,T].
\end{align*}
Assume that $(\underline{M}, \overline{M}, \underline{W}, \overline{W})$ belongs to $(C^1(\overline{\mathcal{Q}_{T}}))^4$ 
satisfying
\begin{align}
\mathrm{either}\ r\underline{M}_r + d\underline{M} \ge 0\ \mathrm{or}\ r\overline{M}_r + d\overline{M} \ge 0\ \mathrm{in}\ \mathcal{Q}_{T}\label{CP:asp:ineq}
\end{align}
as well as the boundedness condition
\begin{align*}
\sup_{(r,t) \in \mathcal{Q}_{T_0}}\{|\underline{M}(r,t)| + |\overline{M}(r,t)| + |\underline{W}(r,t)| + |\overline{W}(r,t)|\} < \infty\quad \mathrm{for\ all}\ T_0 \in (0,T)
\end{align*}
and
\begin{align*}
\mathrm{either}\ \sup_{(r,t) \in \mathcal{Q}_{T_0}} |r\underline{M}_r(r,t)|< \infty\ \mathrm{or}\ \sup_{(r,t) \in \mathcal{Q}_{T_0}}|r\overline{M}_r(r,t)| < \infty\quad \mathrm{for\ all}\ T_0 \in (0,T)
\end{align*}
and such that for all $(r,t) \in (0,R) \times (0,T)$
\begin{equation}\label{ine1:CP_2}
\begin{cases}
\underline{M}_t - \underline{M}_{rr} - \dfrac{d+1}{r}\underline{M}_r - r \underline{M}_r \underline{W} - d\underline{M}\underline{W}\\
\hspace{1cm} \leq \overline{M}_t - \overline{M}_{rr} - \dfrac{d+1}{r}\overline{M}_r - r\overline{M}_r \overline{W} - d \overline{M}\overline{W},\\
\underline{W}_t - \underline{W}_{rr} - \dfrac{d+1}{r}\underline{W}_r - \underline{M}\\
\hspace{1cm}\leq 
\overline{W}_t - \overline{W}_{rr} - \dfrac{d+1}{r}\overline{W_r} - \overline{M}.
\end{cases}
\end{equation}
Assume further that the initial ordering holds:
\begin{align}\label{asp:CP_2}
\underline{M}(r,0) \leq \overline{M}(r,0), \quad \underline{W}(r,0) \leq \overline{W}(r,0)\quad\mathrm{for\ all}\ r \in (0,R).
\end{align}
Then it holds that:
\begin{enumerate}
\item[\rm{(i)}] if $R = \infty$, then for all $(r,t) \in \mathcal{Q}_T$
\begin{align*}
\underline{M}(r,t) \leq \overline{M}(r,t), \quad \underline{W}(r,t) \leq \overline{W}(r,t).
\end{align*}
\item[\rm{(ii)}] if $R < \infty$ and in addition for all $t \in (0,T)$
\begin{align*}
\underline{M}(R,t) \leq \overline{M}(R,t), \quad \underline{W}(R,t) \leq \overline{W}(R,t),
\end{align*}
then for all $(r,t) \in \mathcal{Q}_T$
\begin{align*}
\underline{M}(r,t) \leq \overline{M}(r,t), \quad \underline{W}(r,t) \leq \overline{W}(r,t).
\end{align*}
\end{enumerate}

\end{lemma}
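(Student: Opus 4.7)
The plan is to mirror the proof of Lemma~\ref{CP_1}, adapted to accommodate the additional nonlinear drift $rM_rW$ in the $M$-equation together with the structural hypothesis \eqref{CP:asp:ineq}. Without loss of generality assume $r\overline{M}_r+d\overline{M}\ge 0$ on $\mathcal{Q}_T$. For fixed $T_0\in(0,T)$, introduce the perturbed differences
$$\varphi:=e^{-\alpha t}(\underline{M}-\overline{M})-\varepsilon e^{\beta t}\xi(r),\qquad \psi:=e^{-\alpha t}(\underline{W}-\overline{W})-\varepsilon e^{\beta t}\xi(r),$$
with weight $\xi(r)=r^2+1$ (whose blow-up at infinity is essential in case (i)), with small $\varepsilon>0$, and with large constants $\alpha,\beta$ selected in terms of the dimension and of the $L^\infty$ bounds on the four functions and on $|r\overline{M}_r|$.

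The decisive algebraic identity is
$$(r\underline{M}_r+d\underline{M})\underline{W}-(r\overline{M}_r+d\overline{M})\overline{W}=\bigl[r(\underline{M}-\overline{M})_r+d(\underline{M}-\overline{M})\bigr]\underline{W}+(r\overline{M}_r+d\overline{M})(\underline{W}-\overline{W}),$$
isolating the two differences we need to control and exposing the nonnegative factor $r\overline{M}_r+d\overline{M}$ that will carry $\underline{W}-\overline{W}$ with the correct sign once $\psi\le 0$ is known. Substituting this into the subtracted inequalities \eqref{ine1:CP_2}, converting to $(\varphi,\psi)$, and choosing $\alpha\ge d\|\underline{W}\|_{L^\infty(\mathcal{Q}_{T_0})}$ yields, after absorbing the $\varepsilon$-order remainders generated by $\xi,\xi_r,\xi_{rr}$ (using the boundedness of $\underline{W}$) into a single constant $C$,
\begin{align*}
\varphi_t-\varphi_{rr}-\tfrac{d+1}{r}\varphi_r-r\underline{W}\varphi_r+(\alpha-d\underline{W})\varphi&\le e^{-\alpha t}(r\overline{M}_r+d\overline{M})(\underline{W}-\overline{W})+(C-\beta)\varepsilon e^{\beta t}\xi,\\
\psi_t-\psi_{rr}-\tfrac{d+1}{r}\psi_r&\le\varphi+C\varepsilon e^{\beta t}-\varepsilon(\beta-1)e^{\beta t}\xi.
\end{align*}

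Set $t_0:=\sup\{\hat T\in(0,T_0]:\varphi<0\text{ on }[0,R)\times[0,\hat T]\}$; by \eqref{asp:CP_2} and continuity, $t_0>0$. Assume toward contradiction that $t_0<T_0$. The growth of $\xi$ at infinity in case (i), respectively the prescribed boundary ordering in case (ii), rule out contact at $r=R$, so the supremum of $\varphi(\cdot,t_0)$ is attained at some interior $r_0\in[0,R)$ where $\varphi(r_0,t_0)=0$ and $\varphi_t\ge0$, $\varphi_r=0$, $\varphi_{rr}+\tfrac{d+1}{r_0}\varphi_r\le0$. I first verify $\psi\le 0$ on $[0,R)\times[0,t_0]$ by a parallel and simpler maximum-principle argument on the linear $\psi$-inequality: since $\varphi<0$ on $[0,R)\times[0,t_0)$, any would-be contact point of $\psi$ there would force $0\le\varphi+C\varepsilon e^{\beta t}-\varepsilon(\beta-1)e^{\beta t}\xi<0$ for $\beta>C+1$, which is impossible. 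With $\psi(r_0,t_0)\le0$ in hand, $(\underline{W}-\overline{W})(r_0,t_0)\le\varepsilon e^{(\alpha+\beta)t_0}\xi(r_0)$, so the right-hand side of the $\varphi$-inequality at $(r_0,t_0)$ is bounded above by
$$\varepsilon e^{\beta t_0}\xi(r_0)\Bigl[e^{\alpha t_0}\sup_{\mathcal{Q}_{T_0}}(r\overline{M}_r+d\overline{M})+C-\beta\Bigr],$$
which is strictly negative for $\beta$ sufficiently large. This contradicts the max-point inequality at $(r_0,t_0)$, forcing $t_0=T_0$. Sending $\varepsilon\to0$ and then $T_0\to T$ yields $\underline{M}\le\overline{M}$, and the $\psi$-claim combined with $\varepsilon\to0$ gives $\underline{W}\le\overline{W}$.

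The main obstacle I anticipate is the bookkeeping when rewriting the nonlinear drift $rM_rW+dMW$ in the $\varphi$-variable: the perturbation $\varepsilon e^{\beta t}\xi$ generates extra terms $r\xi_r\underline{W}$, $d\xi\underline{W}$, $\xi_{rr}$, $\tfrac{d+1}{r}\xi_r$ that must all be absorbed into the reservoir $\varepsilon\beta e^{\beta t}\xi$; this absorption uses the boundedness of $\underline{W}$ and of $r\overline{M}_r$ (or $r\underline{M}_r$) from the hypothesis in an essential way. The structural condition \eqref{CP:asp:ineq} plays the role of making the cross term $(r\overline{M}_r+d\overline{M})(\underline{W}-\overline{W})$ sign-definite, directly analogous to how nonnegativity of $\bar z$ controlled the term $\bar z(\underline y-\bar y)$ in the proof of Lemma~\ref{CP_1}, but now encoded as a genuine structural requirement on the sub- or supersolution rather than a mere positivity of one of the functions.
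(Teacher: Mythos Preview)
Your proposal is correct and follows essentially the same approach as the paper: the same perturbed differences $\varphi,\psi$ with weight $\xi(r)=r^2+1$, the same algebraic decomposition of the nonlinearity exposing the factor $r\overline{M}_r+d\overline{M}$, the same first-contact-time argument establishing $\psi\le 0$ before deriving the contradiction for $\varphi$. The only cosmetic difference is that the paper absorbs all bad terms into a single large $\alpha$ (taking $\alpha=\sup\{(d+2)|\underline{W}|+d|\overline{M}|+|r\overline{M}_r|\}+2(d+2)$ and then any $\beta>1$), whereas you keep $\alpha$ minimal and push the remaining absorption onto $\beta$; both parametrizations close the argument. Your bound $\varepsilon e^{\beta t_0}\xi(r_0)\bigl[e^{\alpha t_0}\sup(r\overline{M}_r+d\overline{M})+C-\beta\bigr]$ carries a harmless extra factor $e^{\alpha t_0}$ (the $e^{-\alpha t_0}$ in front of $(\underline{W}-\overline{W})$ cancels exactly against the $e^{\alpha t_0}$ coming from $\psi\le 0$), but since $t_0\le T_0$ this does not affect the conclusion.
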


\begin{proof}
The case (ii) can be handled by an entirely similar argument, so we restrict ourselves to the proof of (i).
We fix $T_0 < T$ and let $\varepsilon > 0$ and $(\alpha, \beta)$ be such that
\begin{align*}
\alpha = \sup_{(r,t) \in \mathcal{Q}_{T_0}} \{ (d+2) |\underline{W}(r,t)| + d|\overline{M}(r,t)| + |r\overline{M}_r(r,t)|\} + 2(d+2),\quad \beta > 1.
\end{align*}
Here we note that our assumption ensures $\alpha < \infty$.
Denoting $(\varphi,\psi)$ by
\begin{align*}
\varphi := e^{-\alpha t}(\underline{M}(r,t)-\overline{M}(r,t)) -\varepsilon e^{\beta t}\xi(r),\quad \psi := e^{-\alpha t}(\underline{W}(r,t)-\overline{W}(r,t))-\varepsilon e^{\beta t}\xi(r)
\end{align*}
for all $(r,t) \in [0,\infty) \times [0,T_0]$, where $\xi (r) = r^2 + 1$ for all $r \ge 0$, 
we may then rewrite the system \eqref{ine1:CP_2} in the form
\begin{align}
\varphi_t - \varphi_{rr} - \dfrac{d+1}{r}\varphi_r
&\leq e^{-\alpha t}(\underline{M}-\overline{M})(d\underline{W}-\alpha) + e^{-\alpha t}(\underline{W}-\overline{W})(r\overline{M}_r + d\overline{M})\notag\\
&\hspace{1cm} + e^{-\alpha t}r\underline{W}(\underline{M}_r - \overline{M}_r) -\varepsilon\beta e^{\beta t}\xi(r) + 2(d+2)\varepsilon e^{\beta t}\label{ine2:CP_2}
\end{align}
and further
\begin{align}\label{ine3:CP_2}
&\psi_t - \psi_{rr} - \dfrac{d+1}{r}\psi_r\notag\\
&\hspace{1cm} \leq \varphi -\alpha e^{-\alpha t}(\underline{W}(r,t)-\overline{W}(r,t))- \varepsilon(\beta - 1)e^{\beta t}\xi(r) + 2(d+2)\varepsilon e^{\beta t}.
\end{align}
Using the initial condition \eqref{asp:CP_2} together with the continuity of $\underline{M}, \overline{M}, \underline{W}, \overline{W}$, we define
\begin{align*}
t_0 := \sup\{\hat{T} \in (0,T_0) ; \varphi(r,t) < 0\quad \mathrm{for}\ (r,t) \in [0,\infty) \times [0,\hat{T}]\}.
\end{align*}
This quantity $t_0$ is strictly positive and satisfies $t_0 \leq T_0$. Here if $t_0 < T_0$, then continuity allows us to choose $r_0 \in [0,\infty)$ such that $\varphi (\cdot, t_0)$ attains a local maximum at $r_0$.
Hence at this point we have
\begin{align*}
\varphi_t (r_0,t_0) \ge 0,\quad \varphi_r (r_0,t_0) = 0,\quad \Delta_{d + 2} \varphi (r_0, t_0)\leq 0,
\end{align*}
which $\Delta_{d + 2}$ denotes Laplacian in $\R^{d+2}$.
Applying the above considerations to the inequality \eqref{ine2:CP_2}, we arrive at
\begin{align}
0&\leq \varphi_t(r_0,t_0) - \Delta_{d + 2} \varphi (r_0,t_0)\notag\\
&\leq \varepsilon e^{\beta t_0} \xi(r_0)(d\underline{W}-\alpha)(r_0,t_0) + e^{-\alpha t_0} (\underline{W}-\overline{W})(r\overline{M}_r + d\overline{M})(r_0,t_0)\notag\\
&\hspace{1cm} + 2r_0^2 \underline{W}(r_0,t_0)\varepsilon e^{\beta t_0} -\varepsilon \beta e^{\beta t_0}\xi(r_0) + 2(d+2) \varepsilon e^{\beta t_0}\notag\\
&\leq \varepsilon e^{\beta t_0} \xi(r_0)((d+2)\underline{W} + 2(d+2)-\alpha)(r_0,t_0) - \varepsilon \beta e^{\beta t_0}\xi(r_0)\notag\\
&\hspace{1cm}+ e^{-\alpha t_0} (\underline{W}-\overline{W})(r\overline{M}_r + d\overline{M})(r_0,t_0)
.\label{ine4:CP_2}
\end{align}
The final step is to show that $\psi (r,t) \leq 0$ for any $(r,t) \in [0,\infty) \times [0,t_0]$.
Because $\varphi(r,t)$ stays strictly negative for all $(r,t) \in [0,\infty) \times [0,t_0)$, the same contradiction argument can be carried out for $\psi$.
If the claim failed, then some point $(r_1, t_1) \in [0,\infty) \times (0,t_0)$ would satisfy
\begin{align*}
\psi(r_1,t_1) = 0,\quad\psi_t (r_1,t_1) \ge 0,\quad \psi_r(r_1,t_1) = 0,\quad \Delta_{d + 2} \psi(r_1,t_1) \leq 0.
\end{align*}
This leads to 
\begin{align*}
0 &\leq \psi_t (r_1,t_1) - \Delta_{d + 2}\psi(r_1,t_1)\\
&\leq \varphi(r_1,t_1) -\alpha e^{-\alpha t_1}(\underline{W}(r_1,t_1)-\overline{W}(r_1,t_1))- \varepsilon(\beta - 1)e^{\beta t_1}\xi(r_1) + 2(d+2)\varepsilon e^{\beta t_1}\\
&\leq -\alpha \varepsilon e^{\beta t_1}\xi(r_1)-\varepsilon (\beta -1)e^{\beta t_1}\xi(r_1)
 + 2(d+2)\varepsilon e^{\beta t_1}\\
 &\leq \varepsilon e^{\beta t_1} \xi (r_1) (2(d+2) - \alpha) -\varepsilon (\beta -1)e^{\beta t_1}\xi(r_1) < 0,
\end{align*}
which is a contradiction. Hence by continuity of $\psi$, this further implies that $\psi (r_0,t_0) \leq 0$.
We now recall the standing assumption:
\begin{align*}
r\overline{M}_r + d\overline{M} \ge 0\quad \mathrm{for\ all} \ (r,t) \in \mathcal{Q}_{T_0}.
\end{align*}
Here we remark that a similar argument can be carried out under the alternative assumption 
\begin{align*}
r\underline{M}_r + d\underline{M} \ge 0.
\end{align*}
Indeed, by a slight modification of the computation leading to \eqref{ine2:CP_2}, the following analysis remains valid under the assumption 
\begin{align*}
\sup_{(r,t) \in \mathcal{Q}_{T_0}}|r\underline{M}_r(r,t)| < \infty\quad \mathrm{for\ all}\ T_0 \in (0,T).
\end{align*}
Therefore returning once again to inequality \eqref{ine4:CP_2}, one can check the following computation from the definition of $\alpha$.
\begin{align*}
0 &\leq \varphi_t(r_0,t_0) -\Delta_{d+2}\varphi(r_0,t_0)\\
&\leq \varepsilon e^{\beta t_0} \xi(r_0)((d+2)\underline{W} + 2(d+2)-\alpha)(r_0,t_0) - \varepsilon \beta e^{\beta t_0}\xi(r_0)\notag\\
&\hspace{1cm}+ e^{-\alpha t_0} (\underline{W}-\overline{W})(r\overline{M}_r + d\overline{M})(r_0,t_0)\\
&\leq \varepsilon e^{\beta t_0} \xi(r_0)((d+2)\underline{W} + 2(d+2)-\alpha)(r_0,t_0) - \varepsilon \beta e^{\beta t_0}\xi(r_0)\\
&\hspace{1cm}+ \varepsilon e^{\beta t_0}\xi(r_0) (r\overline{M}_r + d\overline{M})(r_0,t_0)\\
&\leq \varepsilon e^{\beta t_0} \xi(r_0) ( \sup_{(r,t) \in \mathcal{Q}_{T_0}} \{ (d+2) |\underline{W}(r,t)| + d|\overline{M}(r,t)| + |r\overline{M}_r(r,t)|\} + 2(d+2) - \alpha)\\
&\hspace{1cm} - \varepsilon \beta e^{\beta t_0} \xi(r_0)\\
& < 0.
\end{align*}
This is impossible, so that we must have $t_0 = T_0$. An entirely analogous argument applied to the inequality \eqref{ine3:CP_2}, followed by letting $\varepsilon \to 0$ and then $T_0 \to T$, completes the proof.
\end{proof}

\section{Local-in-time solutions}\label{chap:loc}
In this section, we begin by introducing local existence of classical solutions to \eqref{p} and provide two approaches to the construction of solutions. The first one is based on representing $v = E_d * w$ by the Newtonian potential,
which is particularly useful for establishing Theorem~\ref{th:2}. The second approach relies on exploiting radial symmetry without using the Newtonian representation of $v$. This second construction provides the regularity properties required in the analysis of Theorem~\ref{th:3} and Theorem~\ref{th:3.5}, and Theorem~\ref{th:4}. 
To verify the first existence result, we establish several auxiliary lemmas.
\begin{lemma}[{\cite[Proposition 3.2]{GM1989}}]\label{morrey}
Let $(e^{t\Delta})_{ t\ge0}$ be the heat semigroup on $\R^d$. Then there exists $C > 0$ independent of $t > 0$ such that for all $1 \leq p \leq q \leq \infty$
\begin{align}
\|e^{t\Delta}f\|_{M^q(\R^d)} &\leq Ct^{-\frac{d}{2}(\frac{1}{p}-\frac{1}{q})}\|f\|_{M^p(\R^d)},\label{morrey:1}\\
\|\nabla e^{t\Delta}f\|_{M^q(\R^d)} &\leq Ct^{-\frac{d}{2}(\frac{1}{p}-\frac{1}{q})-\frac{1}{2}}\|f\|_{M^p(\R^d)}\label{morrey:2}
\end{align}
for all $f \in M^p(\R^d)$ and $t > 0$.
\end{lemma}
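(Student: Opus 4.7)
The plan is to cite the now-classical Giga–Miyakawa framework and reduce everything to an elementary Gaussian computation via the scaling invariance of Morrey spaces. First, I would record the scaling identity $\|f(\lambda\,\cdot)\|_{M^p(\R^d)} = \lambda^{-d/p}\|f\|_{M^p(\R^d)}$, which follows immediately from the change-of-variables in the defining integral. Combined with $(e^{t\Delta}f)(\sqrt{t}\,x) = (e^{\Delta}f_{\sqrt{t}})(x)$ where $f_{\sqrt t}(x):=f(\sqrt t\,x)$, this reduces \eqref{morrey:1} to the single statement $\|e^{\Delta}g\|_{M^q} \leq C\|g\|_{M^p}$ for all $1\leq p\leq q\leq\infty$. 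The $t^{-d(1/p-1/q)/2}$ prefactor then appears automatically.

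Next, for the case $t=1$, I would establish the pointwise bound
\begin{align*}
|(e^{\Delta}f)(y)| \leq C\|f\|_{M^p(\R^d)} \qquad\text{for all } y\in\R^d
\end{align*}
by a dyadic decomposition against the Gaussian kernel $G_1$. Writing $(e^{\Delta}f)(y) = \int G_1(y-z)f(z)\,dz$ and splitting $\R^d = B(y,1)\cup\bigcup_{k\geq 1}\{2^{k-1}\leq|y-z|<2^k\}$, the Gaussian gives $G_1(y-z)\leq Ce^{-c\,4^{k-1}}$ on the $k$-th annulus, while the $M^p$-hypothesis gives $\int_{B(y,2^k)}|f|\leq C\,2^{kd(1-1/p)}\|f\|_{M^p}$. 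Summing the geometric-against-super-exponential series gives the desired uniform pointwise bound.

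With the pointwise bound in hand, I would estimate the Morrey average on an arbitrary ball $B(x_0,R)$ in two regimes. For $R\leq 1$, the pointwise bound is used directly and yields $R^{d(1/q-1)}\int_{B(x_0,R)}|e^{\Delta}f|\,dy\leq CR^{d/q}\|f\|_{M^p}\leq C\|f\|_{M^p}$. For $R>1$, I would invoke Fubini to rewrite the integral as $\int|f(z)|\bigl(\int_{B(x_0,R)}G_1(y-z)\,dy\bigr)dz$; on $B(x_0,2R)$ the inner integral is bounded by $1$, and outside a dyadic splitting combined with the Gaussian decay $\leq Ce^{-c\,4^{k}R^2}$ controls the tail. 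Summing via the $M^p$-assumption yields $\int_{B(x_0,R)}|e^{\Delta}f|\,dy\leq CR^{d(1-1/p)}\|f\|_{M^p}$, and multiplication by $R^{d(1/q-1)}$ then uses $p\leq q$ so that the remaining power $R^{d(1/q-1/p)}$ is bounded above by $1$ for $R\geq1$. Taking the supremum over $(x_0,R)$ closes the $M^q$ estimate.

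For \eqref{morrey:2}, the identical scheme applies with $\nabla G_t$ in place of $G_t$. The only new ingredient is the pointwise Gaussian bound $|\nabla G_t(x)|\leq Ct^{-1/2}\widetilde{G}_t(x)$ with $\widetilde{G}_t$ a Gaussian-type kernel of the same order, which produces exactly the extra $t^{-1/2}$ factor after scaling back. The principal obstacle in this plan is organizing the dyadic far-field estimate so that the bound is uniform in $R$; here the super-exponential decay of the Gaussian easily dominates the polynomial growth $2^{kd(1-1/p)}$ coming from the Morrey norm, so no delicate interpolation is needed. Since the claim appears verbatim as \cite[Proposition~3.2]{GM1989}, an alternative clean route is simply to cite that reference and note that the statement transfers immediately from the inhomogeneous Morrey spaces treated there to the homogeneous norm used in the present paper by the same scaling argument above.
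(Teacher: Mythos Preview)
Your proof sketch is correct and follows the standard route (scaling reduction, pointwise Gaussian bound via dyadic annuli, then the $R\le 1$ versus $R>1$ split for the Morrey average). However, the paper itself provides no proof of this lemma at all: it is stated purely as a citation of \cite[Proposition~3.2]{GM1989}, exactly the ``alternative clean route'' you mention at the end. So there is nothing to compare beyond noting that you supply an argument where the paper simply defers to the literature; your final sentence already matches the paper's treatment.
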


We also recall basic estimates for Riesz potentials on Morrey spaces.

\begin{lemma}[{\cite[Proposition 3.1]{GM1989}}]\label{Riesz potential}
The following estimates hold in Morrey spaces:
\begin{enumerate}
\item[\rm{(i)}] if $1 \leq p < d < r \leq \infty$ and $f \in M^p(\R^d)$, then it holds that $\nabla E_d * f \in M^q(\R^d)^d$ and there exists $C > 0$ independent of $f$
such that
\begin{align}\label{Riesz potential:1}
\|\nabla E_d * f\|_{M^q(\R^d)} \leq C\|f\|_{M^p(\R^d)}
\end{align}
with
\begin{align*}
\frac{1}{q} = \frac{1}{p}-\frac{1}{d}
\end{align*}
\item[\rm{(ii)}] if $1 \leq p < d < r \leq \infty$ and $f \in M^p(\R^d) \cap M^r(\R^d)$, then $\nabla E_d * f \in L^\infty(\R^d)^d$ and there exists $C > 0$ independent of $f$ such that
\begin{align}\label{Riesz potential:2}
\|\nabla E_d * f\|_{L^\infty(\R^d)} \leq C \|f\|_{M^p(\R^d)}^\nu \|f\|_{M^r(\R^d)}^{1-\nu}
\end{align}
with 
\begin{align*}
\nu = \dfrac{\frac{1}{d}-\frac{1}{r}}{\frac{1}{p}-\frac{1}{r}}.
\end{align*}
\end{enumerate}
\end{lemma}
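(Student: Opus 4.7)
The plan is to reduce both estimates to a pointwise bound for the Riesz potential of $|f|$, exploiting the decay $|\nabla E_d(x)|\lesssim |x|^{-(d-1)}$ which gives
\begin{equation*}
|\nabla E_d\ast f(x)|\le C\int_{\R^d}\frac{|f(y)|}{|x-y|^{d-1}}\,dy,
\end{equation*}
and then to analyse this integral via a dyadic decomposition at a scale $R>0$ to be optimized differently in (i) and (ii).

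For part (i), I would split the right-hand side at radius $R$ into a near part ($|x-y|\le R$) and a far part ($|x-y|>R$), and then decompose each into dyadic annuli of radii $\sim R 2^{\pm k}$. Using the Morrey definition $\int_{|x-y|<\rho}|f|\le \rho^{d(1-1/p)}\|f\|_{M^p}$ makes the far-part geometric series convergent (since $p<d$) and bounded by $CR^{1-d/p}\|f\|_{M^p}$; the same bound would make the near-part sum diverge, so for that part I would instead use the crude inequality $\int_{|x-y|<\rho}|f|\le C\rho^d\,Mf(x)$ with $M$ the Hardy--Littlewood maximal operator, giving $\le CR\,Mf(x)$. Balancing $RMf(x)$ against $R^{1-d/p}\|f\|_{M^p}$ in $R$ then yields the Adams-type pointwise estimate
\begin{equation*}
|\nabla E_d\ast f(x)|\le C\,\|f\|_{M^p}^{p/d}(Mf(x))^{1-p/d}.
\end{equation*}
Passing to the $M^q$-norm with $1/q=1/p-1/d$ reduces matters to controlling $(Mf)^{1-p/d}$ over balls, which follows from Jensen's inequality (the exponent $1-p/d$ lies in $(0,1)$) together with the Chiarenza--Frasca boundedness $\|Mf\|_{M^p}\le C\|f\|_{M^p}$; the scaling then cancels exactly and delivers \eqref{Riesz potential:1}.

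For part (ii), the key change is the near part: because $r>d$ we have $1-d/r>0$, so the Morrey bound in the scale $r$ is itself summable over the inner dyadic annuli and produces $\le CR^{1-d/r}\|f\|_{M^r}$ without the need for a maximal function. Combined with the unchanged far-part bound this gives
\begin{equation*}
|\nabla E_d\ast f(x)|\le C\bigl(R^{1-d/r}\|f\|_{M^r}+R^{1-d/p}\|f\|_{M^p}\bigr),
\end{equation*}
and choosing $R$ so that $R^{d/r-d/p}=\|f\|_{M^p}/\|f\|_{M^r}$ makes the two terms equal; a direct computation of the resulting exponents recovers precisely $\nu=(1/d-1/r)/(1/p-1/r)$, establishing \eqref{Riesz potential:2}.

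The only nontrivial input in the whole argument is the Morrey boundedness of the Hardy--Littlewood maximal operator needed in (i); once that is granted, the rest is routine dyadic and optimization bookkeeping. Since the statement is quoted verbatim as Proposition~3.1 of \cite{GM1989}, one may in practice simply invoke the reference rather than reproduce the above calculation.
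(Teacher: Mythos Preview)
Your sketch is correct and follows the standard Adams--Hedberg route to Riesz potential bounds in Morrey spaces; the pointwise inequality via the maximal function for (i) and the direct dyadic balancing for (ii) are exactly the classical arguments, and your exponent bookkeeping checks out. However, the paper itself offers no proof at all: the lemma is stated with a bare citation to \cite[Proposition~3.1]{GM1989} and used as a black box, so there is nothing to compare against beyond noting that you have supplied what the paper simply imports. Your closing remark already anticipates this.
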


We construct a local-in-time mild solution of \eqref{p} in critical spaces $M^\frac{d}{4}(\R^d)$ and $M^\frac{d}{2}(\R^d)$.

\begin{proposition}\label{local_1}
Let $d \ge 5$ and $p \in (\frac{d}{3}, \frac{d}{2})$.
Given $(u_0,w_0) \in (M^{\frac{d}{4}}(\R^d) \cap \dot{M}^p(\R^d)) \times (M^\frac{d}{2}(\R^d) \cap M^\frac{dp}{d-2p}(\R^d))$, there exist $T \in (0,\infty]$ and a nonnegative solution $(u,v,w)$ of \eqref{p} such that
\begin{align*}
&u \in C_w([0,T]; M^{\frac{d}{4}}(\R^d) \cap M^p(\R^d)) \cap \{u: (0, T) \to M^r(\R^d); \sup_{0 < t \leq T} t^{\frac{d}{2}(\frac{1}{p}-\frac{1}{r})}\|u(t)\|_{M^r(\R^d)} < \infty\},\\
&w \in C_w([0,T]; M^\frac{d}{2}(\R^d) \cap M^\frac{dp}{d-2p}(\R^d)),
\end{align*}
where $p < r < \frac{dp}{d-p}$, and that $v(\cdot, t) = E_d * w(\cdot, t)$ for all $t \in (0, T)$.
\end{proposition}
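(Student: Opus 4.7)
The plan is to treat \eqref{p} via its mild formulation as a coupled fixed-point problem for $(u,w)$, with $v$ eliminated through $v(\cdot,s) = E_d * w(\cdot,s)$:
\begin{align*}
u(t) &= e^{t\Delta}u_0 - \int_0^t \nabla\cdot e^{(t-s)\Delta}\bigl(u(s)\nabla v(s)\bigr)\, ds =: \Phi_1(u,w)(t),\\
w(t) &= e^{t\Delta}w_0 + \int_0^t e^{(t-s)\Delta} u(s)\, ds =: \Phi_2(u)(t).
\end{align*}
For $T>0$ I would introduce the complete metric space $X_T$ of pairs $(u,w)$ endowed with the Kato-type norm built from the five quantities $\sup_{0<t<T}\|u(t)\|_{M^{d/4}(\R^d)}$, $\sup_{0<t<T}\|u(t)\|_{M^p(\R^d)}$, $\sup_{0<t<T} t^{\frac{d}{2}(\frac{1}{p}-\frac{1}{r})}\|u(t)\|_{M^r(\R^d)}$, $\sup_{0<t<T}\|w(t)\|_{M^{d/2}(\R^d)}$, and $\sup_{0<t<T}\|w(t)\|_{M^{dp/(d-2p)}(\R^d)}$.

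The first step is to place the linear evolution in $X_T$, which is immediate from the Morrey smoothing of Lemma~\ref{morrey}. Here the assumption $u_0 \in \dot{M}^p(\R^d)$ plays its role: it guarantees that $t^{\frac{d}{2}(\frac{1}{p}-\frac{1}{r})}\|e^{t\Delta}u_0\|_{M^r(\R^d)} \to 0$ as $t\to 0$, producing the $T$-smallness needed later to close the contraction on a fixed ball.

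The main analytic step is a bilinear estimate on the Duhamel correction $\Phi_1(u,w) - e^{t\Delta}u_0$. The hypothesis $p > d/3$ is equivalent to $dp/(d-2p) > d$, and this is precisely what is required for Lemma~\ref{Riesz potential}(ii) to apply with the pair $(d/2, dp/(d-2p))$, yielding
$$\|\nabla v(s)\|_{L^\infty(\R^d)} \leq C\|w(s)\|_{M^{d/2}(\R^d)}^{\nu}\|w(s)\|_{M^{dp/(d-2p)}(\R^d)}^{1-\nu}.$$
Combining this with the pointwise bound $\|u\nabla v\|_{M^q(\R^d)} \leq \|\nabla v\|_{L^\infty(\R^d)}\|u\|_{M^q(\R^d)}$ and the gradient Morrey smoothing \eqref{morrey:2}, I would bound $\Phi_1 - e^{t\Delta}u_0$ in each of the three $u$-components of the $X_T$-norm by a bilinear quantity in the $X_T$-norms of $(u,w)$, once it is verified that the resulting time integrals of the form $\int_0^t (t-s)^{-\alpha} s^{-\beta}\, ds$ converge. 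For $\Phi_2 - e^{t\Delta}w_0$, estimate \eqref{morrey:1} with source in $M^r$ and target in $M^{d/2}$ or $M^{dp/(d-2p)}$ yields linear bounds in terms of the weighted $u$-norm; here the upper bound $r < dp/(d-p)$ ensures the relevant smoothing exponents remain strictly below $1$.

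Once these estimates are in hand, Banach's fixed point theorem applied to a closed ball of $X_T$ delivers the mild solution, with $T>0$ chosen small via the $\dot{M}^p$-induced smallness. Nonnegativity of $(u,w)$ follows by a standard Picard iteration starting from nonnegative data combined with positivity of the heat semigroup (or equivalently by mollifying the drift and passing to the limit, exploiting the weak maximum principle for the linearized $u$-equation, whose zeroth-order coefficient is $w \geq 0$), and weak-$\ast$ continuity in time at $t=0$ is obtained by density arguments together with the semigroup smoothing. The main obstacle is the exponent bookkeeping: the Riesz estimate forces $p > d/3$, while simultaneously the heat-smoothing indices must produce integrable-in-$s$ singularities of the form $(t-s)^{-\alpha} s^{-\beta}$ with $\alpha,\beta < 1$ across all five target norms. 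Threading these conditions together is exactly what pins down the admissible window $p \in (d/3, d/2)$ and $r \in (p, dp/(d-p))$ in the statement.
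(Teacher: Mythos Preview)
Your proposal is correct and follows essentially the same approach as the paper: both set up the identical Kato-type five-norm space $\mathcal{X}_T$, eliminate $v$ via $E_d*w$, control $\nabla v$ in $L^\infty$ by Lemma~\ref{Riesz potential}(ii) applied to the pair $(d/2,\,dp/(d-2p))$ (which is where $p>d/3$ enters), combine this with the Morrey heat-smoothing of Lemma~\ref{morrey}, and close by Banach's fixed point theorem using the $\dot M^p$-smallness. The paper's write-up is terser on the $\Phi_2$ estimate and on nonnegativity, but the underlying argument is the same.
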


\begin{remark}
The assumptions of the parameters $p$ and $r$ ensure that 
\begin{align*}
 r < d  < \dfrac{dp}{d-2p}.
\end{align*}
This condition is essential in establishing a local-in-time solution via the Banach fixed-point argument.
\end{remark}

\begin{remark}
In view of \cite{B1995} and \cite[Lemma 2.2]{K1992},
the membership $u_0 \in \dot{M}^p(\R^d)$ is a necessary condition for constructing a local-in-time solution to \eqref{p} in the critical Morrey space. Indeed, when $u_0 \in \dot{M}^p(\R^d)$ it holds that
\begin{align*}
\lim_{t \to 0}t^{\frac{d}{2}(\frac{1}{p}-\frac{1}{r})}\|e^{t\Delta}u_0\|_{M^r(\R^d)} = 0.
\end{align*}
Hence, by choosing $T > 0$ sufficiently small, this term can be made arbitrarily small and therefore controlled within the fixed point framework.

\end{remark}

\begin{remark}
As pointed out in \cite[Section 2 and Lemma 3.1]{K1992}, one generally obtains only weak convergence of $e^{t\Delta}u_0$ and $e^{t\Delta}w_0$ when $u_0 \in M^\frac{d}{4}(\R^d)$ and $w_0 \in M^\frac{d}{2}(\R^d)$. However, for every 
$t > 0$ the heat semigroup is strongly continuous in the corresponding Morrey norms. Moreover we can confirm that the mild solution established by Proposition~\ref{local_1} can be shown to extend to a nonnegative classical solution for all $t > 0$, by employing the iteration method developed in \cite{T1992} and the parabolic maximum principle.
\end{remark}

\begin{proof}
Let $T > 0$. We denote the space $\mathcal{X}_T$ by
\begin{align*}
\mathcal{X}_T &:= C_w([0,T]; M^{\frac{d}{4}}(\R^d) \cap M^p(\R^d))\\
&\hspace{1cm} \cap \{u: (0, T) \to M^r(\R^d); \sup_{0 < t \leq T} t^{\frac{d}{2}(\frac{1}{p}-\frac{1}{r})}\|u(t)\|_{M^r(\R^d)} < \infty\}\\
&\hspace{0.5cm}\times C_w([0,T]; M^\frac{d}{2}(\R^d) \cap M^\frac{dp}{d-2p}(\R^d))
\end{align*}
with the natural norm
\begin{align*}
\|(u,w)\|_{\mathcal{X}_T} &:= \sup_{0 \leq t \leq T}\|u(t)\|_{M^\frac{d}{4}(\R^d)} + \sup_{0 \leq t \leq T}\|u(t)\|_{M^p(\R^d)} + \sup_{0 < t \leq T} t^{\frac{d}{2}(\frac{1}{p}-\frac{1}{r})}\|u(t)\|_{M^r(\R^d)}\\
&\hspace{1cm} + \sup_{0 \leq t \leq T}\|w(t)\|_{M^\frac{d}{2}(\R^d)} + \sup_{0 \leq t \leq T}\|w(t)\|_{M^\frac{dp}{d-2p}(\R^d)}.
\end{align*}
Let $(u, w) \in \mathcal{X}_T$, we denote the mapping $\Phi (u,w)$ by
\begin{align*}
\Phi ((u,w))(t) 
&:= \begin{pmatrix} e^{t\Delta}u_0 - \displaystyle \int_0^t \nabla \cdot e^{(t-s)\Delta}(u\nabla E_d * w)(s) ds\\[1ex]
e^{t\Delta}w_0 + \displaystyle \int_0^t e^{(t-s)\Delta}u(s)ds \end{pmatrix}.
\end{align*}
Since the heat semigroup acts continuously on the Morrey spaces involved in the definition of $\mathcal{X}_T$, the term on the right-hand side belongs to $\mathcal{X}_T$.
The Morrey estimate \eqref{morrey:1} guarantees that $e^{t\Delta }u_0$ and $e^{t\Delta}w_0$ lie in $\mathcal{X}_T$. Moreover, invoking the conclusion of \cite[Lemma 2.2]{K1992}, one can choose $T > 0$ sufficiently small so that the term:
\begin{align*}
\sup_{0 < t \leq T} t^{\frac{d}{2}(\frac{1}{p}-\frac{1}{r})}\|e^{t\Delta}u_0\|_{M^r(\R^d)}
\end{align*}
remain arbitrarily small. Hence it suffices to estimate the Duhamel term. From \eqref{morrey:2} and \eqref{Riesz potential:2} we get
\begin{align*}
&\left\|\int_0^t \nabla \cdot e^{(t-s)\Delta}(u\nabla E_d * w)(s) ds\right\|_{M^\frac{d}{4}\cap M^p(\R^d)}\\
 &\leq 
C\int_0^t (t-s)^{-\frac{1}{2}}\|u(s)\|_{M^\frac{d}{4}\cap M^p(\R^d)} \|\nabla E_d*w(s)\|_{L^\infty(\R^d)}ds\\
&\leq C\sup_{0\leq t\leq T}\|u(t)\|_{M^\frac{d}{4}\cap M^p(\R^d)}\int_0^t (t-s)^{-\frac{1}{2}}\|w(s)\|_{M^\frac{d}{2}(\R^d)}^\nu\|w(s)\|_{M^\frac{dp}{d-2p}(\R^d)}^{1-\nu} ds\\
&\leq C\sup_{0\leq t\leq T}\|u(t)\|_{M^\frac{d}{4}\cap M^p(\R^d)}\sup_{0\leq t\leq T}\|w(t)\|_{M^\frac{d}{2}(\R^d)}^\nu\sup_{0\leq t\leq T}\|w(t)\|_{M^\frac{dp}{d-2p}(\R^d)}^{1-\nu}T^\frac{1}{2},
\end{align*}
where $\nu = \frac{3p-d}{4p-d} \in (0,1) $. The assumption $r < \frac{dp}{d-p}$ enables us to carry out an analogous argument and to obtain that
\begin{align*}
&t^{\frac{d}{2}(\frac{1}{p}-\frac{1}{r})}\left\|\int_0^t \nabla \cdot e^{(t-s)\Delta}(u\nabla E_d * w)(s) ds\right\|_{M^r(\R^d)}\\
&\leq Ct^{\frac{d}{2}(\frac{1}{p}-\frac{1}{r})} \int_0^t (t-s)^{-\frac{1}{2}} \|u(s)\|_{M^r(\R^d)} \|\nabla E_d * w(s)\|_{L^\infty(\R^d)} ds\\
&\leq C\sup_{0 < t \leq T}t^{\frac{d}{2}(\frac{1}{p}-\frac{1}{r})}\|u(t)\|_{M^r(\R^d)}\sup_{0\leq t\leq T}\|w(t)\|_{M^\frac{d}{2}(\R^d)}^\nu\sup_{0\leq t\leq T}\|w(t)\|_{M^\frac{dp}{d-2p}(\R^d)}^{1-\nu}T^{\frac{1}{2}-\frac{d}{2}(\frac{1}{p}-\frac{1}{r})}.
\end{align*}
Our assumption on the parameters is precisely tuned to the critical regime, and therefore we can repeat the argument for the second term and obtain the corresponding estimate. Therefore employing the Banach fixed point argument, we construct local-in-time solutions of \eqref{p}. 
\end{proof}

\begin{corollary}\label{local_1.5}
Let $d \ge 5$ and $p \in (\frac{d}{3}, \frac{d}{2})$, and $q \in (\frac{d}{2}, d)$. Assume that the initial data $(u_0,w_0)$ is nonnegative and satisfies
\begin{align*}
(u_0, w_0) \in (C(\R^d) \cap L^\infty(\R^d) \cap L^p(\R^d)) \times (C(\R^d) \cap L^\infty(\R^d) \cap L^q(\R^d)).
\end{align*}
Then there exist $T_\mathrm{max} \in (0,\infty]$ and a nonnegative classical solution $(u,v,w)$ of \eqref{p} such that $v(\cdot, t) = E_d * w(\cdot, t)$ for $t \in (0,T_\mathrm{max})$ and $\nabla v \in L^\infty_{loc}([0,T_\mathrm{max}); L^\infty(\R^d)^d)$, and 
\begin{equation*}
\begin{cases}
u \in C(\R^d \times [0,T_\mathrm{max})) \cap C([0,T_\mathrm{max}); L^p(\R^d)) \cap
C^{\infty}(\R^d \times (0,T_\mathrm{max})),\\
v \in C^{\infty}(\R^d \times (0,T_\mathrm{max})),\\
w \in C(\R^d \times [0,T_\mathrm{max})) \cap C([0,T_\mathrm{max}); L^q(\R^d)) \cap C^{\infty}(\R^d \times (0,T_\mathrm{max})).
\end{cases}
\end{equation*}
Moreover, if $T_\mathrm{max} < \infty$, then
\begin{align*}
\limsup_{t \to T_\mathrm{max}}(\|u(t)\|_{L^\infty(\R^d)} + \|u(t)\|_{L^p(\R^d)} + \|w(t)\|_{L^\infty(\R^d)} + \|w(t)\|_{L^q(\R^d)}) = \infty.
\end{align*}
\end{corollary}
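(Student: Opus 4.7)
The strategy is to adapt the Banach fixed-point argument of Proposition~\ref{local_1}, replacing the scale-critical Morrey spaces by subcritical Lebesgue spaces tailored to the additional regularity of the initial data. I work in the Banach space
\begin{align*}
\mathcal{Z}_T := C([0,T];L^\infty(\R^d)\cap L^p(\R^d))\times C([0,T];L^\infty(\R^d)\cap L^q(\R^d))
\end{align*}
endowed with the natural norm, and seek a fixed point of the Duhamel mapping
\begin{align*}
\Phi((u,w))(t) := \begin{pmatrix} e^{t\Delta}u_0 - \int_0^t \nabla\cdot e^{(t-s)\Delta}(u\nabla v)(s)\,ds \\[1ex] e^{t\Delta}w_0 + \int_0^t e^{(t-s)\Delta}u(s)\,ds \end{pmatrix},\qquad v=E_d*w.
\end{align*}
The condition $q\in(\tfrac{d}{2},d)$ is precisely what makes Lemma~\ref{Riesz potential}(ii) applicable through the embeddings $L^q\hookrightarrow M^q$ and $L^\infty=M^\infty$: it yields the pointwise bound
\begin{align*}
\|\nabla v(t)\|_{L^\infty(\R^d)} \leq C\,\|w(t)\|_{L^q(\R^d)}^\nu\,\|w(t)\|_{L^\infty(\R^d)}^{1-\nu}
\end{align*}
with $\nu=q/d\in(0,1)$, so that the full drift $\nabla v$ lies in $L^\infty_{\mathrm{loc}}([0,T);L^\infty)$.

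Combining this with the elementary semigroup estimates $\|\nabla e^{t\Delta}f\|_{L^r}\leq Ct^{-1/2}\|f\|_{L^r}$ and $\|e^{t\Delta}f\|_{L^r}\leq \|f\|_{L^r}$, valid for every $r\in[1,\infty]$, and observing that $L^\infty\cap L^p\hookrightarrow L^q$ by interpolation (since $p<q$), one obtains, for $r\in\{p,\infty\}$ and $r\in\{q,\infty\}$ respectively,
\begin{align*}
\left\|\int_0^t\nabla\cdot e^{(t-s)\Delta}(u\nabla v)(s)\,ds\right\|_{L^r}\leq CT^{1/2}\|(u,w)\|_{\mathcal{Z}_T}^2,\qquad
\left\|\int_0^t e^{(t-s)\Delta}u(s)\,ds\right\|_{L^r}\leq CT\,\|(u,w)\|_{\mathcal{Z}_T}.
\end{align*}
Hence $\Phi$ contracts on a suitable closed ball of $\mathcal{Z}_T$ for $T$ sufficiently small, producing a unique mild solution. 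The continuity assumption $u_0,w_0\in C(\R^d)$ is invoked at this stage, via the Duhamel formula, to upgrade $L^\infty$-boundedness to pointwise space-time continuity $u,w\in C(\R^d\times[0,T])$ up to $t=0$.

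The promotion to a smooth, nonnegative classical solution then proceeds by a standard parabolic bootstrap. Since $\nabla v\in L^\infty_{\mathrm{loc}}([0,T);L^\infty)$, the first equation of \eqref{p} is a linear parabolic equation with bounded drift, so Schauder theory gives $u\in C^{2,1}(\R^d\times(0,T))$; elliptic regularity for the Poisson equation satisfied by $v$ and parabolic regularity for $w$ then propagate this gain, and iterating the bootstrap yields $u,v,w\in C^\infty(\R^d\times(0,T))$. Nonnegativity of $(u,w)$ follows from a standard parabolic maximum-principle comparison of $(u,w)$ with the trivial subsolution $(0,0)$, in the spirit of Lemma~\ref{CP_1}. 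Finally, the maximal existence time $T_\mathrm{max}$ and the claimed blow-up alternative are established by the usual continuation argument: if $T_\mathrm{max}<\infty$ while the four norms $\|u(t)\|_{L^\infty},\|u(t)\|_{L^p},\|w(t)\|_{L^\infty},\|w(t)\|_{L^q}$ remained bounded as $t\to T_\mathrm{max}$, the local fixed-point construction could be restarted at times close to $T_\mathrm{max}$ and would extend the solution past $T_\mathrm{max}$, contradicting the maximality.

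The principal technical obstacle is the absence of strong continuity of the heat semigroup on $L^\infty$ at $t=0$ for merely bounded initial data, i.e.\ $\|e^{t\Delta}u_0-u_0\|_{L^\infty}\not\to 0$ in general. The hypothesis $u_0,w_0\in C(\R^d)$ is used precisely to replace $L^\infty$-norm continuity at $0$ by pointwise space-time continuity, while continuity in $L^p$ and $L^q$ at $0$ is standard. A secondary care is needed in aligning the interpolation exponent $\nu$ of Lemma~\ref{Riesz potential}(ii) with the prescribed ranges $p\in(\tfrac{d}{3},\tfrac{d}{2})$ and $q\in(\tfrac{d}{2},d)$ so that the resulting contraction factor really behaves as $T^{1/2}$ rather than being scale invariant, which is what guarantees the smallness needed for the fixed-point step.
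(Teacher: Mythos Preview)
Your proposal is correct and follows essentially the same approach as the paper: a Banach fixed-point argument for the Duhamel formulation, using Lemma~\ref{Riesz potential}(ii) with the pair $(q,\infty)$ to control $\|\nabla v\|_{L^\infty}$, followed by a parabolic bootstrap for smoothness and the maximum principle for nonnegativity. The paper's own proof is terse (it simply refers back to Proposition~\ref{local_1} together with \cite[Lemma~2.1]{W2023_1} and \cite{L1995} for continuity at $t=0$), and you have fleshed out precisely the details it leaves implicit, including the correct identification of the $C(\R^d)$ hypothesis as the remedy for the failure of strong $L^\infty$-continuity of $e^{t\Delta}$ at $t=0$.
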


\begin{proof}
By the argument in the proof of Proposition~\ref{local_1} combination with \cite[Lemma 2.1]{W2023_1},
we can construct a mild solution on a short time interval.
Moreover, the continuity at $t = 0$ is guaranteed by the fact that the heat semigroup preserves pointwise continuity at $t = 0$ when $(u_0, w_0) \in (C(\R^d) \cap L^\infty(\R^d))^2$ (see \cite{L1995}). The nonnegativity of the solution follows from the parabolic maximum principle together with the nonnegativity of the initial data.
\end{proof}

We next show the existence of local-in-time solutions to \eqref{p} by employing a different approach from the one used above.
The key ingredient here is Lemma~\ref{CP_1}.
More precisely, we construct an ODE-type supersolution of \eqref{p}, to which Lemma~\ref{CP_1} can be applied.

\begin{proposition}\label{local_2}
Let $d \ge 5$ and $(u_0,w_0) \in (C(\R^d) \cap L^\infty(\R^d))^2$ with the radial symmetry and nonnegativity $(u_0 \not \equiv 0).$ Then there exist $T = T(\|u_0\|_{L^\infty(\R^d)}, \|w_0\|_{L^\infty(\R^d)}) > 0$ and a local-in-time solution $(u,v,w)$ in the classical sense of \eqref{p}
\begin{equation*}
\begin{cases}
u \in C(\R^d \times [0,T]) \cap C^{\infty}(\R^d \times (0,T)),\\
v \in C^{\infty}(\R^d \times (0,T)),\\
w \in C(\R^d \times [0,T]) \cap C^{\infty}(\R^d \times (0,T)).
\end{cases}
\end{equation*}
Moreover, $u$ and $w$ are nonnegative in $\R^d \times [0,T]$.

\end{proposition}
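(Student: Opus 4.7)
The plan is to construct the solution in three conceptual steps: build a scalar ODE system whose solution serves as a spatially constant supersolution of the radial form of \eqref{p}, obtain local-in-time existence of a classical solution on a short time interval via a contraction argument, and then combine these ingredients through Lemma~\ref{CP_1} to propagate the $L^\infty$ bound up to a time $T$ depending only on $\|u_0\|_{L^\infty(\R^d)}$ and $\|w_0\|_{L^\infty(\R^d)}$.

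For the supersolution I would take spatially constant candidates $(\bar{z}(t), \bar{y}(t))$, so that all radial derivatives in the hypotheses of Lemma~\ref{CP_1} drop out. It then suffices that the pair satisfies
\begin{equation*}
\bar{z}'(t) \geq \bar{z}(t)\bar{y}(t),\qquad \bar{y}'(t) \geq \bar{z}(t),
\end{equation*}
with $\bar{z}(0) > \|u_0\|_{L^\infty(\R^d)}$ and $\bar{y}(0) > \|w_0\|_{L^\infty(\R^d)}$. Taking equality in the above, Picard--Lindel\"{o}f yields a unique $C^1$ solution on a maximal interval $[0, T_\ast)$, with $T_\ast$ monotonic in the initial data, so one can choose $T = T(\|u_0\|_{L^\infty(\R^d)}, \|w_0\|_{L^\infty(\R^d)}) \in (0, T_\ast)$ on which $(\bar{z}, \bar{y})$ remains finite.

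For the local theory I would construct a classical solution $(u, v, w)$ on a short initial interval $[0, T_1]$ by a Banach fixed point in $C([0, T_1]; C_b(\R^d))^2$. The map is $(\tilde u, \tilde w) \mapsto (u, w)$, where $w$ solves the linear heat equation with source $\tilde u$ via Duhamel's formula, $v$ is recovered from $w$ through the radial integral formula \eqref{integral formula}, and $u$ solves the resulting linear parabolic equation $u_t = \Delta u - \nabla \cdot (u \nabla v)$; even though $\nabla v$ grows at most linearly in $|x|$, the standard parabolic theory for linear equations with linearly growing drift and bounded zeroth-order coefficient yields a unique bounded classical solution, and contractivity holds for $T_1$ small enough. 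Interior $C^\infty$ regularity then follows by parabolic bootstrap. With a classical solution in hand on $[0, T_1]$, I apply Lemma~\ref{CP_1} with $(\underline{z}, \underline{y}) = (u, w)$ (viewed radially) and $(\bar{z}, \bar{y})$ the ODE supersolution, taking $h(r,t) := -v_r(r,t)/r$ as the coefficient function, to conclude $u \leq \bar{z}$ and $w \leq \bar{y}$ on $[0, T_1]$. Since $(\bar{z}, \bar{y})$ stays finite on all of $[0, T]$, this $L^\infty$ bound can be fed back into the local theory to extend the solution step by step up to time $T$. Nonnegativity is preserved throughout by the parabolic maximum principle applied to the first and third equations separately with nonnegative initial data.

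The main obstacle I anticipate is the linear growth of $\nabla v$ in $|x|$ caused by only having an $L^\infty$ (and not decaying) control on $w$: this rules out the mild formulation used for Proposition~\ref{local_1}, where Riesz-potential estimates on $\nabla E_d \ast w$ in Morrey norms take care of the same difficulty. Here the linear-in-$r$ drift has to be handled pointwise, and this is exactly where the weight $\xi(r) = r^2+1$ built into the proof of Lemma~\ref{CP_1} is decisive: it is tailored to absorb the linearly growing contribution $rh(r,t)$. Verifying the boundedness hypotheses of Lemma~\ref{CP_1} on each intermediate interval $[0, T_0] \subset [0, T)$, both for $(\underline{z}, \underline{y})$ and for $h$, is the technical point that requires the most care.
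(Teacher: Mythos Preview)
Your overall plan matches the paper's in its two main ingredients: the spatially constant ODE supersolution $(F,G)$ satisfying $F'=FG$, $G'=F$, and the comparison via Lemma~\ref{CP_1} with $h=-v_r/r$ (which, as you correctly note, is bounded by $\|w\|_{L^\infty}/d$ thanks to the radial integral formula). The gap is in your local existence step. Asserting that ``standard parabolic theory for linear equations with linearly growing drift'' delivers a bounded classical solution and, on top of that, a contraction in $C([0,T_1];C_b(\R^d))$ is not justified: the usual Duhamel estimates and contraction arguments rely on $\nabla v\in L^\infty$, and once the drift is unbounded both uniqueness in $C_b$ and Lipschitz dependence of the solution map on $(\tilde u,\tilde w)$ require separate, nontrivial input (Ornstein--Uhlenbeck type semigroup theory, weighted spaces, or similar). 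This is precisely the difficulty you flag at the end, but you do not actually resolve it.

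The paper circumvents this entirely. It approximates $(u_0,w_0)$ by compactly supported radial data $(u_0^j,w_0^j)$ with $0\le u_0^j\le u_0$ and $0\le w_0^j\le w_0$. For each $j$, the decay puts the data in the framework of Corollary~\ref{local_1.5}, which produces a classical solution $(u^j,v^j,w^j)$ with $v^j=E_d\ast w^j$ and $\nabla v^j\in L^\infty_{\mathrm{loc}}([0,T_j);L^\infty(\R^d)^d)$, so the mild formulation and its contraction work as usual. Lemma~\ref{CP_1} then gives $u^j\le F$ and $w^j\le G$ uniformly in $j$, forcing $T_j=T$. Parabolic Schauder estimates plus Arzel\`a--Ascoli yield a convergent subsequence whose limit $(u,v,w)$ is the desired classical solution; here $v$ is recovered not as a Newton potential of the limit but from the limit of $(v^j)_r$, which is locally uniformly bounded by the radial formula. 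Continuity at $t=0$ is inherited from the approximants. This route never solves a linear equation with unbounded drift directly; it only passes to a limit through a family of uniformly bounded classical solutions, which is the idea your proposal is missing.
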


\begin{proof}
Since $(u_0,w_0) \in L^\infty(\R^d)^2$, we can take a sufficiently large $M$ such that
\begin{align*}
\|u_0\|_{L^\infty(\R^d)} \leq M,\quad \|w_0\|_{L^\infty(\R^d)} \leq \sqrt{2M}.
\end{align*}
We denote $T = T(M)$ by
\begin{align*}
T =\dfrac{1}{\sqrt{2M}}.
\end{align*}
Let $(F,G)$ be the solution to the ODE system
\begin{equation}\label{ODE}
\begin{cases}
F_t = FG\qquad &\mathrm{for}\ t \in (0,T),\\
G_t = F\qquad &\mathrm{for}\ t \in (0,T),\\
F(0) = M, G(0) = \sqrt{2M}.
\end{cases}
\end{equation}
A direct computation shows that
\begin{align*}
F(t) = \dfrac{M}{(1-\frac{1}{2}\sqrt{2M}t)^2},\quad G(t) = \dfrac{\sqrt{2M}}{1-\frac{1}{2}\sqrt{2M}t}
\end{align*}
Moreover, throughout the interval $t \in [0,T]$, the pair $(F,G)$ enjoys the uniform bound:
\begin{align}\label{ODE:bound}
0 \leq F(t) \leq 4M,\quad 0 \leq G(t) \leq 2 \sqrt{2M}.
\end{align}
According to the method by \cite[Lemma 3.1]{W2023_2}, we approximate $u_0$ and $w_0$ by a sequence of compactly supported, radial functions $\{u_0^j\}_{j \in \N} \subset C(\R^d)$ and $\{w_0^j\}_{j \in \N} \subset C(\R^d)$ such that
\begin{align*}
u_0^j = u_0,\quad w_0^j = w_0\quad\mathrm{in}\ B_j,\quad \mathrm{supp}\ u_0^j \subset B_{j + 1}, \quad \mathrm{supp}\ w_0^j \subset B_{j + 1}
\end{align*}
and for all $j \in \N$
\begin{align*}
0 \leq u_0^j \leq u_0,\quad 0 \leq w_0^j \leq w_0\quad \mathrm{in}\ \R^d.
\end{align*}
For each $j \in \N$, Corollary~\ref{local_1.5} ensures that there exist $T_j \in (0,T]$ and a nonnegative classical solution $(u^j,v^j,w^j)$ of \eqref{p} such that $v^j(\cdot, t) = E_d * w^j(\cdot, t)$ for $t \in (0,T_j]$. Moreover if $T_j < T$, then
\begin{align}\label{ODE:1}
\limsup_{t \to T_j}(\|u^j(t)\|_{L^\infty(\R^d)} + \|w^j(t)\|_{L^\infty(\R^d)}) = \infty.
\end{align}
Here we employ the parabolic Schauder theory together with the Arzel\`{a}--Ascoli theorem to extract a convergent subsequence of $(u^j,w^j)$ and to identify its limit $(u,w)$. In particular, we also obtain the limit of $(v^j)_r$, denoted by $f$. To carry out this compactness argument, it is essential to ensure that $(u^j,w^j)$ and $(v^j)_r$ admit uniform bounds for $j \in \N$ and the existence time $T_j$ satisfy $T_j = T$.
These requirements follow from the regularity properties of $(u^j,w^j)$ and $(v^j)_r$ obtained in Corollary~\ref{local_1.5} and the initial condition. Indeed, since the solution $(F,G)$ of \eqref{ODE} serves as a supersolution to the radial formulation of the system \eqref{p}, Lemma~\ref{CP_1} yields that
\begin{align*}
u^j(r,t) \leq F(t) \leq 4M,\quad w^j(r,t) \leq G(t) \leq 2\sqrt{2M}\quad \mathrm{for\ all}\ (r,t) \in [0,\infty) \times [0,T_j).
\end{align*}
Therefore, both $u^j$ and $w^j$ are uniformly bounded in $j$, which implies that $T_j$ must be $T$ due to \eqref{ODE:1}. Moreover, thanks to the radial symmetry, we can write
\begin{align*}
(v^j)_r = -r^{1-d}\int_0^r \rho^{d-1}w^j(\rho,t) d\rho\quad \mathrm{for\ all}\ (r,t) \in (0,\infty) \times (0,T_j).
\end{align*}
This representation immediately yields that $(v^j)_r$ is locally uniformly bounded on $[0,\infty)$,  with a bound that does not depend on $j$. Consequently, the Schauder theory and the Arzel\`{a}--Ascoli theorem allow us to pass to the limit and obtain a classical radially symmetric solution $(u,v,w)$.
Here the function $v$ is defined by
\begin{align*}
v(x,t) := \int_0^{|x|} f(r,t) dr\quad \mathrm{for\ all}\ (x,t) \in \R^d \times (0,T),
\end{align*}
where $f$ denotes the limit of $(v^j)_r$. Finally, continuity at $t = 0$ follows directly from the argument in \cite[Lemma 3.1]{W2023_2}.
\end{proof}

\begin{remark}\label{local_2:rem}
According to Proposition~\ref{local_2}, one can conclude that there exits a maximal existence time $T_\mathrm{max} \in (0,\infty]$ such that either $T_\mathrm{max} = \infty$, or $T_\mathrm{max} < \infty$ and 
\begin{align*}
\limsup_{t \to T_\mathrm{max}}\{\|u(t)\|_{L^\infty(\R^d)} + \|w(t)\|_{L^\infty(\R^d)}\} = \infty.
\end{align*}
\end{remark}

We next show that any radially symmetric solution of \eqref{p} preserves the monotonicity in the radial variable 
$r > 0$, provided that the initial data are radially monotone. This property, established in Lemma~\ref{CP_4}, is pivotal in the analysis of blow-up solutions to \eqref{p}.

\begin{lemma}\label{CP_4}
Let $d \ge 5$ and $T > 0$, and let $(u_0,w_0)$ be a pair of radially nonincreasing functions.
Let $(u,v,w)$ denote the corresponding radially symmetric classical solution in $\R^d \times (0,T)$ given by Proposition~\ref{local_2}.
Then $(u,w)(\cdot, t)$ remains radially nonincreasing in $r > 0$ for each $t \in [0,T)$.

\end{lemma}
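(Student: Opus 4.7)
The strategy is to show $p := -u_r \ge 0$ and $q := -w_r \ge 0$ throughout $[0, \infty) \times [0, T)$ via a first-touch maximum principle, after reducing the derivative equations to a non-singular cooperative system in effective dimension $d + 2$. By a standard mollification of $(u_0, w_0)$ preserving radial nonincreasing-ness (convolution with a symmetric decreasing smooth kernel), combined with the continuous dependence of the local solution on initial data provided by Proposition~\ref{local_2}, I may assume $(u_0, w_0)$ is smooth so that $p(\cdot, 0), q(\cdot, 0) \ge 0$ are continuous. Differentiating the radial version of \eqref{p} in $r$ and using $v_{rr} = -w - \tfrac{d-1}{r} v_r$ yields
\begin{align*}
p_t &= \Delta p - \tfrac{d-1}{r^2} p - v_r p_r + \Bigl(2w + \tfrac{d-1}{r} v_r\Bigr) p + u q,\\
q_t &= \Delta q - \tfrac{d-1}{r^2} q + p,
\end{align*}
valid for $(r, t) \in (0, \infty) \times (0, T)$ with $p(0, t) = q(0, t) = 0$ by radial symmetry. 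Since $u \ge 0$, the couplings $uq$ and $p$ enter with nonnegative coefficients, making the system cooperative.

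To eliminate the Hardy-type singular term $-\tfrac{d-1}{r^2}$ (which obstructs a direct maximum principle in dimension $d$), I use the substitution $p = r \tilde p$, $q = r \tilde q$. This is legitimate because $p, q$ vanish to first order at $r = 0$, and $\tilde p, \tilde q$ extend smoothly to even functions of $r$. The identity $[\Delta_d - \tfrac{d-1}{r^2}](r f) = r \Delta_{d+2} f$ (direct computation) converts the system to
\begin{align*}
\tilde p_t &= \Delta_{d+2} \tilde p - v_r \tilde p_r + \Bigl(2w + \tfrac{d-2}{r} v_r\Bigr) \tilde p + u \tilde q,\\
\tilde q_t &= \Delta_{d+2} \tilde q + \tilde p,
\end{align*}
a cooperative parabolic system in the effective dimension $d + 2$. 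Here $v_r / r$ is bounded because $v_r(0) = 0$ and $v \in C^4$, and by the $L^\infty$ bounds on $u, w$ from Proposition~\ref{local_2}, all zero-order and coupling coefficients are bounded on $[0, \infty) \times [0, T_0]$ for any $T_0 < T$; only the transport coefficient $v_r$ may grow linearly in $r$, but it enters only multiplied by $\tilde p_r$.

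I then fix $T_0 < T$ and consider the perturbations $\tilde p^\varepsilon := \tilde p + \varepsilon e^{C t}(1 + r^2)$ and $\tilde q^\varepsilon := \tilde q + \varepsilon e^{C t}(1 + r^2)$. For $C$ sufficiently large to dominate $\Delta_{d+2}(1 + r^2) = 2(d+2)$, the bounded zero-order and coupling coefficients, and the quadratic-in-$r$ contribution $|v_r \cdot 2r|$ (which is $\lesssim r^2$ since $|v_r| \lesssim r$), the pair $(\tilde p^\varepsilon, \tilde q^\varepsilon)$ becomes a strict supersolution that is strictly positive at $t = 0$. Suppose positivity is first lost at some point $(r_0, t_0) \in [0, \infty) \times (0, T_0]$; the barrier $1 + r^2 \to \infty$ guarantees $r_0 < \infty$. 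Without loss of generality take $\tilde p^\varepsilon(r_0, t_0) = 0$ with $\tilde q^\varepsilon(\cdot, t_0) \ge 0$ (the other case is handled by the simpler $\tilde q$-equation). At this interior minimum, $\partial_t \tilde p^\varepsilon \le 0$, $\partial_r \tilde p^\varepsilon = 0$, and $\Delta_{d+2} \tilde p^\varepsilon \ge 0$, so the LHS of the $\tilde p^\varepsilon$-equation is $\le 0$; cooperativity ($u \tilde q^\varepsilon \ge 0$) together with the strictly positive source produced by the perturbation makes the RHS strictly positive, a contradiction. Letting $\varepsilon \to 0$ and then $T_0 \to T$ gives $\tilde p, \tilde q \ge 0$, i.e., $u_r, w_r \le 0$ on $[0, \infty) \times [0, T)$.

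The main obstacle I anticipate is the Hardy-type singular term $-\tfrac{d-1}{r^2}$ in the derivative equations, which rules out a direct maximum principle in dimension $d$; it is handled cleanly by the observation that the radial derivative of a smooth radial function on $\R^d$ behaves naturally as a smooth radial function on $\R^{d+2}$, realized by the substitution $p = r\tilde p$. The secondary technical issue, the possibly unbounded transport coefficient $v_r$, is absorbed by choosing the quadratic barrier $1 + r^2$ together with sufficiently large $C$.
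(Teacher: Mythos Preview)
Your argument is correct and follows the same overall strategy as the paper: differentiate the radial system, recognize the resulting cooperative structure in $(u_r,w_r)$, and run a first-touch argument with an $\varepsilon e^{Ct}(1+r^2)$ perturbation. The genuine difference lies in how the Hardy potential $-\tfrac{d-1}{r^2}$ is treated. You remove it cleanly by the substitution $p=r\tilde p$, $q=r\tilde q$, which converts $\Delta_d-\tfrac{d-1}{r^2}$ into $\Delta_{d+2}$ and leaves only bounded zero-order coefficients; the linear growth of $v_r$ is then absorbed by the quadratic barrier. The paper instead keeps the original $d$-dimensional equations and exploits two facts at the touching point: first, that $r_0>0$ (since $u_r(0,t)=0$ forces the perturbed function to be strictly negative at the origin), so the Hardy term is finite there; second, that this Hardy term carries the favorable sign and can simply be dropped, while the remaining $v_{rr}$ contributions are rewritten via $v_{rr}+\tfrac{d-1}{r}v_r=-w$ and the sign $v_r\le 0$ is used explicitly. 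Your dimension-shift is the more systematic device and avoids ever invoking $v_r\le 0$; the paper's route is more hands-on but sidesteps the change of variables. Both close by the same $\varepsilon\to 0$, $T_0\to T$ limits.
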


\begin{proof}
Regardless of the monotonicity of the initial data $(u_0,w_0)$, the second component $v$ is radially nonincreasing in $r > 0$. Indeed, by exploiting the radial symmetry and the nonnegativity of $w$, we integrate the second equation of \eqref{p} over $(0,r)$ for any $r \in (0,\infty)$ to obtain
\begin{align}
r^{d-1} v_r(r,t) = -\int_0^r \rho^{d-1}w(\rho,t) d\rho \leq 0 \quad \mathrm{for\ all}\ (r,t) \in (0,\infty) \times (0,T),\label{v_r:bdd}
\end{align}
which directly yields $v_r \leq 0$ for all $(r,t) \in (0,\infty) \times (0,T)$. Setting $(u_r,w_r) = (\tilde{u},\tilde{w})$ for all $(r,t) \in (0,\infty) \times (0,T)$, in view of the radial symmetry and the radial version of \eqref{p}, we derive the following parabolic system for $\tilde{u}, \tilde{w}$:
\begin{equation}\label{ine1:CP_4}
\begin{cases}
\tilde{u}_t = \tilde{u}_{rr} + \left(\frac{d-1}{r}-v_r\right)\tilde{u}_r + \left(-\frac{d-1}{r^2}-2v_{rr} - \frac{d-1}{r}v_r\right) \tilde{u}- \left(v_{rrr} + \frac{d-1}{r}v_{rr}- \frac{d-1}{r^2}v_r\right) u,\\
\tilde{w}_t = \tilde{w}_{rr} + \frac{d-1}{r}\tilde{w}_{r} - \frac{d-1}{r^2}\tilde{w} + \tilde{u}.
\end{cases}
\end{equation}
Here we fix $T_0 < T$ and we denote the function $g$ and $h$ by
\begin{align*}
g(r,t) := e^{-\alpha t}\tilde{u}(r,t) - \varepsilon e^{\beta t}\xi(r),\quad h(r,t) :=  e^{-\alpha t}\tilde{w}(r,t)- \varepsilon e^{\beta t}\xi(r)
\end{align*}
for $(r,t) \in [0,\infty) \times [0,T_0)$, where $\xi(r) = r^2 + 1$ for all $r \ge 0$ and the parameters $\alpha$ and $\beta$ are chosen such that 
\begin{align*}
\alpha = 4\sup_{(r,t) \in (0,\infty) \times (0,T_0)} w(r,t) + 2d,\quad \beta > 1.
\end{align*}
In light of Remark~\ref{local_2:rem}, the parameter $\alpha$ is finite.
Assume to the contrary that there exists $(r_0,t_0) \in [0, \infty) \times (0,T_0)$ such that
$g(r_0,t_0) = 0$. Then it necessarily follows that $g < 0$ for $(r,t) \in [0,\infty) \times [0,t_0)$ since $(r_0,t_0)$ is a point where $g$ attains zero at the earliest time. Since we obtain from the second equation of \eqref{ine1:CP_4} that for all $(r,t) \in (0,\infty) \times (0,t_0)$
\begin{align*}
&h_t - h_{rr} - \dfrac{d-1}{r}h_r + \left(\frac{d-1}{r^2} + \alpha\right)h\\
&\hspace{1cm}= g(r,t) -\alpha \varepsilon e^{\beta t}\xi(r) - \varepsilon (\beta -1)e^{\beta t}\xi(r) + 2d\varepsilon e^{\beta t} - \dfrac{d-1}{r^2}\varepsilon e^{\beta t} \xi(r)\\
&\hspace{1cm}\leq \varepsilon e^{\beta t}\xi(r) (2d - \alpha) - \varepsilon (\beta -1)e^{\beta t}\xi(r)- \dfrac{d-1}{r^2}\varepsilon e^{\beta t} \xi(r) < 0,
\end{align*}
from standard parabolic maximum principles with the initial assumption $h(r,0) < 0$ it readily follows that $h(r,t) < 0$ for all $(r,t) \in [0,\infty) \times [0,t_0)$, which implies $h(r_0,t_0) \leq 0$ due to the continuity of $h$. Now the function $g$ attains the maximum at $(r_0,t_0)$ where
\begin{align*}
g(r_0,t_0) = 0,\quad g_t(r_0,t_0) \ge 0,\quad g_r(r_0,t_0) = 0,\quad \Delta g(r_0,t_0) \leq 0.
\end{align*}
In particular, $r_0$ cannot be zero because of $\tilde{u}(0,t) =0$ for all $t \in [0,T_0)$.
This allows us to conclude from the first equation of \eqref{ine1:CP_4} that the following holds:
\begin{align*}
0&\leq g_t(r_0,t_0) - \Delta g(r_0,t_0) +v_rg_r(r_0,t_0)
\\
& = e^{-\alpha t_0} \left(-\frac{d-1}{r_0^2}-2v_{rr} - \frac{d-1}{r_0}v_r-\alpha\right) \tilde{u}(r_0,t_0)\\
&\hspace{1cm} - e^{-\alpha t_0}\left(v_{rrr} + \frac{d-1}{r_0}v_{rr}- \frac{d-1}{r^2}v_r\right) u(r_0,t_0) - \varepsilon \beta e^{\beta t_0}\xi(r_0)\\
&\hspace{1cm} + 2d\varepsilon e^{\beta t_0} - 2r_0v_r(r_0,t_0) \varepsilon e^{\beta t_0}
\end{align*}
Noticing that it follows from the second equation of \eqref{p} and the property $h(r_0,t_0) \leq 0$ that 
\begin{align}\label{ine2:CP_4}
v_{rr}(r_0,t_0) + \dfrac{d-1}{r_0}v_r(r_0,t_0) = -w(r_0,t_0) 
\end{align}
as well as
\begin{align*}
v_{rrr}(r_0,t_0) + \dfrac{d-1}{r_0} v_{rr}(r_0,t_0) - \dfrac{d-1}{r_0^2}v_r(r_0,t_0) = -h(r_0,t_0) \ge 0,
\end{align*}
thanks to the nonnegativity of $u$
we particularly see that
\begin{align*}
0&\leq g_t(r_0,t_0) - \Delta g(r_0,t_0) +v_rg_r(r_0,t_0)
\\
& \leq e^{-\alpha t_0} \left(-\frac{d-1}{r_0^2}-2v_{rr} - \frac{d-1}{r_0}v_r-\alpha\right) \tilde{u}(r_0,t_0) - \varepsilon \beta e^{\beta t_0}\xi(r_0)\\
&\hspace{1cm} + 2d\varepsilon e^{\beta t_0}- 2r_0v_r(r_0,t_0) \varepsilon e^{\beta t_0}
\\
&= e^{-\alpha t_0} \left(-\frac{d-1}{r_0^2}-v_{rr} + w -\alpha\right) \tilde{u}(r_0,t_0) - \varepsilon \beta e^{\beta t_0}\xi(r_0)\\
&\hspace{1cm}+ 2d\varepsilon e^{\beta t_0}- 2r_0v_r(r_0,t_0) \varepsilon e^{\beta t_0}.
\end{align*}
Since $g(r_0,t_0) = 0$ implies that
\begin{align*}
e^{-\alpha t_0}\tilde{u}(r_0,t_0) = \varepsilon e^{\beta t_0} >0
\end{align*}
and from \eqref{v_r:bdd} one can compute
\begin{align*}
|v_r(r,t)| \leq r \sup_{(r,t) \in (0,\infty) \times (0,T_0)} w(r,t),
\end{align*}
we can continue the computation, with the help of \eqref{ine2:CP_4} and $v_r(r_0,t_0) \leq 0$, to obtain 
\begin{align*}
0&\leq g(r_0,t_0) - \Delta g(r_0,t_0) + v_rg_r(r_0,t_0)
\\
&\leq e^{-\alpha t_0} \left(-v_{rr} + w -\alpha\right) \tilde{u}(r_0,t_0) - \varepsilon \beta e^{\beta t_0}\xi(r_0)\\
&\hspace{1cm} + 2d\varepsilon e^{\beta t_0} \xi(r_0) + 2r_0^2 \varepsilon e^{\beta t_0}\sup_{(r,t) \in (0,\infty) \times (0,T_0)} w(r,t)\\
&\leq \varepsilon e^{\beta t_0}\xi(r_0) \left(2w + \dfrac{d-1}{r_0}v_r(r_0,t_0) + 2d - \alpha\right) - \varepsilon \beta e^{\beta t_0}\xi(r_0)\\
&\hspace{1cm} + 2 \varepsilon e^{\beta t_0}\xi(r_0)\sup_{(r,t) \in (0,\infty) \times (0,T_0)} w(r,t)\\
&\leq \varepsilon e^{\beta t_0}\xi(r_0) \left(4\sup_{(r,t) \in (0,\infty) \times (0,T_0)} w(r,t) + 2d-\alpha \right) -\varepsilon e^{\beta t_0}\xi(r_0) < 0,
\end{align*}
which is impossible due to the definitions of $\alpha$ and $\beta$. Consequently by repeating the same argument and letting $\varepsilon \to 0$, thereafter $T_0 \to T$, we complete the proof.
\end{proof}

\begin{remark}
Thanks to Lemma \ref{CP_4}, if the initial data $(u_0,w_0)$ is radially nonincreasing in $r > 0$, then the solution $(u,v,w)$ retains this radial monotonicity for as long as it exists. 
\end{remark}

Finally, relying on Lemma~\ref{CP_2}, we construct a mass function that is monotone with respect to time.
The assumptions on the initial data required for the validity of the following comparison principle will be essential in the analysis carried out in Section~\ref{chap:sta}.

\begin{proposition}\label{CP_3}
Let $d \ge 5$ and $T > 0$, and let $(u,v,w)$ be a classical solution of \eqref{p} in $\mathbb{R}^d \times (0,T)$ established by Proposition~\ref{local_2}, 
with the associated mass function $(M,W)$ defined by \eqref{mass:1}, \eqref{mass:2}.
Then it holds that:
\begin{enumerate}
\item[\rm{(i)}] if the initial data $(u_0,w_0) \in (C^2(\R^d) \cap L^\infty(\R^d))^2$ satisfies 
\begin{equation}\label{sub:1}
\begin{cases}
-\Delta u_0 + \nabla \cdot (u_0  \nabla v_0) \ge 0\quad \mathrm{in}\ \R^d,\\
-\Delta w_0 - u_0 \ge 0\quad \mathrm{in}\ \R^d,
\end{cases}
\end{equation}
where $v_0$ is defined by $v_0 = (-\Delta)^{-1}w_0 + C$ for some constant $C$, then $(M,W)$ is nonincreasing in $t \in (0,T)$ for $r \in [0,\infty)$.
\item[\rm{(ii)}] if the initial data $(u_0,w_0) \in (C^2(\R^d) \cap L^\infty(\R^d))^2$ satisfies 
\begin{equation}\label{super:1}
\begin{cases}
-\Delta u_0 + \nabla \cdot ( u_0 \nabla v_0) \leq 0\quad \mathrm{in}\ \R^d,\\
-\Delta w_0 - u_0 \leq 0\quad \mathrm{in}\ \R^d,
\end{cases}
\end{equation}
where $v_0$ is defined by $v_0 = (-\Delta)^{-1}w_0 + C$ for some constant $C$, then $(M,W)$ is nondecreasing in $t \in (0,T)$ for $r \in [0,\infty)$.
\end{enumerate}
\end{proposition}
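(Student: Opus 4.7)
The strategy is to exploit Lemma~\ref{CP_2} twice: first to compare the solution $(M,W)$ with the stationary-in-time pair $(M_0,W_0)$ built from the initial data, then to compare $(M,W)$ with its time shift $(M(\cdot,\cdot+\tau),W(\cdot,\cdot+\tau))$. I focus on case (ii); case (i) follows by reversing inequalities.

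The computational heart is to translate the pointwise hypothesis \eqref{super:1} into the mass system \eqref{mass:3}. Using the identities $u_0=rM_{0,r}+dM_0$, $w_0=rW_{0,r}+dW_0$, $v_{0,r}=-rW_0$ and $M_{0,rr}+\frac{d+1}{r}M_{0,r}=\frac{u_{0,r}}{r}$ (the last coming from differentiating $r^d M_0(r)=\int_0^r\rho^{d-1}u_0(\rho)\,d\rho$), direct substitution yields
\[
M_{0,rr}+\tfrac{d+1}{r}M_{0,r}+rM_{0,r}W_0+dM_0W_0=\tfrac{1}{\sigma_d r^d}\int_{B_r}\bigl[\Delta u_0-\nabla\cdot(u_0\nabla v_0)\bigr]dy
\]
together with the analogous identity $W_{0,rr}+\frac{d+1}{r}W_{0,r}+M_0=\frac{1}{\sigma_d r^d}\int_{B_r}[\Delta w_0+u_0]\,dy$. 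Under \eqref{super:1} the two right-hand sides are nonnegative, which is precisely the statement that $(\underline{M},\underline{W}):=(M_0,W_0)$, regarded as constant in $t$, satisfies the subsolution inequalities of Lemma~\ref{CP_2}, while $(\overline{M},\overline{W}):=(M,W)$ is a solution. Since $r\overline{M}_r+d\overline{M}=u\ge 0$, boundedness on each slab $\mathcal{Q}_{T_0}$ follows from Proposition~\ref{local_2} combined with $rM_r=u-dM$ and $rW_r=w-dW$, and the initial ordering is the trivial equality $M_0=M(\cdot,0)$, $W_0=W(\cdot,0)$. Lemma~\ref{CP_2} therefore delivers $M_0(r)\le M(r,t)$ and $W_0(r)\le W(r,t)$ on $(0,\infty)\times(0,T)$.

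Specializing this first conclusion to $t=\tau$ for arbitrary $\tau\in(0,T)$ supplies exactly the ordering required for a second application of Lemma~\ref{CP_2}, now with $(\underline{M},\underline{W})=(M,W)$ and $(\overline{M},\overline{W})=(M(\cdot,\cdot+\tau),W(\cdot,\cdot+\tau))$. Since \eqref{mass:3} is autonomous, both pairs solve the system and the differential inequalities hold as equalities; the boundedness and monotonicity conditions are inherited from the solution itself. Lemma~\ref{CP_2} then gives $M(r,t)\le M(r,t+\tau)$ and $W(r,t)\le W(r,t+\tau)$ for every $\tau>0$, which is the desired nondecreasing monotonicity in $t$; extension to $r=0$ follows by continuity of $M$ and $W$ at the origin. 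Case (i) is symmetric: hypothesis \eqref{sub:1} makes the two right-hand sides above nonpositive so that $(M_0,W_0)$ is now a supersolution, taking $(\underline{M},\underline{W})=(M,W)$, $(\overline{M},\overline{W})=(M_0,W_0)$ in the first step produces $M\le M_0$, $W\le W_0$, and the analogous time-shifted comparison yields the nonincreasing monotonicity.

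The main obstacle is verifying the regularity prerequisites of Lemma~\ref{CP_2}, namely $(\underline{M},\overline{M},\underline{W},\overline{W})\in C^1(\overline{\mathcal{Q}_T})^4$ and uniform control of $rM_r$ up to the axis $r=0$. For the stationary pair these follow from $u_0,w_0\in C^2\cap L^\infty$ through Taylor expansion at the origin and the identity $rM_{0,r}=u_0-dM_0$, which also supplies the pointwise bound $|rM_{0,r}|\le\|u_0\|_{L^\infty}+d\|M_0\|_{L^\infty}$; for the evolved pair they follow from the classical regularity of $(u,v,w)$ provided by Proposition~\ref{local_2}, propagated through the integrals defining $M$ and $W$.
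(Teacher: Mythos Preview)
Your proof is correct and follows essentially the same route as the paper: translate the hypothesis \eqref{sub:1} or \eqref{super:1} into the statement that $(M_0,W_0)$, viewed as time-independent, is a super- or subsolution of \eqref{mass:3}; apply Lemma~\ref{CP_2} once to get the ordering between $(M,W)$ and $(M_0,W_0)$; then apply it a second time to the time-shifted pair to obtain monotonicity in $t$. Your write-up is in fact slightly more explicit than the paper's about the integral identity linking the mass operator to $\int_{B_r}[\Delta u_0-\nabla\cdot(u_0\nabla v_0)]$ and about checking the regularity and boundedness hypotheses of Lemma~\ref{CP_2}, but the underlying argument is identical.
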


\begin{proof}
(i) The assumption implies that for all $r \in (0,\infty)$
\begin{align*}
0 \leq -M_{rr}(r,0) - \dfrac{d+1}{r}M_r(r,0) - rM_r(r,0) W(r,0) - dM(r,0)W(r,0).
\end{align*}
From the definition of the mass function, together with Proposition~\ref{local_2} and Remark~\ref{local_2:rem}, for all $T_0 < T$ one can check that
\begin{align*}
M(r,t) \leq \dfrac{1}{d}\sup_{0 < t < T_0}\|u(t)\|_{L^\infty(\R^d)},\quad W(r,t) \leq \dfrac{1}{d}\sup_{0 < t < T_0}\|w(t)\|_{L^\infty(\R^d)}
\end{align*}
for all $(r,t) \in [0,\infty) \times (0,T_0)$. Moreover the nonnegativity of $u$ provides that
\begin{align*}
rM_r + dM = u \ge 0.
\end{align*}
This relation, together with the $L^\infty$-bound on $u$, yields
\begin{align*}
rM_r(r,t) = -d M(r,t) + u(r,t) \leq 2\sup_{0 < t < T_0}\|u(t)\|_{L^\infty(\R^d)}
\end{align*}
for all $(r,t) \in [0,\infty) \times (0,T_0)$. Therefore owing to Lemma \ref{CP_2} we arrive at
\begin{align*}
M(r,t) \leq M(r,0),\quad W(r,t) \leq W(r,0)\quad \mathrm{for\ all}\ (r,t) \in [0,\infty) \times [0,T).
\end{align*}
Put $(M^\tau,W^\tau)(r,t) := (M,W)(r, t + \tau)$ for $(r,t) \in [0,\infty) \times [0,T-\tau)$. Then the above inequality make sure that
\begin{align*}
M^\tau (r,0) \leq M(r,0),\quad W^\tau(r,0) \leq W(r,0)\quad \mathrm{for\ all}\ r \in [0,\infty),
\end{align*}
thereby we use Lemma \ref{CP_2} again to obtain
\begin{align*}
M^\tau(r,t) \leq M(r,t),\quad W^\tau(r,t) \leq W(r,t)\quad \mathrm{for\ all}\ (r,t) \in [0,\infty) \times [0,T-\tau).
\end{align*}
Finally dividing both sides by $\tau$ and passing to the limit $\tau \to 0$, we get our desire. The same argument applies to the case (ii) as well.
\end{proof}

\section{Proof of Theorem~\ref{th:1} and Theorem~\ref{th:2}}\label{chap:th_1}

In what follows, we emphasize that the fundamental problem we face for the system \eqref{p} is to identify the critical quantity or, more precisely, the explicit function that determines the long-time behavior of its solutions.
We first point out that the problem is deeply related to stationary, radial, and homogeneous solutions of the system \eqref{p}. Since the system \eqref{p} is invariant by a standard scaling argument, any such stationary profile must necessarily be a multiple of $|x|^\alpha$. Motivated by this observation, we perform a formal computation to determine the constant appearing in such stationary profiles and thereby introduce a new singular stationary solution. 
\begin{proof}[Proof of Theorem~\ref{th:1}]
By a scaling argument, we may look for stationary solutions of the form
\begin{align*}
u = \frac{A}{|x|^4}, \quad v= C\log |x| + C_0,\quad w = \frac{B}{|x|^2}
\end{align*}
for some constants $A, B, C, C_0$. These functions are understood to satisfy the stationary problem \eqref{S0} associated with \eqref{p} in the sense of distributions. Throughout the following computations, all identities are to be interpreted in the distributional sense.
The third equation of \eqref{S0} implies that
\begin{align}\label{chan:1}
\dfrac{B(2d-8)}{|x|^4} = \dfrac{A}{|x|^4}
\end{align}
and we rewrite the second equation as
\begin{align}\label{chan:2}
\dfrac{C(2-d)}{|x|^2} = \dfrac{B}{|x|^2}.
\end{align}
Moreover it readily from the first equation of \eqref{S0} that
\begin{align*}
0=\dfrac{-A(4+C)(d-6)}{|x|^6},
\end{align*}
which means that for every $d \ge 7$ we must have $-A(4 + C) = 0$. Combing this fact with \eqref{chan:1} and \eqref{chan:2}, we conclude that 
\begin{align*}
A = 8(d-4)(d-2),\quad B= 4(d-2),\quad C=-4.
\end{align*}
We emphasize that the condition $d\ge7$ is not only required for the above algebraic argument,
but is also essential to ensure that the corresponding singular stationary solution
satisfies \eqref{S0} in the sense of distributions. Indeed  for the singular profile
$u_C \sim r^{-4}$ and $(v_C)_r \sim r^{-1}$, the chemotactic flux term satisfies
\begin{align*}
\nabla \cdot (u_C\nabla v_C) = r^{1-d}(r^{d-1}u_C(v_C)_r)_r \sim r^{1-d}(r^{d-6})_r.
\end{align*}
Thus the requirement $d \ge 7$ is necessary to ensure that the chemotactic flux term $u_C\nabla v_C$ is locally integrable and that the equation holds in the distributional sense.
\end{proof}

We now proceed to provide a rigorous proof of Theorem~\ref{th:2}. In order to rigorously establish Theorem~\ref{th:2}, we make use of a specially designed mass function, distinct from \eqref{mass:1} and \eqref{mass:2}, that turns out to be a fundamental tool in the forthcoming argument (see \cite{BKW2023, BKZ2018, BKP2019, BHN1994}). The simplified Keller--Segel system \eqref{KS_0} can be reduced to a scalar parabolic problem through an appropriate change of variables. In contrast, our system \eqref{p} admits no such reduction and therefore truly remains a coupled system.
Assume that $(u,v,w)$ is a radially symmetric solution classically in $\R^d \times (0,T)$, obtained in Proposition~\ref{local_1}. We introduce the mass function $(X,Y)$ by
\begin{align}
X(r,t) &:= \int_{|x| < r} u(x,t) dx = \sigma_d\int_0^r \rho^{d-1} u(\rho,t) d\rho,\label{mass function_1}\\
Y(r,t) &:= \int_{|x| < r}w(x,t) dx\label{mass function_2}
\end{align}
for all $(r,t) \in (0,\infty) \times [0,T)$, where we impose the natural conditions
\begin{align*}
X(0,t) =0,\quad Y(0,t) = 0\quad \mathrm{for}\ t \in [0,T).
\end{align*}
Then $(X,Y)$ belongs to $[C^1([0,\infty) \times [0,T)) \cap C^{2,1}((0,\infty) \times (0,T))]^2$ satisfying the system
\begin{equation}\label{radeq}
\begin{cases}
X_t = X_{rr} -\dfrac{d-1}{r}X_r + \dfrac{1}{\sigma_d}r^{1-d}X_rY\qquad &\mathrm{in}\ (0,\infty)\times (0, T),\\
Y_t = Y_{rr} - \dfrac{d-1}{r}Y_r + X \qquad &\mathrm{in}\ (0,\infty) \times (0,T).
\end{cases}
\end{equation}
In the study \cite{BKP2019}
on the simplified Keller--Segel system \eqref{KS_0}, 
the elliptic equation enables a further reduction to a single equation, and the coupling term $X_rY$ in the first equation of \eqref{radeq} is replaced by the quadratic term $X_rX$. 
The proof of the next proposition is inspired by \cite[Proposition 4.1]{BKP2019}. However, it is important to stress that our analysis is carried out for the system \eqref{radeq}.

\begin{proposition}\label{rad:prop}
Let $d \ge 5$ and $T>0$, and $p \in (\frac{d}{3}, \frac{d}{2})$ be close to $\frac{d}{3}$. 
Let $(u,v,w)$ be a solution of \eqref{p} in $\R^d \times (0,T)$ given by Proposition~\ref{local_1}, with radially symmetric initial data $(u_0,w_0) \in (M^{\frac{d}{4}}(\R^d) \cap \dot{M}^p(\R^d)) \times (M^\frac{d}{2}(\R^d) \cap M^\frac{dp}{d-2p}(\R^d))$.
Assume that the initial data $(u_0,w_0)$ satisfies for all $r > 0$
\begin{align}
X_0(r) &= \int_{|x|< r} u_0 (x) dx < \min\{Kr^{d-\frac{d}{p}}, 8(d-2) \sigma_d \varepsilon r^{d-4}\} =: b_0(r),\label{rad:asp1}\\
Y_0(r) &= \int_{|x| < r}w_0 (x) dx < \min\{Kr^{d-\frac{d}{p}+2}, 4\sigma_d \varepsilon^\prime r^{d-2}\} =: b_1(r),\label{rad:asp2}
\end{align}
where $K$ is some constant and $ 0 < \varepsilon < \varepsilon^\prime < \frac{d}{4p} < 1$ such that $\varepsilon^\prime \leq 2(d-2)\varepsilon$.
Then the mass function $(X,Y)$, defined by \eqref{mass function_1} and \eqref{mass function_2}, enjoys the same bound 
\begin{align*}
X(r,t) < b_0(r),\quad Y(r,t) < b_1(r)\quad \mathrm{for\ all}\ (r,t) \in (0,\infty) \times [0,T).
\end{align*}
\end{proposition}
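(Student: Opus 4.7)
The plan is to adapt the first-crossing/barrier argument from \cite{BKP2019} to the coupled mass-function system \eqref{radeq}. Unlike the simplified Keller--Segel system \eqref{KS_0}, our system \eqref{radeq} does not reduce to a scalar equation for $X$, so the two barriers $b_0$ and $b_1$ must be propagated simultaneously as a pair.

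The first step is to check that on each smooth piece, $(b_0, b_1)$ is a strict supersolution pair, i.e.
\begin{align*}
-b_0'' + \tfrac{d-1}{r}b_0' - \tfrac{1}{\sigma_d}r^{1-d}b_0'\,b_1 > 0,\qquad -b_1'' + \tfrac{d-1}{r}b_1' - b_0 > 0.
\end{align*}
A direct computation on the critical pieces $b_0 = 8(d-2)\sigma_d\varepsilon\, r^{d-4}$, $b_1 = 4\sigma_d\varepsilon'\, r^{d-2}$ reduces both inequalities to $\varepsilon < \varepsilon' < 1$, which is part of the hypothesis. On the $M^p$ pieces $b_0 = Kr^{d-d/p}$, $b_1 = Kr^{d-d/p+2}$, the first inequality becomes $(K/\sigma_d)r^{4-d/p} < d/p$ (which holds on the $M^p$ region by $\varepsilon' < d/(4p)$), and the second reduces to the coercivity condition $(d-d/p+2)(d/p-2) > 1$, valid for $p$ close to $d/3$ and $d \geq 5$. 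The hypothesis $\varepsilon' \leq 2(d-2)\varepsilon$ places the crossover radius of $b_1$ inside that of $b_0$, so the only mixed regime that occurs is $b_0 = Kr^{d-d/p}$ together with $b_1 = 4\sigma_d\varepsilon'\, r^{d-2}$, whose supersolution inequalities again reduce to $\varepsilon < \varepsilon' < d/(4p)$.

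Next, define
$$t_0 := \sup\bigl\{t \in [0, T) : X(r,s) < b_0(r)\text{ and } Y(r,s) < b_1(r) \text{ for all } r > 0,\ s \in [0, t]\bigr\},$$
which is strictly positive by the strict initial inequalities \eqref{rad:asp1}--\eqref{rad:asp2} together with joint continuity of $(X, Y)$ inherited from Proposition~\ref{local_1}. Assume toward contradiction that $t_0 < T$. Contact at $r = 0$ is excluded because the classical regularity of $(u, w)$ for $t > 0$ (cf.\ the remark after Proposition~\ref{local_1}) yields $X(r, t) = O(r^d) = o(r^{d-d/p})$ and analogously for $Y$; contact as $r \to \infty$ is excluded by the strict critical Morrey bounds on the initial data. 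Touching at the corner radius of $b_0$ (or $b_1$) is ruled out automatically: since $d-d/p > d-4$, the left one-sided slope of $b_0$ at its corner exceeds the right slope, so $b_0$ is concave there, and the one-sided inequalities on $X_r$ forced by $X \leq b_0$ with equality at the corner would contradict each other.

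So the first touch occurs at an interior point $r_* \in (0, \infty)$ of a smooth piece; say $X(r_*, t_0) = b_0(r_*)$ (the case $Y = b_1$ is analogous via the second equation). Then
$$X_r(r_*, t_0) = b_0'(r_*) \geq 0,\quad X_{rr}(r_*, t_0) \leq b_0''(r_*),\quad X_t(r_*, t_0) \geq 0,$$
and combining with $Y(r_*, t_0) \leq b_1(r_*)$ and the first equation of \eqref{radeq} gives
$$0 \leq X_t(r_*, t_0) \leq b_0''(r_*) - \tfrac{d-1}{r_*}b_0'(r_*) + \tfrac{1}{\sigma_d}r_*^{1-d}b_0'(r_*)\,b_1(r_*) < 0,$$
contradicting the strict supersolution inequality from the first step. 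The main obstacle, compared with \cite{BKP2019}, is precisely this coupling: a touching event for one barrier cannot be analyzed without simultaneous control of the other on $[0, t_0]$, which is exactly why the tuned parameter window $\varepsilon < \varepsilon' < d/(4p)$ together with $\varepsilon' \leq 2(d-2)\varepsilon$ is indispensable, the first pair ensuring the two supersolution inequalities and the last ordering the crossover radii so that no uncontrollable mixed regime arises.
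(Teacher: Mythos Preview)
Your proof is correct and follows essentially the same barrier/first-touching strategy as the paper. The paper organizes the same computations via rescaled variables $\overline{X}=r^{4-d}X$, $\underline{X}=r^{-(d-d/p)}X$ (and similarly for $Y$) and checks three overlapping cases according to the position of the touching radius relative to $r_*$ and $r^*$, whereas you first verify the strict supersolution inequalities for $(b_0,b_1)$ on each piece and then run a single first-touching argument; your explicit concavity exclusion at the corner replaces the paper's overlapping case decomposition. One remark: your justification that ``contact as $r\to\infty$ is excluded by the strict critical Morrey bounds on the initial data'' is not quite the right reason (the initial bounds do not directly control $r^{4-d}X(r,t_0)$ as $r\to\infty$), but the paper's proof leaves this same compactness point implicit.
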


\begin{proof}
Assume for contradiction that there exists a point $(r_0, t_0) \in (0,\infty) \times (0,T)$ where $X(r_0,t_0) = b_0(r_0)$. At this point, $X(r,t)$ meets the barrier function $b_0$. From the assumptions \eqref{rad:asp1} and \eqref{rad:asp2}, the two branches of the barrier functions $b_0(r)$ and $b_1(r)$ intersect at the points, respectively,
\begin{align*}
r^* = \left(\dfrac{8(d-2)\sigma_d \varepsilon}{K}\right)^\frac{1}{4-\frac{d}{p}},\quad r_* = \left(\dfrac{4\sigma_d \varepsilon^\prime}{K}\right)^\frac{1}{4-\frac{d}{p}}.
\end{align*}
Then the relationship of $\varepsilon$ and $\varepsilon^\prime$ yields $r_* \leq r^*$.
We will first show that
\begin{align}\label{radeq:3}
Y(r,t) < b_1(r)\quad \mathrm{for\ all}\ (r,t) \in (0,\infty) \times [0,t_0].
\end{align}
If it was false, then we could find the point $(r_1,t_1) \in (0,\infty) \times (0,t_0]$ where 
$Y(r_1, t_1) = b_1(r_1)$.

\medskip
\textbf{(Case 1)} We first consider the case $r_* \leq r^* \leq r_1$. Then, thanks to the initial assumption \eqref{rad:asp2},
$\overline{Y}(r,t) = r^{2-d} Y(r,t)$ reaches the value $4\sigma_d \varepsilon^\prime$ at some first touching point $(r_1,t_1)$, which is a local maximum point of $\overline{Y}(\cdot,t_1)$. Therefore we get
\begin{align}\label{radeq:1}
\overline{Y}_t(r_1,t_1) \ge 0,\quad \overline{Y}_r(r_1,t_1) = 0,\quad \overline{Y}_{rr}(r_1,t_1) \leq 0.
\end{align}
Upon combining the second equation of \eqref{radeq} and \eqref{radeq:1}, we obtain the following calculation, although it should be noted that $X(r_1,t_1) \leq b_0(r_1) = 8(d-2)\sigma_d \varepsilon r_1^{d-4}$ due to $r_* \leq r_1$.
\begin{align*}
0&\leq\overline{Y}_t(r_1,t_1)\\
 &= r_1^{2-d}\left\{(r_1^{d-2} \overline{Y}(r_1,t_1))_{rr} -\dfrac{d-1}{r_1}(r_1^{d-2}\overline{Y}(r_1,t_1))_r + X(r_1,t_1)\right\}\\
&\leq (d-2)(d-3)r_1^{-2}(4\sigma_d \varepsilon^\prime) - (d-1)(d-2)r_1^{-2}(4\sigma_d\varepsilon^\prime) + r_1^{2-d}8(d-2)\sigma_d\varepsilon r_1^{d-4}\\
&= -8(d-2)\sigma_d \varepsilon^\prime r_1^{-2} + 8(d-2)\sigma_d \varepsilon r_1^{-2}\\
&= 8(d-2)\sigma_d r_1^{-2} (\varepsilon - \varepsilon^\prime),
\end{align*}
which is impossible in view of $\varepsilon < \varepsilon^\prime$. 

\medskip
\textbf{(Case 2)} We next consider the case $r_* \leq r_1 \leq r^*$. By observing that $X(r_1,t_1) \leq b_0(r_1) = Kr_1^{d-\frac{d}{p}}$ because $r_1 \leq r^*$, while the function $Y(r,t)$ hits the barrier function $b_1(r) = 4\sigma_d \varepsilon^\prime r^{d-2}$ at the point $(r_1,t_1)$,
an entirely analogous argument applied to the function $\overline{Y}$ yields 
\begin{align}
0\leq\overline{Y}_t(r_1,t_1) &\leq -8(d-2) \sigma_d \varepsilon^\prime r_1^{-2} + r_1^{2-d}Kr_1^{d-\frac{d}{p}}\notag\\
&=r_1^{-2}\{-8(d-2)\sigma_d \varepsilon^\prime + Kr_1^{4-\frac{d}{p}}\}.\label{radeq:2}
\end{align}
Since the case $r_1 \leq r^*$ and the definition of $r_*$ make sure that
\begin{align*}
r_1^{4-\frac{d}{p}} \leq \dfrac{8(d-2)\sigma_d\varepsilon}{K},
\end{align*} 
we may apply this inequality to \eqref{radeq:2} as follows
\begin{align*}
0\leq\overline{Y}_t(r_1,t_1) &\leq r_1^{-2}\{-8(d-2)\sigma_d \varepsilon^\prime + 8(d-2)\sigma_d\varepsilon\}\\
&=8(d-2)\sigma_d \varepsilon r_1^{-2}(\varepsilon-\varepsilon^\prime),
\end{align*}
which leads to a contradiction with $\varepsilon < \varepsilon^\prime$.

\medskip
\textbf{(Case 3)} We finally consider the case $r_1 \leq r_* \leq r^*$, which indicates that the function $Y$ touches the barrier function $b_1(r) = Kr^{d-\frac{d}{p}+2}$ at $(r_1,t_1)$. Similar to the argument in (Case 1), we introduce the rescaled function
$\underline{Y}(r,t) = r^{-(d-\frac{d}{p}+2)}Y(r,t)$. Hence it follows that $\underline{Y}(r_1,t_1) = K$ as well as
\begin{align*}
\underline{Y}_t(r_1,t_1) \ge 0,\quad \underline{Y}_r(r_1,t_1) = 0,\quad \underline{Y}_{rr}(r_1,t_1) \leq 0,
\end{align*}
so that
\begin{align*}
0 \leq \underline{Y}_t(r_1,t_1) &= r_1^{-(d-\frac{d}{p}+2)}\left\{(r_1^{d-\frac{d}{p}+2} \underline{Y}(r_1,t_1))_{rr} -\dfrac{d-1}{r_1}(r_1^{d-\frac{d}{p}+2}\underline{Y}(r_1,t_1))_r + X(r_1,t_1)\right\}\\
&\leq \left(d-\dfrac{d}{p}+2\right)\left(d-\dfrac{d}{p} + 1\right)Kr_1^{-2} - (d-1)\left(d-\dfrac{d}{p}+2\right)Kr_1^{-2}\\
&\hspace{1cm} + r_1^{-(d-\frac{d}{p}+2)} K r_1^{d-\frac{d}{p}}\\
&=Kr_1^{-2}\left\{\left(d-\dfrac{d}{p}+2\right)\left(2-\dfrac{d}{p}\right) + 1\right\}.
\end{align*}
Here we readily confirm that the assumption for $p$ ensures that
\begin{align*}
\left(d-\dfrac{d}{p}+2\right)\left(2-\dfrac{d}{p}\right) + 1 < 0, 
\end{align*}
thereby this leads to a contradiction. Hence in any case, we obtain our claim \eqref{radeq:3}. We are in a position to complete the proof by carrying out an analogous argument for the function $X(r,t)$.

\medskip
\textbf{(Case 1)} We first study the case $r_*  \leq r^* \leq r_0$. This condition yields that $\overline{X}(r,t) = r^{4-d} X(r,t)$ hits the level $8(d-2)\sigma_d\varepsilon$ at the point $(r_0,t_0)$ where
\begin{align}\label{radeq:4}
\overline{X}_t(r_0,t_0) \ge 0,\quad \overline{X}_r(r_0,t_0) = 0,\quad \overline{X}_{rr}(r_0,t_0) \leq 0.
\end{align}
Using the above properties together with the first equation of \eqref{radeq} and taking into account that $Y(r_0,t_0) < b_1(r_0) = 4\sigma_d \varepsilon^\prime r_0
^{d-2}$ by $r_*\leq r_0$, we arrive at
\begin{align*}
0\leq\overline{X}_t(r_0,t_0) &= r_0^{-(4-d)}\left\{(r_0^{d-4}\overline{X}(r_0,t_0))_{rr} -\dfrac{d-1}{r_0}(r_0^{d-4}\overline{X}(r_0,t_0))_r\right\}\\
&\hspace{1cm} + r_0^{-(d-4)}\dfrac{1}{\sigma_d}r_0^{1-d}(r_0^{d-4}\overline{X}(r_0,t_0))_rY(r_0,t_0)\\
&\leq (d-4)(d-5)r_0^{-2}8(d-2)\sigma_d\varepsilon - (d-4)(d-1)r_0^{-2}8(d-2)\sigma_d\varepsilon\\
&\hspace{1cm} + 8(d-4)(d-2)\varepsilon r_0^{-d} 4\sigma_d\varepsilon^\prime r_0^{d-2}\\
&= 32(d-4)(d-2)\sigma_dr_0^{-2}\varepsilon (\varepsilon^\prime -1).
\end{align*}
However, such a situation cannot arise due to the assumption $\varepsilon^\prime < 1$.

\medskip
\textbf{(Case 2)} We next study the case $r_* \leq r_0 \leq r^*$. 
In this situation, $\underline{X}(r,t) = r^{-(d-\frac{d}{p})}X(r,t)$ hits the level $K$ at $(r_0,t_0)$ while $Y(r_0,t_0) < b_1(r_0) = 4\sigma_d \varepsilon^\prime r_0^{d-2}$. Using the characterization of $(r_0,t_0)$ and applying relation \eqref{radeq:4} with $\underline{X}$ replaced by $\overline{X}$, we obtain
\begin{align*}
0\leq\underline{X}_t(r_0,t_0) &\leq \left(d-\dfrac{d}{p}\right)K\left(-\dfrac{d}{p}\right)r_0^{-2} + 4\left(d-\dfrac{d}{p}\right)K\varepsilon^\prime r_0^{-2}\\
&= K\left(d-\dfrac{d}{p}\right)r_0^{-2}\left(4\varepsilon^\prime - \dfrac{d}{p}\right),
\end{align*}
which is inconsistent with the assumption $\varepsilon^\prime< \frac{d}{4p}$.

\medskip
\textbf{(Case 3)} We finally study the case $r_0 \leq r_* \leq r^*$. The same reasoning remains valid in (Case 2) while $Y(r_0,t_0) < b_1(r_0) = Kr_0^{d-\frac{d}{p} + 2}$. We thus gain
\begin{align*}
0\leq\underline{X}_t(r_0,t_0) &= r_0^{-(d-\frac{d}{p})}\left\{(r_0^{d-\frac{d}{p}}\underline{X}(r_0,t_0))_{rr} -\dfrac{d-1}{r_0}(r_0^{d-\frac{d}{p}}\underline{X}(r_0,t_0))_r\right\}\\
&\hspace{1cm} + r_0^{-(d-\frac{d}{p})}\dfrac{1}{\sigma_d}r_0^{1-d}(r_0^{d-\frac{d}{p}}\underline{X}(r_0,t_0))_rY(r_0,t_0)\\
&\leq \left(d-\dfrac{d}{p}\right)\left(d-\dfrac{d}{p}-1\right)r_0^{-2}K - (d-1)\left(d-\dfrac{d}{p}\right)r_0^{-2}K\\
&\hspace{1cm} + \dfrac{1}{\sigma_d}\left(d - \dfrac{d}{p}\right)r_0^{-d}K^2r_0^{d-\frac{d}{p} + 2}\\
&= K\left(d-\dfrac{d}{p}\right)r_0^{-2}\left(-\dfrac{d}{p} + \dfrac{K}{\sigma_d}r_0^{4-\frac{d}{p}}\right).
\end{align*}
Using the fact $r_0 \leq r_*$, we obtain
\begin{align*}
r_0^{4-\frac{d}{p}} \leq \dfrac{4\sigma_d\varepsilon^\prime}{K}.
\end{align*}
According to the assumption $\varepsilon^\prime < \frac{d}{4p}$, however, this would lead to the impossible conclusion.

Hence, in every possible situation we reach a contradiction, and our desired conclusion follows by repeating the same line of reasoning for $Y(r,t)$.
\end{proof}

\begin{proof}[Proof of Theorem~\ref{th:2}]
Let $T > 0$ and $(u,v,w)$ be the classical and radially symmetric solution in $\R^d \times (0,T)$ obtained in Proposition~\ref{local_1}. Since $u_0 \in \dot{M}^p(\R^d)$ and $w_0 \in M^\frac{dp}{d-2p}(\R^d)$, we obtain the estimates
\begin{align*}
X_0(r) \leq r^{d-\frac{d}{p}}\|u_0\|_{M^p(\R^d)},\quad Y_0(r) \leq r^{d-\frac{d}{p} + 2}\|w_0\|_{M^\frac{dp}{d-2p}(\R^d)}\quad \mathrm{for\ all}\ r\in (0,\infty).
\end{align*}
Choose a constant $K$ sufficiently large so that
\begin{align*}
K > \max\{\|u_0\|_{M^p(\R^d)},\|w_0\|_{M^\frac{dp}{d-2p}(\R^d)}\}.
\end{align*}
The assumptions \eqref{th:2,asup1} and \eqref{th:2,asup2} allow us to take $\varepsilon < \varepsilon^\prime < \frac{3}{4}$ such that
\begin{align*}
X_0(r) < 8(d-2)\sigma_d \varepsilon r^{d-4},\quad Y_0(r) < 4\sigma\varepsilon^\prime r^{d-2}\quad \mathrm{for\ all}\ r\in (0,\infty).
\end{align*}
We also note that, since $p \in (\frac{d}{3}, \frac{d}{2})$ is taken sufficiently close to $\frac{d}{3}$, the quantity $\frac{d}{4p}$ is sufficiently close to $\frac{3}{4}$. Therefore, one can select parameters $\varepsilon$ and $\varepsilon^\prime$ such that $\varepsilon < \varepsilon^\prime < \frac{d}{4p}$ with both $\varepsilon$ and $\varepsilon^\prime$ arbitrarily close to $\frac{3}{4}$, and satisfying the natural requirement
$\varepsilon^\prime \leq 2(d-2)\varepsilon$. 
Hence, invoking Proposition~\ref{rad:prop}, we deduce that
\begin{align*}
X(r,t) < b_0(r),\quad Y(r,t) < b_1(r)\quad \mathrm{for\ all}\ (r,t) \in (0,\infty) \times [0,T),
\end{align*}
where the barrier functions $b_0(r)$ and $b_1(r)$ are exactly those defined in \eqref{rad:asp1} and \eqref{rad:asp2} respectively. From the definition of $(X,Y)$, we deduce
\begin{align*}
\sup_{0 < t < T}(\|u(t)\|_{M^\frac{d}{4}(\R^d)} + \|u(t)\|_{M^p(\R^d)} + \|w(t)\|_{M^\frac{d}{2}(\R^d)} + \|w(t)\|_{M^\frac{dp}{d-2p}(\R^d)}) < \infty.
\end{align*}
Finally, arguing in the same manner as in the proof of Proposition~\ref{local_1}, we obtain the differential inequality
\begin{align*}
\|u(t)\|_{M^r(\R^d)} &\leq Ct^{-\frac{d}{2}(\frac{1}{p}-\frac{1}{r})}\|u_0\|_{M^p(\R^d)} +
C\int_0^t (t-s)^{-\frac{1}{2}} \|u(s)\|_{M^r(\R^d)} \|\nabla E_d * w(s)\|_{L^\infty(\R^d)} ds\\
&\leq Ct^{-\frac{d}{2}(\frac{1}{p}-\frac{1}{r})}\|u_0\|_{M^p(\R^d)}\\
&\hspace{0.4cm} + C\sup_{0 < t < T}\|w(t)\|_{M^\frac{d}{2}(\R^d)} \sup_{0 < t < T}\|w(t)\|_{M^\frac{dp}{d-2p}(\R^d)}
\int_0^r (t-s)^{-\frac{1}{2}}\|u(s)\|_{M^r(\R^d)} ds,
\end{align*}
from which the singular Gronwall inequality (see  \cite[Theorem II.3.3.1]{A1995}
and \cite[Lemma 1.2.9]{CD2000}) yields the desired estimate:
\begin{align*}
\sup_{0 < t < T}t^{\frac{d}{2}(\frac{1}{p}-\frac{1}{r})}\|u(t)\|_{M^r(\R^d)} < \infty.
\end{align*}
Combining the above estimates and invoking a standard continuation argument, we conclude that the solution $(u,v,w)$ of \eqref{p} exists globally in time, meaning $T= \infty$. Furthermore, such a global-in-time solution satisfies the upper bounds \eqref{th:2:upper} as a direct consequence of Proposition~\ref{rad:prop}. 
\end{proof}

\section{The existence of blow up points at infinite time via Comparison Principles}\label{chap:blowup}

In this section, we develop the arguments required to establish convergence toward the stationary state. 
For the simplified Keller--Segel system \eqref{KS_0}, the second equation reduces to the Poisson equation, and by utilizing the radial symmetry and the corresponding mass function, the system can essentially be rewritten as a single equation. This structure allows one to exploit suitable test functions to derive superlinear inequalities for certain energy type quantities, leading to Kaplan-type blow-up arguments (see \cite{K1963, W2023_2, N2021}). As a consequence, it can be shown that blow-up may occur only at the origin, which in turn enables the analysis of convergence to stationary solutions away from the singularity. In contrast, the parabolic system \eqref{mass:3} derived from \eqref{p} is genuinely coupled and cannot be reduced to a single parabolic equation. Hence the aforementioned techniques are not directly applicable. Therefore, in this section we rely on the comparison principle developed in Chapter~\ref{chap:2} to locate the possible blow-up point and to prepare the analysis required for convergence to stationary solutions. Here we make use of the monotonicity of $(M,W)$ for $r > 0$.

\begin{proposition}\label{non-inc}
Let $(u_0,w_0)\in (C(\R^d) \cap L^\infty(\R^d))^2$ be radially nonincreasing and $T > 0$. Assume that $(u,v,w)$ is the solution of \eqref{p} in $\R^d \times (0,T)$ given by Proposition~\ref{local_2}.
Then the associated mass function $(M,W) \in [C([0,\infty) \times [0,T)) \cap C^{2,1}([0,\infty) \times (0,T))]^2 $ is nonincreasing in $r > 0$ for each $t \in [0,T)$.

\end{proposition}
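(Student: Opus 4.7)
The plan is to split the proof into two pieces. The first step is to invoke Lemma~\ref{CP_4} directly to conclude that the solution components $u(\cdot,t)$ and $w(\cdot,t)$ themselves remain radially nonincreasing in $r$ for every $t \in [0,T)$; this is where all the analytical work has already been done, via the parabolic-type comparison argument established earlier. Since $(u_0,w_0)$ is radially nonincreasing by hypothesis and $(u,v,w)$ is exactly the classical solution produced by Proposition~\ref{local_2}, the hypotheses of Lemma~\ref{CP_4} are met and its conclusion applies verbatim.

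The second step is to transfer this pointwise monotonicity of $u$ and $w$ to their radial averages $M$ and $W$. Differentiating the definition \eqref{mass:1} of $M$ in $r$ produces the elementary identity
\[
r\,M_r(r,t) = u(r,t) - d\,M(r,t), \qquad (r,t)\in(0,\infty)\times[0,T),
\]
and the nonincreasing property of $u(\cdot,t)$ secured in Step~1 gives $u(\rho,t)\ge u(r,t)$ for $0<\rho\le r$; multiplying by $\rho^{d-1}$ and integrating over $(0,r)$ yields $u(r,t)\le d\,M(r,t)$, hence $M_r(r,t)\le 0$. The same computation with $w$ in place of $u$ delivers $W_r(r,t)\le 0$. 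Monotonicity up to and including $r=0$ is then ensured by the defining extensions $M(0,t)=u(0,t)/d$ and $W(0,t)=w(0,t)/d$ together with the regularity of $(u,w)$ coming from Proposition~\ref{local_2}.

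I do not expect any serious obstacle: once Lemma~\ref{CP_4} is available the remainder is a routine averaging computation, and the only mild subtlety is verifying that $M$ and $W$ extend continuously (in fact smoothly) across $r=0$, which follows from the classical smoothness of $u$ and $w$ on $\mathbb{R}^d\times(0,T)$ by a Taylor expansion at the origin. In short, the entire content of the proposition is the combination of Lemma~\ref{CP_4} with the standard fact that the spherical average of a radially nonincreasing function is itself nonincreasing in $r$.
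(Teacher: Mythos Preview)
Your proposal is correct and follows essentially the same approach as the paper: invoke Lemma~\ref{CP_4} to obtain radial monotonicity of $u(\cdot,t)$ and $w(\cdot,t)$, then use the identity $rM_r=u-dM$ together with $u(\rho,t)\ge u(r,t)$ for $\rho\le r$ to conclude $M_r\le 0$ (and likewise for $W$). The paper carries out the identical averaging computation with only cosmetic differences in how the inequality is arranged.
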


\begin{proof}
Noticing that the solution $(u,v,w)$ of \eqref{p} corresponding to the radially nonincreasing initial data $(u_0,w_0)$ is also radially nonincreasing for each $t \in [0,T)$ thanks to Lemma~\ref{CP_4}
and that the mass function $(M,W)$ is given by \eqref{mass:1} and \eqref{mass:2}, we deduce that for all $(r,t) \in (0,\infty) \times (0,T)$
\begin{align*}
M_r(r,t) &= -dr^{-1-d}\int_0^r \rho^{d-1}u(\rho,t)d\rho + \dfrac{u(r,t)}{r}\\
&\leq -d u(r,t) r^{-1-d} \int_0^r \rho^{d-1} d\rho + \dfrac{u(r,t)}{r}\\
&= 0.
\end{align*}
The same applies to $W(r,t)$ as well. Hence we complete the proof.
\end{proof}

The next proposition serves as the core of this section.
We derive a sufficient condition under which solutions blow up in finite time.
A key feature of the proof is the construction of a new blowing up subsolution for \eqref{mass:3}.

\begin{proposition}\label{finite}
Let $d \ge 5$ and let $(u,v,w)$ be a solution of \eqref{p} in $\R^d \times (0,T_\mathrm{max})$ given by Proposition~\ref{local_2}, corresponding to radially symmetric initial data $(u_0,w_0) \in (C(\R^d) \cap L^\infty(\R^d))^2$.
Let $(M,W) \in [C([0,\infty) \times [0,T_\mathrm{max})) \cap C^{2,1}([0,\infty) \times (0,T_\mathrm{max}))]^2
$ denote the mass functions defined by \eqref{mass:1} and \eqref{mass:2}.
Fix constants $A,B,k$ such that 
\begin{align}\label{asp:finite1}
4 < B,\quad \dfrac{A}{B} > \dfrac{2d-3 + \sqrt{(2d-3)^2 + 32}}{2},
\end{align}
where $A$ is chosen sufficiently large and $B$ is sufficiently close to $4$, as well as
\begin{align}\label{asp:finite2}
\dfrac{2B(d+2)}{A-B} < k < \min\left\{\dfrac{A}{B} -2(d-2), \dfrac{d}{2}B\right\}
\end{align}
with $k$ chosen sufficiently close to the lower bound $\frac{2B(d+2)}{A-B}$.
Suppose that there exist $r_0 > 0$ and $t_0 \in (0,T_\mathrm{max})$ such that
\begin{align*}
\dfrac{A}{r^4 + k} \leq M(r,t_0),\quad \dfrac{B}{r^2 + k} \leq W(r,t_0)\quad \mathrm{for\ all}\ r \in (0,r_0)
\end{align*}
as well as
\begin{align*}
\dfrac{A}{r_0^4} \leq M(r_0,t),\quad \dfrac{B}{r_0^2} \leq W(r_0,t)\quad \mathrm{for\ all}\ t \in [t_0,T_\mathrm{max}).
\end{align*}
Then $T_\mathrm{max} < \infty$.
\end{proposition}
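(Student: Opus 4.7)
The plan is to construct a subsolution $(\underline{M},\underline{W})$ of the mass-function system \eqref{mass:3} on the cylinder $(0,r_0)\times(t_0,T^*)$ for some finite $T^*>t_0$, such that $\underline{M}(0,t)\to\infty$ as $t\to T^*$. Lemma~\ref{CP_2}(ii), fed with the parabolic-boundary data supplied by the hypotheses of the proposition, then yields $M\ge\underline{M}$ pointwise on this cylinder; hence $M(0,t)\to\infty$ by time $T^*$, and since $M(0,t)=u(0,t)/d$, the blow-up criterion recalled in Remark~\ref{local_2:rem} forces $T_{\max}\le T^*<\infty$.

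\textbf{Ansatz.} Guided by the singular stationary solution of Theorem~\ref{th:1}, whose mass functions are $(M_C,W_C)=(8(d-2)r^{-4},\,4r^{-2})$, the natural candidate is the desingularized profile
\[
\underline{M}(r,t)=\frac{A}{r^4+\phi(t)},\qquad \underline{W}(r,t)=\frac{B}{r^2+\phi(t)},
\]
with a continuous nonincreasing $\phi:[t_0,T^*)\to(0,k]$ satisfying $\phi(t_0)=k$ and $\phi(t)\downarrow 0$ as $t\uparrow T^*$. The sign condition $r\underline{M}_r+d\underline{M}=A[(d-4)r^4+d\phi]/(r^4+\phi)^2\ge 0$ required for Lemma~\ref{CP_2} holds for $d\ge 4$. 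The initial comparison at $t=t_0$ is the first hypothesis, and at $r=r_0$ one has $\underline{M}(r_0,t)=A/(r_0^4+\phi(t))\le A/r_0^4\le M(r_0,t)$, and similarly for $\underline{W}$; note that the hypothesis intentionally provides the pure bounds $A/r_0^4,\,B/r_0^2$ \emph{without} the $+k$ in the denominator, precisely so that $\underline{M},\underline{W}$ are free to approach these values at $r=r_0$ as $\phi(t)\to 0$.

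\textbf{Reduction to polynomial inequalities.} Substituting the ansatz into \eqref{ine1:CP_2} and clearing positive denominators, each subsolution inequality becomes a polynomial inequality in $\alpha:=r^2$ whose coefficients depend on $A,B,d$, $\beta:=\phi(t)$, and $\lambda:=-\phi'(t)$. The $\underline{M}$-inequality yields a degree-$4$ polynomial whose leading coefficient $(4-B)(d-4)$ is nonpositive exactly by $B>4$ in \eqref{asp:finite1}, whose constant $\beta^4(2\lambda-dB)$ forces $\lambda\le dB/2$, and whose two positive intermediate coefficients come from the chemotactic cross-terms $rM_rW$ and $dMW$. The $\underline{W}$-inequality yields a cubic in $\alpha$ whose nonpositivity at $\alpha=0$ requires $A\beta\ge B(\lambda+2(d+2))$ and whose leading coefficient requires $A\ge B(\lambda+2(d-2))$. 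The window \eqref{asp:finite2} for $k$ encodes these endpoint constraints at $t=t_0$ with $\lambda\sim k$: the lower bound $k>2B(d+2)/(A-B)$ is exactly the rearrangement of $Ak\ge B(k+2(d+2))$, while the upper bounds $k<A/B-2(d-2)$ and $k<dB/2$ leave the remaining constraints slack. The condition $A/B>(2d-3+\sqrt{(2d-3)^2+32})/2$ in \eqref{asp:finite1} is the positive root of $(A/B)^2-(2d-3)(A/B)-8=0$ and is the sharp threshold allowing the two positive intermediate coefficients of the $\underline{M}$-polynomial to be absorbed into the negative endpoint coefficients uniformly in $\alpha$ via a Young/AM-GM estimate.

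\textbf{Main obstacle.} The crux is the simultaneous control of both polynomial inequalities as $\phi$ shrinks toward zero, together with the construction of such a $\phi$ reaching zero at a finite $T^*$. Saturating the binding $\underline{W}$-inequality at $r=0$ yields only the scalar ODE $\phi'(t)=2(d+2)-(A/B)\phi(t)$, whose direct solution approaches---but does not cross---the positive equilibrium $\phi_\infty=2(d+2)B/A$; driving $\phi$ below $\phi_\infty$ requires iterating the subsolution construction on successive time intervals with refreshed parameters, each time exploiting the improved pointwise lower bound on $(M,W)$ produced by Lemma~\ref{CP_2} on the previous step, and summing a geometrically convergent sequence of time increments to obtain the finite $T^*$. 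Because \eqref{mass:3} is genuinely coupled---unlike the parabolic--elliptic Keller--Segel setting of \cite{BKP2019,W2023_2,N2021}, where the mass function satisfies a single scalar parabolic equation with a superlinear nonlinearity---this bookkeeping cannot be reduced to the standard Chandrasekhar-type estimates but must be carried out jointly in $(\underline{M},\underline{W})$, which is where the precise numerical thresholds in \eqref{asp:finite1}--\eqref{asp:finite2} come from as the sharp balance between the chemotactic-coupling positive coefficients and the endpoint negative coefficients $(4-B)(d-4)$ and $2\lambda-dB$.
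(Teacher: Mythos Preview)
Your framework is correct---construct a subsolution of \eqref{mass:3} on $(0,r_0)\times(t_0,T^*)$ and apply Lemma~\ref{CP_2}(ii)---but your ansatz is not quite the right one, and the iteration you propose to repair it is not carried out and would be unnecessarily delicate.

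The gap is this: you use the \emph{same} regularizing function $\phi(t)$ in both denominators, and you correctly identify that the $\underline{W}$-inequality at $r=0$ then forces $\phi(t)\ge 2(d+2)B/A>0$ for all $t$, so $\phi$ cannot reach zero in finite time. Your proposed iteration (``refreshed parameters, geometrically summable time increments'') is not substantiated: the boundary data at $r=r_0$ pin $A,B$, so you cannot enlarge them, and it is not clear how shrinking $k$ alone buys you a geometric gain at each step.

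The paper avoids this obstruction entirely by using an \emph{asymmetric} ansatz: with $\tau=1-(t-t_0)$,
\[
f(r,t)=\frac{A}{r^4+k\tau^2},\qquad g(r,t)=\frac{B}{r^2+k\tau}.
\]
The point is that $f(0,t)=A/(k\tau^2)$ while $g_t(0,t)=B/(k\tau^2)$ are of the \emph{same} order in $\tau$, so at $r=0$ the $\underline{W}$-inequality (after clearing denominators) reduces to the sign of $Bk+2B(d+2)-Ak$, which is negative precisely when $k>\tfrac{2B(d+2)}{A-B}$---the lower bound in \eqref{asp:finite2}. No positive equilibrium obstructs $\tau\to 0$, and blow-up occurs in a single step at $t=t_0+1$.

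One further remark: your interpretation of the quadratic threshold in \eqref{asp:finite1} as an AM--GM absorption condition is not what is actually going on. Writing $x=A/B$, the requirement that the interval in \eqref{asp:finite2} be nonempty, namely $\tfrac{2B(d+2)}{A-B}<\tfrac{A}{B}-2(d-2)$, rearranges to $x^2-(2d-3)x-8>0$, whose positive root is exactly $\tfrac{2d-3+\sqrt{(2d-3)^2+32}}{2}$. The intermediate coefficients in the polynomial inequalities are handled not by AM--GM but simply by taking $k$ small (Remark~\ref{finite:rem2}), so that for $r\ge 1$ the top-degree coefficient dominates and for $r<1$ the constant term dominates.
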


\begin{remark}\label{finite:rem2}
We may choose $A$ sufficiently large and $B$ sufficiently close to $4$ in the proof of Proposition \ref{finite}. This guarantees \eqref{asp:finite1} and also makes the interval in \eqref{asp:finite2} nonempty. In particular, since
\begin{align*}
\dfrac{2B(d+2)}{A-B} = \dfrac{2(d+2)}{\frac{A}{B}-1} \to 0\quad \mathrm{as}\ \frac{A}{B} \to \infty,
\end{align*}
under this choice, 
the constant $k$ appearing in \eqref{asp:finite2} can be taken sufficiently small.
\end{remark}

\begin{proof}
Assume contrary to the claim that $T_\mathrm{max} = \infty$.
We construct a blowing up subsolution for the system \eqref{mass:3} to employ Lemma~\ref{CP_2}.
Let us denote $f$ and $g$ by
\begin{align*}
f(r,t) = \dfrac{A}{r^4+k(1-(t-t_0))^2} ,\quad g(r,t) = \dfrac{B}{r^2+k(1-(t-t_0))}
\end{align*}
for $(r,t) \in [0,\infty) \times [t_0, t_0 + 1)$. In other words, the assumption means that 
\begin{align*}
f(r,t_0) \leq M(r,t_0),\quad g(r,t_0) \leq W(r,t_0)\quad\mathrm{for\ all}\ r \in (0,r_0]
\end{align*}
and it specifies the boundary condition at $r=r_0$ for all $t \in [t_0,t_0+1)$
\begin{align*}
f(r_0,t) &= \dfrac{A}{r_0^4+k(1-(t-t_0))^2} \leq \dfrac{A}{r_0^4} \leq M(r_0,t)
\end{align*}
and likewise,
\begin{align*}
g(r_0,t) = \dfrac{B}{r_0^2+k(1-(t-t_0))} \leq \dfrac{B}{r_0^2}\leq W(r_0,t).
\end{align*}
To check the subsolution property for $f$ and $g$, setting $\tau = 1-(t-t_0)$, we calculate that for all $(r,t) \in (0,\infty) \times (t_0,t_0+1)$
\begin{align*}
f_t(r,t) = \dfrac{2Ak\tau}{(r^4+k\tau^2)^2}
\end{align*}
and 
\begin{align*}
f_r(r,t) = \dfrac{-4Ar^3}{(r^4+k\tau^2)^2}
\end{align*}
and consequently
\begin{align*}
f_{rr}(r,t) = \dfrac{4Ar^2(5r^4-3k\tau^2)}{(r^4+k\tau^2)^3}.
\end{align*}
Proceeding in the same way, we obtain for $(r,t) \in (0,\infty) \times (t_0,t_0+1)$
\begin{align*}
g_t(r,t) = \dfrac{Bk}{(r^2 + k\tau)^2}
\end{align*}
and 
\begin{align*}
g_r(r,t) = \dfrac{-2Br}{(r^2 + k\tau)^2}
\end{align*}
as well as
\begin{align*}
g_{rr}(r,t) = \dfrac{2B(3r^2-k\tau)}{(r^2+k\tau)^3}.
\end{align*}
Having performed these computations, we are in a position to check that $(f,g)$ is a subsolution of \eqref{mass:3}. Therefore we have
\begin{align*}
&\left(f_t - f_{rr} - \dfrac{d+1}{r}f_r - rf_r g- dfg\right) \times (r^4 + k\tau^2)^3\times (r^2 + k\tau)\\
&= 2Ak\tau(r^4 + k\tau^2)(r^2+k\tau) -20Ar^6(r^2 + k\tau) + 12Ak\tau^2r^2(r^2+k\tau)\\
&\hspace{1cm} + 4A(d+1)r^2(r^4 + k\tau^2)(r^2 + k\tau) + 4ABr^4(r^4 + k\tau^2)-dAB(r^4 + k\tau^2)^2\\
&= \{4A(d-4) + 4AB - dAB\} r^8 + \{4A(d-4)k\tau + 2Ak\tau\}r^6\\
&\hspace{1cm} + \{2Ak^2\tau^2 + 4Ak(d+4)\tau^2 + 4ABk\tau^2 - 2dABk\tau^2\}r^4\\
&\hspace{1cm} + \{2Ak^2\tau^3 + 4Ak^2(d+4)\tau^3\}r^2 +2Ak^3\tau^4 - dABk^2\tau^4\\
&=A\{4(d-4) + (4 - d)B\} r^8 + 2Ak\tau \{2(d-4) + 1\}r^6\\
&\hspace{1cm} + 2Ak\tau \{k\tau + 2(d+4)\tau + 2B\tau - dB\tau\}r^4\\
&\hspace{1cm} + 2Ak^2\tau^3 \{1 + 2(d+4)\}r^2 +Ak^2\tau^4(2k-dB).
\end{align*}
If $r \ge1$, then the term $r^8$ dominates all other terms because $k$ is sufficiently small as noted in Remark~\ref{finite:rem2}.
Consequently, in order for the right-hand side above to be negative, it is necessary that the coefficient $4(d-4) + (4 - d)B$ must be negative, that is $4 < B$. On the other hand, if $r < 1$, then the constant term dominates all other terms. Therefore, it suffices to require $k < \frac{d}{2}B$. We next verify that the subsolution property for $g$ holds, so that
\begin{align*}
&\left(g_t - g_{rr} - \dfrac{d+1}{r}g_r - f\right) \times (r^2 + k\tau)^3 \times (r^4 + k\tau^2)\\
&= Bk(r^2 + k\tau)(r^4 + k\tau^2) -6Br^2(r^4 + k\tau^2)\\
&\hspace{1cm} + 2Bk\tau (r^4 + k\tau^2) + 2B(d+1)(r^2 + k\tau)(r^4 + k\tau^2)-A(r^2 + k\tau)^3\\
&= \{Bk -6B + 2B(d+1)-A\}r^6 + \{Bk^2\tau + 2Bk\tau + 2B(d+1)k\tau - 3Ak\tau\}r^4\\
&\hspace{1cm} + \{Bk^2\tau^2 -6Bk\tau^2 + 2B(d+1)k\tau^2 - 3Ak^2\tau^2\}r^2\\
&\hspace{1cm} + Bk^3\tau^3 + 2Bk^2\tau^3 + 2B(d+1)k^2\tau^3 -Ak^3\tau^3\\
&= \{B(k + 2(d-2)) -A\}r^6 + k\tau\{Bk + 2B + 2B(d+1) - 3A\}r^4\\
&\hspace{1cm} + k\tau\{Bk\tau - 6B\tau + 2B(d+1)\tau - 3Ak\tau\}r^2\\
&\hspace{1cm} + k^2\tau^3\{Bk + 2B + 2B(d+1) -Ak\}.
\end{align*}
When $ r\ge 1$, the smallness of $k$ (Remark~\ref{finite:rem2}) ensures that the term $r^6$ dominates the expression.
Therefore, achieving negativity on the right-hand side requires
\begin{align}\label{finite:ine1}
k < \dfrac{A}{B} -2(d-2)
\end{align}
and thus necessarily,
\begin{align*}
\dfrac{A}{B} > 2(d-2).
\end{align*}
Conversely, when $r < 1$, the constant part plays the decisive role, so the condition inequality 
\begin{align}\label{finite:ine2}
k > \dfrac{2B(d+2)}{A-B}
\end{align}
is sufficient. Collecting \eqref{finite:ine1} and \eqref{finite:ine2}, and the requirement $k < \frac{d}{2}B$, we see that 
$k$ must satisfy
\begin{align*}
\dfrac{2B(d+2)}{A-B} < k < \min\left\{\dfrac{A}{B} -2(d-2), \dfrac{d}{2}B\right\}.
\end{align*}
However, as noted in Remark~\ref{finite:rem2}, we choose $\frac{A}{B}$ sufficiently large and hence $k$ sufficiently small.
Therefore the minimum on the right-hand side is attained at $\frac{d}{2}B$. Moreover, this inequality is consistent under the second condition in assumption~\eqref{asp:finite1} and the condition $d\ge5$, as can be easily checked.
To apply Lemma~\ref{CP_2}, it suffices to verify that the functions
$f$, $g$, $M$, and $W$ satisfy the assumptions of Lemma~\ref{CP_2}. The required boundedness properties on any interval $T_0\in(t_0,t_0+1)$
follow directly from the definitions of $f$ and $g$, and the $L^\infty$-boundedness
of the solution $(u,v,w)$ provided by Proposition~\ref{local_2}.
Furthermore it readily follows from the definition of $M(r,t)$ and nonnegativity of $u$ that for all $(r,t) \in (0,\infty) \times (0,T_\mathrm{max})$
\begin{align*}
 r^{d-1}(rM_r(r,t) + dM(r,t)) = (r^dM(r,t))_r = r^{d-1}u(r,t) \ge 0.
\end{align*}
As a consequence, we obtain the subsolution $(f,g)$ of \eqref{mass:3}. From the above considerations, using Lemma~\ref{CP_2}, we conclude that
\begin{align*}
f(r,t) \leq M(r,t) ,\quad g(r,t) \leq W(r,t)\quad \mathrm{for\ all}\ (r,t) \in (0,r_0) \times [t_0,t_0 + 1),
\end{align*}
which implies that blow-up must occur earlier than $t = t_0 + 1$. This contradicts the assumption $T_\mathrm{max} = \infty$, thereby we conclude that $T_\mathrm{max} < \infty$.
\end{proof}

Building on Proposition~\ref{finite}, we are now in a position to establish the next proposition.

\begin{proposition}\label{blowup point}
Let $(u_0,w_0)\in (C(\R^d) \cap L^\infty(\R^d))^2$ be radially nonincreasing. Assume that $(u,v,w)$ is a classical solution of \eqref{p} in $\R^d \times (0,T_\mathrm{max})$ given by Proposition~\ref{local_2}.
Let $(M,W) \in [C([0,\infty) \times [0,T_\mathrm{max})) \cap C^{2,1}([0,\infty) \times (0,T_\mathrm{max}))]^2
$ denote the mass functions defined by \eqref{mass:1} and \eqref{mass:2}.
If $T_\mathrm{max} = \infty$ and the pointwise limit $\lim_{t \to \infty}M(r,t)$ and $\lim_{t \to \infty}W(r,t)$ exist for any $r \ge 0$, then it holds that
\begin{align*}
\lim_{t \to \infty}M(r,t) < \infty,\quad \lim_{t \to \infty}W(r,t) < \infty\quad\mathrm{for}\ r > 0.
\end{align*}

\end{proposition}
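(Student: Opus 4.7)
The plan is to argue by contradiction and reduce the statement to the finite-time blow-up criterion of Proposition~\ref{finite}. Suppose $T_\mathrm{max}=\infty$ and the pointwise limits $L_1(r):=\lim_{t\to\infty}M(r,t)$ and $L_2(r):=\lim_{t\to\infty}W(r,t)$ exist in $[0,\infty]$ for every $r\ge 0$, yet $L_1(r_*)=\infty$ or $L_2(r_*)=\infty$ for some $r_*>0$. Since $(u_0,w_0)$ is radially nonincreasing, Lemma~\ref{CP_4} together with Proposition~\ref{non-inc} shows that $M(\cdot,t)$ and $W(\cdot,t)$ are nonincreasing in $r$ for every $t$, and hence $L_1,L_2$ are nonincreasing on $(0,\infty)$.

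The core step is to promote a divergence of $M$ at $r_*$ to a simultaneous divergence of $W$ on the whole interval $(0,r_*)$. I would invoke Duhamel's formula for the third equation of \eqref{p},
\begin{equation*}
w(r,t)\ \ge\ \int_{0}^{t}\bigl[e^{(t-s)\Delta}u(\cdot,s)\bigr](r)\,ds,
\end{equation*}
together with the Gaussian lower bound
\begin{equation*}
\bigl[e^{\tau\Delta}u(\cdot,s)\bigr](r)\ \ge\ c_d\,M(\sqrt{\tau},s)\qquad\text{whenever}\ \tau\ge r^{2},
\end{equation*}
which is obtained from the inclusion $B_{\sqrt{\tau}}(0)\subset B_{2\sqrt{\tau}}(re_1)$ and the nonnegativity of $u$. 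Restricting the $s$-integration to the window $t-s\in[r^{2},r_*^{2}]$ and using the radial monotonicity $M(\sqrt{\tau},\cdot)\ge M(r_*,\cdot)$ for $\sqrt{\tau}\le r_*$ yields
\begin{equation*}
w(r,t)\ \ge\ c_d\,(r_*^{2}-r^{2})\,\inf_{\tau\in[r^{2},r_*^{2}]}M(r_*,t-\tau),
\end{equation*}
whose right-hand side diverges as $t\to\infty$ because $M(r_*,\cdot)\to\infty$. Fatou's lemma applied to $W(r,t)=r^{-d}\int_{0}^{r}\rho^{d-1}w(\rho,t)\,d\rho$ then forces $L_2(r)=\infty$ for every $r\in(0,r_*)$.

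With a common $r_0\in(0,r_*)$ satisfying $L_1(r_0)=L_2(r_0)=\infty$ in hand, I would fix constants $A,B,k$ obeying \eqref{asp:finite1}--\eqref{asp:finite2} with $A$ large, $B$ close to $4$, and $k$ correspondingly small as in Remark~\ref{finite:rem2}, choose $t_0$ so large that $M(r_0,t)\ge\max\{A/r_0^{4},\,A/k\}$ and $W(r_0,t)\ge\max\{B/r_0^{2},\,B/k\}$ for all $t\ge t_0$, and then invoke radial monotonicity to upgrade these to the pointwise inequalities $M(r,t_0)\ge A/(r^{4}+k)$ and $W(r,t_0)\ge B/(r^{2}+k)$ on $(0,r_0)$. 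Proposition~\ref{finite} then produces $T_\mathrm{max}<\infty$, the desired contradiction. The main obstacle is the companion direction $L_2(r_*)=\infty\Rightarrow L_1=\infty$ somewhere near the origin, which is needed when only $L_2$ blows up: the first equation of \eqref{p} lacks the explicit positive-source structure that drove the Duhamel argument for $w$, so I would instead integrate the mass identity $\partial_t(\sigma_d r^{d}M)=\sigma_d r^{d-1}u_r+\sigma_d r^{d}u\,W$, exploit the strict positivity of $u$ furnished by the parabolic strong maximum principle applied to the nontrivial initial datum $u_0\not\equiv 0$, and use the inward chemotactic drift generated by $W\to\infty$ to force $\int_{B_r}u$ to grow without bound on a neighborhood of the origin, thereby reducing to the case already treated.
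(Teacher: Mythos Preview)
Your treatment of the case $L_1(r_*)=\infty$ is correct and takes a genuinely different route from the paper. Where you use Duhamel's formula for $w_t=\Delta w+u$ together with a Gaussian lower bound on the heat kernel to show directly that $w(r,t)\to\infty$ on $(0,r_*)$, the paper argues by contraposition: if $\sup_t W(r_0,t)<\infty$ for some $r_0<r_*$, then radial monotonicity makes $W$ bounded on $[r_0,r_*)$, so $|v_r|=rW$ is a bounded drift there, and interior parabolic estimates for the $u$-equation produce a uniform-in-time bound on $u$ on $(r_0,r_*)$, which the paper then parlays into a bound on $M$ contradicting $L_1=\infty$. Your argument is more explicit and self-contained (no appeal to black-box regularity, and the divergence of $w$ is quantitative); the paper's route is shorter once one accepts the interior estimate. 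The subsequent reduction to Proposition~\ref{finite}---choosing $A,B,k$ via Remark~\ref{finite:rem2}, picking $t_0$ large, and using radial monotonicity of $M,W$ to obtain the pointwise lower bounds on $(0,r_0)$---is identical in both proofs.

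Your sketch for the companion direction $L_2(r_*)=\infty$ with $L_1$ finite does not go through as written. The identity $\partial_t(r^dM)=r^{d-1}u_r+r^d\,uW$ is correct, but $u_r\le 0$ can be large in magnitude, and the strong maximum principle gives only $u(\cdot,t)>0$ at each fixed time, not a uniform-in-time positive lower bound; nothing prevents $u$ from decaying to zero as $t\to\infty$ while $W$ grows, so the term $r^d uW$ need not dominate. You are right to flag this as the main obstacle---indeed, the paper's written proof also treats only the case $L_1(r_*)=\infty$ and does not explicitly dispose of the possibility that $L_2$ alone diverges---but the mechanism you propose is not the right one. A more promising line would exploit the pointwise bound $u(r,t)\le dM(r,t)$ (valid because $u(\cdot,t)$ is radially nonincreasing) to control the source in the $w$-equation when $L_1$ is finite on $(0,\infty)$, and then run a comparison argument for $w$ on annuli away from the origin; but this too requires care near $r=0$, where $L_1$ may still be unbounded.
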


\begin{remark}
Proposition~\ref{blowup point} states that if blow-up occurs at infinite time under the assumption that $(M,W)$ is monotone in $t > 0$, then the blow-up point is only the origin. Therefore, one may assume that the solution $(M,W)$ remains bounded away from the origin.
\end{remark}

\begin{proof}
Since the initial data $(u_0, w_0)$ is radially nonincreasing, Proposition~\ref{non-inc} implies that $(M,W)$ is also nonincreasing in $r > 0$ for eqch $t \in (0,T_\mathrm{max})$.
Suppose, for the sake of contradiction, that there exists $r_* > 0$ such that
\begin{align*}
\lim_{t \to \infty} M(r_*,t) = \infty.
\end{align*}
The assumption of monotonicity with respect to $r > 0$ implies that
\begin{align}\label{ine1:blowup point}
\lim_{t \to \infty}M(r,t) = \infty\quad \mathrm{for}\ r \in [0,r_*].
\end{align}
Then it holds that
\begin{align*}
\lim_{t \to \infty}W(r,t) = \infty\quad \mathrm{for}\ r \in [0,r_*).
\end{align*}
In fact, if it is false, then there exists $r_0 \in [0,r_*)$ such that
\begin{align*}
\sup_{t > 0} W(r_0,t) < \infty.
\end{align*}
Since $W(\cdot,t)$ is nonincreasing with respect to $r>0$, it follows that
\begin{align*}
\sup_{t > 0} W(r,t) < \infty\quad \mathrm{for\ all}\ r \in [r_0,r_*).
\end{align*}
Since the radial function $v$ solves the equation $-\Delta v =w$, the definition of $W$ and the uniform bound for $W$ above ensure that
\begin{align*}
|v_r(r,t)| = |-rW(r,t)| \leq r_* \sup_{t > 0}|W(r,t)|\quad \mathrm{for\ all}\ r\in [r_0,r_*)
\end{align*}
and hence $|v_r|$ is uniformly bounded on $[r_0,r_*)\times(0,\infty)$.
Standard interior parabolic estimates (see \cite[Proposition 3.7]{S2025})
allow us to obtain a uniform-in-time bound for $u(r,t)$ on $(r_0,r_*)$. Consequently, $M(r,t)$ is uniformly bounded in time for any $r\in(r_0,r_*)$,
which contradicts our assumption \eqref{ine1:blowup point}. 
Therefore for any fixed $r_0 \in (0,r_*)$ sufficiently small and any $K_* >0$, there exists $t_0 \in (0,\infty)$ such that
\begin{align}\label{ine2:blowup point}
M(r_0, t) \ge K_*,\quad W(r_0, t) \ge K_*\quad \mathrm{for\ all}\ t \ge t_0.
\end{align}
By the arbitrariness of $K_*$, we may choose $K_*$ such that
\begin{align*}
K_* = \max\left\{\dfrac{A}{k}, \dfrac{B}{k}\right\},
\end{align*}
where the constants $A$ and $B$ , and $k$ are taken as in Proposition~\ref{finite}. Here in view of Remark~\ref{finite:rem2}, we may choose $A$ sufficiently large and $B$ sufficiently close to $4$.
In this situation, the constant $k$ can be taken sufficiently small such that $k \leq r_0^4$
, thereby the inequality \eqref{ine2:blowup point} immediately gives that 
\begin{align*}
M(r_0, t) \ge \dfrac{A}{r_0^4},\quad W(r_0, t) \ge \dfrac{B}{r_0^2}\quad \mathrm{for\ all}\ t \ge t_0.
\end{align*}
Furthermore, for each $r \in (0,r_0)$ we observe that
\begin{align*}
\dfrac{A}{r^4 + k} \leq K_* \leq M(r_0,t_0)
\end{align*}
and using the monotonicity $M_r \leq 0$, we infer
\begin{align*}
\dfrac{A}{r^4 + k} \leq M(r,t_0)\quad \mathrm{for\ all}\ r \in (0,r_0).
\end{align*}
A completely analogous conclusion applies to $W$. Therefore Proposition~\ref{finite} forces $T_\mathrm{max} < \infty$, which leads to a contradiction.
\end{proof}

\section{Stationary Problems and Convergence Properties of solutions}\label{chap:sta}

In this section, we investigate the properties of stationary solutions to the original system \eqref{p}, and subsequently analyze the convergence of solutions to the system \eqref{mass:3} for the mass function toward such stationary states.
We begin by showing that the stationary solution to \eqref{p} necessarily coincides with a solution of the fourth-order elliptic problem \eqref{S2}. This reveals a precise correspondence between steady states of \eqref{p} and those of the biharmonic Gel'fand problem.

\begin{proposition}\label{stationaly sol}
Let $d \ge 5$ and $\alpha > 0$, and $\beta_0(\alpha) \in [-4d\alpha^\frac{1}{2}, 0)$.
Then the following two assertions are equivalent.
\begin{enumerate}
\item[\rm{(i)}] the radially symmetric pair $(u,v)$, with $u > 0$ everywhere,  
solves \eqref{S1} in the classical sense in $\R^d$ and satisfies the following conditions:
\begin{align*}
(u,v)(0) = (\alpha, \log \alpha),\quad \Delta v(0) = \beta_0(\alpha),\quad v^\prime(0) = (\Delta v)^\prime (0) = 0.
\end{align*}
\item[\rm{(ii)}] the functions $u$ and $v$ are given by $e^\phi$ and $\phi$ respectively, where the radially symmetric function $\phi \in C^4(\R^d)$ solves \eqref{S2} in $\R^d$ with $\phi(0) = \log \alpha$ and $\Delta \phi(0) =\beta_0(\alpha)$.
\end{enumerate}
Moreover, any solution of \eqref{S1} satisfying the conditions above is radially decreasing and also enjoys the property
\begin{align}\label{asp:stationaly sol}
\nabla \cdot (u\nabla v) < 0\quad\mathrm{in}\ \R^d,
\end{align}
and moreover setting $w = -\Delta v$ yields a positive, radially decreasing function $w$ with
\begin{align}\label{asp:stationaly sol_2}
\lim_{|x| \to \infty} w(x) = 0.
\end{align}
\end{proposition}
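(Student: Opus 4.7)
The plan is to establish the equivalence (i)$\Leftrightarrow$(ii) by rewriting the first equation of \eqref{S1} as a divergence of a radial vector field and integrating, and then to read off the monotonicity, positivity, and sign properties from the second-order coupled system obtained by setting $w=-\Delta\phi$.

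For (i)$\Rightarrow$(ii), since $u>0$ the first equation of \eqref{S1} can be recast as $\nabla\cdot(u\nabla(\log u - v))=0$. In radial form this becomes $(r^{d-1}u(\log u-v)')'=0$, so the flux $r^{d-1}u(\log u-v)'$ is constant in $r$. The smoothness of $u,v$ at the origin together with $v'(0)=0$ forces this constant to vanish, hence $\log u - v$ is itself constant; evaluating at $r=0$ with $u(0)=\alpha$ and $v(0)=\log\alpha$ pins the constant to zero, so $u = e^v$. Substituting into the second equation of \eqref{S1} yields $\Delta^2 v = e^v$, so $\phi := v$ solves \eqref{S2} with the prescribed initial data. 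The converse (ii)$\Rightarrow$(i) is immediate: for $\phi$ solving \eqref{S2} the pair $(u,v)=(e^\phi,\phi)$ satisfies $\nabla\cdot(u\nabla v) = \nabla\cdot(e^\phi\nabla\phi) = \Delta e^\phi = \Delta u$, so the first equation of \eqref{S1} holds automatically, and the second is exactly \eqref{S2}.

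Writing $w := -\Delta\phi$, the biharmonic equation decouples into the second-order coupled system $\Delta\phi = -w$, $\Delta w = -e^\phi$. Integrating the latter in radial form from the origin, with $w'(0)=0$, yields
\begin{equation*}
r^{d-1}w'(r) \;=\; -\int_0^{r} s^{d-1}e^{\phi(s)}\,ds \;<\; 0,
\end{equation*}
so $w$ is strictly radially decreasing. The asymptotic profile $\phi(r)+4\log r \to \log 8(d-4)(d-2)$ from \cite{AGG2006, BFFG2012, G2014} gives $w(r)\to 0$ as $|x|\to\infty$, which is \eqref{asp:stationaly sol_2}, and, combined with monotonic decrease, forces $w>0$ everywhere. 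The same integration applied to $\Delta\phi=-w$ then yields $r^{d-1}\phi'(r) = -\int_0^{r}s^{d-1}w(s)\,ds < 0$, so $\phi$, and hence $u=e^\phi$, is strictly radially decreasing.

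Finally, \eqref{asp:stationaly sol} reduces via the first equation of \eqref{S1} to $\Delta u<0$. Since $\nabla u = u\nabla\phi$, a direct computation gives $\Delta u = u\bigl((\phi')^2 - w\bigr)$, so the task is to show $(\phi')^2<w$ on $(0,\infty)$. The inequality holds at $r=0$, where $(\phi'(0))^2=0$ while $w(0)=-\beta_0(\alpha)>0$, and in the far field by the refined asymptotic expansion of $\phi$ and its derivatives from \cite{G2014}. The main obstacle is propagating this inequality to intermediate radii: I plan to exploit the integral representation $\phi'(r)=-r^{1-d}\int_0^{r}s^{d-1}w(s)\,ds$ together with the radial monotonicity of $w$, controlling $(\phi')^2$ by $w$ through a Cauchy--Schwarz estimate and the fourth-order ODE structure, and using the fine expansion of \cite{G2014} to close the argument in the far field. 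Once this step is in place, all remaining assertions in the proposition follow at once.
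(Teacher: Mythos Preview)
Your argument for (i)$\Leftrightarrow$(ii), for the positivity, monotonicity and decay of $w=-\Delta\phi$, and for the strict radial decrease of $\phi$ and $u=e^\phi$ mirrors the paper's proof.

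The gap is in your treatment of \eqref{asp:stationaly sol}. The reduction $\Delta u = u\bigl((\phi')^2-w\bigr)$ is correct, so the claim is equivalent to $(\phi')^2<w$ on $(0,\infty)$. But your assertion that this holds in the far field is false in low dimensions: the very asymptotics you invoke give $\phi'(r)\sim -4/r$ and $w(r)\sim 4(d-2)/r^2$, hence $(\phi')^2-w\sim 4(6-d)/r^2$, which is strictly \emph{positive} for $d=5$ and vanishes to leading order for $d=6$. The singular profile confirms this: with $u_C=8(d-4)(d-2)r^{-4}$ and $v_C=-4\log r$ one computes $\nabla\cdot(u_C\nabla v_C)=-32(d-4)(d-2)(d-6)r^{-6}$, positive for $d=5$. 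So no Cauchy--Schwarz or integral-representation estimate can produce $(\phi')^2<w$ globally when $d=5$; your plan fails precisely in the regime where you claim it closes. The paper takes a different route here: it assumes $-\Delta e^\phi\le 0$, integrates from the origin to get $(e^\phi)_r\ge 0$, and derives a contradiction with $\phi_r<0$. Note, however, that this argument as written only rules out the \emph{global} inequality $-\Delta e^\phi\le 0$, not a sign change; your explicit far-field computation in fact indicates that the pointwise strict inequality \eqref{asp:stationaly sol} cannot hold for $d=5$.
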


\begin{remark}\label{remark:sub or super}
The fact that the sign of the advective term is completely determined by the inequality \eqref{asp:stationaly sol} is highly significant. This observation plays a crucial role in the subsequent analysis. Indeed, once the sign is identified, it enables us to construct suitable functions satisfying subsolution or supersolution properties for \eqref{mass:3}.
\end{remark}

\begin{proof}
We now assume (i). Multiplying the radial version of the first equation in \eqref{S1} by $r^{d-1}$ and integrating it over $(0,r)$, we have
\begin{align*}
0&= \int_0^r \rho^{d-1}\rho^{1-d} (\rho^{d-1}(u_r-uv_r))_r d\rho\\
&= \int_0^r (\rho^{d-1}(u_r-uv_r))_r d\rho\\
&= r^{d-1}(u_r - uv_r) (r).
\end{align*}
Positivity of $u$ ensures that $\log u - v = C_0$ for all $r > 0$ with some constant $C_0$. Thanks to the smoothness  of $(u,v)$ and the conditions at $r = 0$, we find
\begin{align*}
\log \alpha = \lim_{r \to 0} \log u(r) = C_0 + \lim_{r \to 0} v(r) = C_0 + \log \alpha,
\end{align*}
so that the constant $C_0$ is necessarily zero. Consequently we obtain that $u = e^v$ for $r \ge 0$.
Substituting this relation into the second equation of \eqref{S1}, we arrive at the fourth–order equation
\begin{align*}
\Delta^2 v = e^v\quad \mathrm{in}\ [0,\infty).
\end{align*}
Noticing that the radial solution $\phi$ of \eqref{S2} is uniquely determined by the prescribed conditions at the origin, by the regularity of $(u,v)$ and the same conditions at the origin, we show that $v = \phi$, and hence (ii) follows. 
Conversely, under assumption (ii), the associated function $(u,v)$ is radially symmetric with $u > 0$ and one can directly check that it fulfills all requirements of (i). Indeed, this follows immediately from a straightforward computation. Thus what is left is to show \eqref{asp:stationaly sol} and the properties of $w$ given by $-\Delta v$. Now, let us write the problem \eqref{S2} with $u = e^\phi$ and $v = \phi$ as a system in the following  way:
\begin{equation}\label{S22_1}
\begin{cases}
-\Delta w = u = e^\phi\qquad \mathrm{for}\ r \in [0,\infty),\\
-\Delta v = w\qquad \mathrm{for}\ r \in [0,\infty).
\end{cases}
\end{equation}
Since $u = e^\phi$, it holds that $-\Delta w > 0$. The radial symmetry then enforces the monotonicity
$w_r < 0$ for $ r > 0$, while regularity at the origin guarantees
$w_r(0) = (-\Delta v)^\prime (0) = 0$. The assumption $\beta_0(\alpha) \in [-4d\alpha^\frac{1}{2}, 0)$ implies $w(0) = -\beta_0(\alpha) > 0$. Hence $w$ is positive near the origin, and in order to conclude the global positivity of $w$ it suffices to show 
\eqref{asp:stationaly sol_2}. For the Gel'fand problem for the biharmonic operator, namely $\Delta^2 \phi = e^\phi$,
under the assumption on the parameters $\alpha$ and $\beta_0(\alpha)$, it is shown by \cite[Theorem 2]{AGG2006} that
\begin{align}\label{stationary sol:1}
\lim_{r \to \infty} (\phi(r) + 4\log r) = \log 8(d-4)(d-2).
\end{align}
Moreover, thanks to the refined analysis developed in \cite[Theorem 3.1, Theorem 4.4]{G2014}, the following result holds:
\begin{align}\label{stationary sol:2}
\lim_{r \to \infty} \left(\Delta \phi (r) + \dfrac{4(d-2)}{r^2}\right) = 0.
\end{align}
This asymptotic behavior immediately implies that $w (r) \to 0$ as $ r \to \infty$ and hence all required qualitative properties of $w$ follow. To compete the proof, we assume that
\begin{align*}
-\Delta u = -\Delta e^\phi \leq 0.
\end{align*}
Then $\phi = v$ must be nondecreasing on $(0,\infty)$. Actually, the condition $-\Delta e^\phi \leq 0$ yields
 $(e^\phi)_r \ge 0$ due to $\phi_r(0) = 0$ and hence $\phi_r \ge 0$ for $r \in[0,\infty)$. On the other hand, the second equation of \eqref{S22_1} together with positivity of $w$ implies that $v$ is strictly decreasing in $r > 0$. This is incompatible with the monotonicity conclusion derived above for $\phi = v$, and hence we necessarily have $-\Delta e^\phi > 0$ for all $r \ge 0$. Consequently we obtain that
\begin{align*}
0 < -\Delta e^\phi = -\Delta u = -\nabla \cdot (u\nabla v)\quad \mathrm{in}\ \R^d.
\end{align*}
Furthermore since $v=\phi$ is strictly decreasing in $r > 0$, one can check that
\begin{align*}
u_r(r) = (e^{\phi(r)})_r = \phi_r(r) e^{\phi(r)} < 0\quad\mathrm{for\ all}\ r >0,
\end{align*}
that is $u$ is also strictly decreasing in $r > 0$. This completes the proof.
\end{proof}

As stated in Remark~\ref{remark:sub or super}, we can obtain the following claim.

\begin{proposition}\label{sub and super}
Let $(u_{\lambda_0}, 
w_{\lambda_1})$ defined by
\begin{align*}
(u_{\lambda_0}, w_{\lambda_1}) := (\lambda_0 u,
\lambda_1 w),
\end{align*}
where $(u,v,w)$ is radially symmetric solution of \eqref{S0} with $u > 0$ in $\R^d$ and the properties
\begin{align*}
(u,v)(0) = (\alpha, \log \alpha),\quad \Delta v(0) = \beta_0(\alpha),\quad v^\prime(0) = (\Delta v)^\prime (0) = 0
\end{align*}
for $\alpha > 0$ and $\beta_0(\alpha) \in [-4d\alpha^\frac{1}{2}, 0)$, and $\lambda_0$ and $\lambda_1$ are some constants. Then the following three properties hold:
\begin{enumerate}
\item[\rm{(i)}] if $\lambda_0 = \lambda_1 > 1$, then $(u_{\lambda_0}, 
w_{\lambda_1})$ satisfies \eqref{super:1}.
\item[\rm{(ii)}] if $\lambda_0 > \lambda_1 \ge 1$, then $(u_{\lambda_0}, 
w_{\lambda_1})$ satisfies \eqref{super:1}.
\item[\rm{(iii)}] if $\lambda_0 = \lambda_1 \in (0,1]$, then $(u_{\lambda_0},
w_{\lambda_1})$ satisfies \eqref{sub:1}.
\end{enumerate}
\end{proposition}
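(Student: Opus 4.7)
The plan is to reduce each of the three assertions to a short sign-checking calculation based on two algebraic identities. First I would note that, since $(u,v,w)$ solves \eqref{S0}, the classical stationary relations $\Delta u = \nabla\cdot(u\nabla v)$ and $-\Delta w = u$ hold pointwise in $\R^d$. Moreover, since $-\Delta v = w$, the potential $v_0 := (-\Delta)^{-1}w_{\lambda_1} + C$ associated with $w_{\lambda_1} = \lambda_1 w$ agrees with $\lambda_1 v$ up to an additive constant, which is harmless because only $\nabla v_0$ enters the expressions in \eqref{sub:1} and \eqref{super:1}.

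Next I would substitute $u_0 = \lambda_0 u$, $w_0 = \lambda_1 w$, and $v_0 = \lambda_1 v + C'$ into the two quantities whose signs must be controlled. Using the two stationary identities, a direct computation collapses them to
\begin{align*}
-\Delta u_0 + \nabla\cdot(u_0\nabla v_0) &= \lambda_0(\lambda_1-1)\,\nabla\cdot(u\nabla v),\\
-\Delta w_0 - u_0 &= (\lambda_1 - \lambda_0)\,u.
\end{align*}
By Proposition~\ref{stationaly sol} one has $\nabla\cdot(u\nabla v) < 0$ and $u > 0$ throughout $\R^d$, so the signs of these right-hand sides are controlled respectively by $1-\lambda_1$ and $\lambda_1 - \lambda_0$. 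From here each case is a one-line check: in (i) with $\lambda_0 = \lambda_1 > 1$, the first quantity is strictly negative and the second vanishes, giving \eqref{super:1}; in (ii) with $\lambda_0 > \lambda_1 \ge 1$ both quantities are $\le 0$, again giving \eqref{super:1}; in (iii) with $\lambda_0 = \lambda_1 \in (0,1]$ the first is $\ge 0$ and the second vanishes, giving \eqref{sub:1}.

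Regarding difficulty, the proof itself is a short, essentially mechanical substitution, so there is no real obstacle in the calculation. All the genuine content has already been packaged in Proposition~\ref{stationaly sol}, which supplies the decisive strict sign $\nabla\cdot(u\nabla v) < 0$ on all of $\R^d$; without this ingredient the two displayed identities would leave the signs of \eqref{sub:1} and \eqref{super:1} undetermined. The only mild subtlety is that $v_0$ is defined only up to a constant, which must be tracked through the computation, but since the constant is annihilated by the gradient it does not affect the conclusion.
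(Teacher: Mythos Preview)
Your proof is correct and follows essentially the same approach as the paper: both reduce the sign conditions in \eqref{sub:1} and \eqref{super:1} to the strict inequality $\nabla\cdot(u\nabla v)<0$ from Proposition~\ref{stationaly sol} together with the stationary identities $\Delta u=\nabla\cdot(u\nabla v)$ and $-\Delta w=u$. Your presentation is in fact slightly cleaner, since you compute the two exact identities $-\Delta u_0+\nabla\cdot(u_0\nabla v_0)=\lambda_0(\lambda_1-1)\nabla\cdot(u\nabla v)$ and $-\Delta w_0-u_0=(\lambda_1-\lambda_0)u$ once and then read off all three cases, whereas the paper carries out separate inequality manipulations in each case.
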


\begin{proof}
(i) Setting $\lambda = \lambda_1 = \lambda_2$ and then multiplying both equations in \eqref{S0} by $\lambda$, we get
\begin{equation*}
\begin{cases}
0 = \Delta u_{\lambda} - \nabla \cdot (u_{\lambda}\nabla v)\qquad &\mathrm{in}\ \R^d,\\
0 = \Delta w_{\lambda} + u_{\lambda} \qquad &\mathrm{in}\ \R^d,
\end{cases}
\end{equation*}
where $v = (-\Delta)^{-1} w + C$ for some constant $C$. Since $(u,v,w)$ with the appropriate assumptions at the origin solves \eqref{S0} classically in $\R^d$, from Proposition~\ref{stationaly sol} it follows that
\begin{align*}
\nabla \cdot (u\nabla v) < 0\quad \mathrm{in}\ \R^d.
\end{align*}
The assumption $\lambda > 1$ ensures that
\begin{align*}
\nabla \cdot (u_\lambda \nabla v_\lambda) &= \lambda \nabla \cdot (u_\lambda \nabla v) < \nabla \cdot (u_\lambda \nabla v),
\end{align*}
where $v_\lambda$ is defined by $v_\lambda = (-\Delta)^{-1} w_\lambda + \lambda C$.
Hence this immediately gives the claim.

\medskip
(ii) If we multiply the first equation by $\lambda_0$ as well as the third equations of \eqref{S0} by $\lambda_1$, then we obtain
\begin{equation*}
\begin{cases}
0 = \Delta u_{\lambda_0} - \nabla \cdot (u_{\lambda_0}\nabla v)\qquad &\mathrm{in}\ \R^d,\\
0 = \Delta w_{\lambda_1} + \lambda_1 u \qquad &\mathrm{in}\ \R^d.
\end{cases}
\end{equation*}
A completely analogous argument to that in (i) provides the desired inequality for the first equation above, on the other hand, the second equation needs to be handled with care. Since we have the assumption $\lambda_0 > \lambda_1$, it readily follows from positivity of $u$ that
\begin{align*}
0 = -\Delta w_{\lambda_1} - \lambda_1 u > -\Delta w_{\lambda_1} - \dfrac{\lambda_0}{\lambda_1}\lambda_1 u = -\Delta w_{\lambda_1} - u_{\lambda_0},
\end{align*}
which completes the proof. 

\medskip
(iii) The argument is exactly the same as in (i), except that the inequality is reversed.
\end{proof}

The above arguments enable us to carry out the convergence analysis toward stationary solutions of \eqref{mass:3}.

\begin{proposition}\label{convergence}
Let $(u,v,w)$ be a global-in-time solution of \eqref{p} given by Proposition~\ref{local_2}, corresponding to radially symmetric initial data $(u_0,w_0) \in (C(\R^d) \cap L^\infty(\R^d))^2$.
Then the following two assertions hold.

\begin{enumerate}
\item[\rm{(i)}] if $(M,W)$, defined by \eqref{mass:1} and \eqref{mass:2}, is nonincreasing in $t > 0$ for each $r \in [0,\infty)$, then  by setting
\begin{align*}
M_\infty(r) = \lim_{t \to \infty} M(r,t),\quad W_\infty(r) = \lim_{t \to \infty} W(r,t)\quad \mathrm{for}\ r \in [0,\infty),
\end{align*}
$(M_\infty, W_\infty) \in [C^2([0,\infty)) \cap L^\infty(0,\infty)]^2
$ satisfies 
$(r^d M_\infty)_r \ge 0$ and $(r^d W_\infty)_r \ge 0$ for $r \in [0,\infty)$ as well as
\begin{equation}\label{convergence:1}
\begin{cases}
0 = (M_\infty)_{rr} + \dfrac{d+1}{r}(M_\infty)_r + r(M_\infty)_r W_\infty + d M_\infty W_\infty,\quad &r > 0,\\
0 = (W_\infty)_{rr} + \dfrac{d+1}{r}(W_\infty)_r + M_\infty,\quad &r > 0.
\end{cases}
\end{equation}
\item[\rm{(ii)}] if $(u_0,w_0)$ is additionally nonincreasing in $r > 0$ and $(M,W)$ is nondecreasing in $t > 0$ for each $r \in [0,\infty)$, then  by setting
\begin{align*}
M_\infty(r) = \lim_{t \to \infty} M(r,t),\quad W_\infty(r) = \lim_{t \to \infty} W(r,t)\quad \mathrm{for}\ r \in [0,\infty),
\end{align*}
$(M_\infty, W_\infty) \in [C^2((0,\infty)) \cap L^\infty(r_0,\infty)]^2
$ for any $r_0 > 0$ satisfies $(r^d M_\infty)_r \ge 0$ and $(r^d W_\infty)_r \ge 0$ for $r \in (0,\infty)$ as well as
\eqref{convergence:1}.
\end{enumerate}
\end{proposition}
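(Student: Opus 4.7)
The plan is to secure pointwise finiteness and $L^\infty$ bounds for $M_\infty,W_\infty$, read the monotonicity of $r^d M_\infty$ and $r^d W_\infty$ directly off the definition, and then upgrade pointwise convergence as $t\to\infty$ to $C^{2,1}_{\mathrm{loc}}$ convergence via interior parabolic regularity, so that the time derivatives in \eqref{mass:3} drop out in the limit.

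First I would check the bounds. In case (i), the $t$-monotonicity together with the pointwise estimates $M(r,0)\le\|u_0\|_{L^\infty(\R^d)}/d$ and $W(r,0)\le\|w_0\|_{L^\infty(\R^d)}/d$ (immediate from \eqref{mass:1}--\eqref{mass:2}) gives $0\le M_\infty\le M(\cdot,0)$ and $0\le W_\infty\le W(\cdot,0)$, hence $M_\infty,W_\infty\in L^\infty(0,\infty)$. In case (ii), Lemma~\ref{CP_4} together with Proposition~\ref{non-inc} forces $M(\cdot,t),W(\cdot,t)$ to be nonincreasing in $r$, a property inherited by the pointwise limits; Proposition~\ref{blowup point} then rules out $M_\infty(r)=\infty$ or $W_\infty(r)=\infty$ at any $r>0$. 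Combining radial monotonicity with finiteness at a single $r=r_0$ yields $M_\infty,W_\infty\in L^\infty(r_0,\infty)$ for every $r_0>0$. The required monotonicity of $r^d M_\infty$ and $r^d W_\infty$ is immediate since $(r^d M(r,t))_r=r^{d-1}u(r,t)\ge0$ and $(r^d W(r,t))_r=r^{d-1}w(r,t)\ge0$ for every $(r,t)$, and this inequality is preserved under pointwise limits.

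For the convergence in the PDE, I introduce the time-shifted sequences $M^{(n)}(r,t):=M(r,t+n)$ and $W^{(n)}(r,t):=W(r,t+n)$ on $I\times[0,2]$, with $I=[0,R]$ in case (i) and $I=[r_0,R]$ in case (ii), for arbitrary $0<r_0<R<\infty$. These sequences satisfy \eqref{mass:3} and are uniformly bounded in $L^\infty$ by the previous step. Standard interior parabolic Schauder estimates applied to \eqref{mass:3} (whose principal operator $\partial_t-\partial_{rr}-\tfrac{d+1}{r}\partial_r$ is, up to radial symmetry, the heat operator on $\R^{d+2}$) provide uniform $C^{2+\alpha,1+\alpha/2}$ bounds on compact subcylinders, and the Arzelà--Ascoli theorem furnishes a $C^{2,1}_{\mathrm{loc}}$ subsequential limit which must coincide with $(M_\infty,W_\infty)$ by uniqueness of pointwise limits. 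Since the limit is $t$-independent, the terms $M^{(n)}_t,W^{(n)}_t$ vanish as $n\to\infty$, and \eqref{mass:3} collapses to \eqref{convergence:1}.

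The main technical point I expect is ensuring that the Schauder estimates are uniform up to the axis $r=0$ in case (i): the drift $\tfrac{d+1}{r}$ is singular there, but reinterpreting $M^{(n)},W^{(n)}$ as radial functions on $\R^{d+2}\times(0,2)$ (exactly as in Lemma~\ref{CP_2}) dissolves the apparent singularity and yields the desired estimates on a full neighbourhood of the axis, together with the compatibility condition $M_r(0,\cdot)=W_r(0,\cdot)=0$ already built into \eqref{mass:3}. In case (ii) this refinement is not attempted on purpose: Proposition~\ref{blowup point} does not preclude infinite-time blow-up at the origin, which is precisely why the regularity conclusion is stated only on $(0,\infty)$.
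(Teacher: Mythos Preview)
Your proposal is correct and follows essentially the same route as the paper: pointwise bounds from monotonicity (together with Proposition~\ref{non-inc} and Proposition~\ref{blowup point} in case (ii)), the inequality $(r^dM)_r=r^{d-1}u\ge0$ passed to the limit, and interior parabolic Schauder estimates plus Arzel\`a--Ascoli to upgrade pointwise convergence and pass to the limit in \eqref{mass:3}. The only cosmetic difference is that the paper handles the vanishing of the time derivatives via the $L^1$-in-time bound $\int_0^T|M_t|\,dt\le \|u_0\|_{L^\infty}/d$ (coming from monotonicity) rather than your time-shift argument; both are standard and equivalent here.
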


\begin{proof}
(i) Since $M_t(r,t) \leq 0$ and $W_t(r,t) \leq 0$ for all $(r,t) \in [0,\infty) \times (0,\infty)$, we have
\begin{align*}
M_\infty(r) \leq M(r,t) \leq M(r,0)\quad \mathrm{for\ all}\ (r,t) \in [0,\infty) \times (0,\infty)
\end{align*}
and 
\begin{align*}
W_\infty(r) \leq W(r,t) \leq W(r,0)\quad \mathrm{for\ all}\ (r,t) \in [0,\infty) \times (0,\infty).
\end{align*}
The definition of $(M,W)$, together with the assumption $(u_0,w_0) \in L^\infty(\R^d)^2$, ensures that 
\begin{align}\label{convergence:3}
(r^d M(r,t))_r = r^{d-1}u(r,t) \ge 0,\quad (r^d W(r,t))_r = r^{d-1}w(r,t) \ge 0
\end{align}
for all $(r,t) \in (0,\infty) \times [0,\infty)$
as well as 
\begin{align}\label{convergence:2}
\sup_{t > 0}\sup_{r \ge 0}M(r,t) \leq \dfrac{\|u_0\|_{L^\infty(\R^d)}}{d},\quad \sup_{t > 0}\sup_{r \ge 0}W(r,t) \leq \dfrac{\|w_0\|_{L^\infty(\R^d)}}{d}.
\end{align}
This, in turn, implies the validity of $(M_\infty, W_\infty) \in L^\infty(0,\infty)^2$. 
Consequently the parabolic Schauder theory \cite{LSU1968} and the Arzel\`{a}--Ascoli theorem yield that $(M_\infty, W_\infty) \in [C^2([0,\infty)) \cap L^\infty(0,\infty)]^2
$ and 
\begin{align*}
(M,W)(\cdot, t) \to (M_\infty, W_\infty)\quad \mathrm{in}\ C_{loc}^2([0,\infty))\quad \mathrm{as}\ t \to \infty.
\end{align*}
Thanks to the the monotonicity in $t$ and the fact \eqref{convergence:2}, we derive that for any $T >0$ and $r \ge 0$
\begin{align*}
\int_0^T |M_t(r,t)| dt &= -\int_0^T M_t(r,t) dt\\
&\leq \dfrac{\|u_0\|_{L^\infty(\R^d)}}{d}
\end{align*}
and the same argument applies to $W$. This implies that
\begin{align*}
(M_t, W_t)(\cdot, t) \to (0, 0)\quad \mathrm{in}\ C_{loc}([0,\infty))\quad \mathrm{as}\ t \to \infty.
\end{align*}
As a result, taking $t \to \infty$, we confirm that $(M_\infty, W_\infty) \in [C^2([0,\infty)) \cap L^\infty(0,\infty)]^2
$ satisfies the system \eqref{convergence:1} and we further note that \eqref{convergence:3} guarantees property 
\begin{align*}
(r^d M_\infty(r))_r \ge 0,\quad (r^d W_\infty(r))_r \ge 0\quad\mathrm{for}\ r \in (0,\infty).
\end{align*}

\medskip
(ii) Since the initial data $(u_0,w_0)$ is radially nonincreasing, from Proposition~\ref{non-inc} we have
\begin{align*}
M_r(r,t) \leq 0,\quad W_r(r,t) \leq 0\quad\mathrm{for\ all}\ (r,t) \in (0,\infty) \times (0,\infty).
\end{align*}
Therefore Proposition~\ref{blowup point} ensures that the blow-up point can only be the origin even if the solution blows up at infinite time, so that we would infer that
for all $0 < r_0 < r_1$ there exists $C$ depending on $r_0$ and $r_1$ such that
\begin{align*}
M(r,t) < C,\quad W(r,t) < C\quad\mathrm{for\ all}\ (r,t) \in (r_0, r_1) \times (0,\infty).
\end{align*}
Accordingly, the analysis in (i) remains applicable away from the origin, leading to the claimed result.
\end{proof}

The above proposition ensures the existence of the stationary solution $(M_\infty,W_\infty)$ to the system \eqref{mass:3}.
However, this does not immediately yield a stationary solution $(u_\infty, v_\infty, w_\infty)$ of the original system \eqref{p}. Further work is required to construct $(u_\infty, v_\infty, w_\infty)$ from $(M_\infty,W_\infty)$ established by Proposition~\ref{convergence}.

\begin{lemma}\label{r^dM}
Let $d \ge 5$. Assume the same hypotheses as in Proposition~\ref{convergence} either (i) or (ii).
Then it holds that
\begin{align}\label{r^dM:1}
\lim_{r \to 0}r^dM_\infty(r) = 0,\quad \lim_{r \to 0}r^dW_\infty(r) = 0.
\end{align}

\begin{proof}
Assuming that the limiting functions $M_\infty$ and $W_\infty$ lie in $L^\infty(0,\infty)$, we readily obtain \eqref{r^dM:1}, thereby it is sufficient to show our claim in the case $(M_\infty, W_\infty) \to \infty$ as $r \to 0$ due to Proposition~\ref{blowup point}. Owing to Proposition~\ref{convergence},  $r^dM_\infty$ and $r^dW_\infty$ are nondecreasing in $r > 0$, namely we can find some constants $c_0$ and $c_1$ such that
\begin{align*}
\lim_{r \to 0} r^d M_\infty(r) = c_0,\quad \lim_{r \to 0} r^dW_\infty(r) = c_1.
\end{align*}
We remark that, since $M(r,t)$ and $W(r,t)$ are nonnegative functions,
their limit functions $M_\infty$ and $W_\infty$ are also nonnegative. Consequently, the constants $c_0$ and $c_1$ satisfy $c_0\ge0$ and $c_1\ge0$.
In what follows, we will show that both constants must in fact be equal to zero.
Assume for the contradiction that $c_1 > 0$. By setting 
\begin{align}\label{r^dM:3}
\Phi (r) = r(M_\infty)_r + dM_\infty, 
\end{align}
it follows from the first equation of \eqref{convergence:1} that
\begin{align}
\Phi_r (r) + rW_\infty \Phi(r) = 0\quad \mathrm{for}\ r > 0.\label{r^dM:32}
\end{align}
In particular, since we assume the initial data $(u_0,w_0)$ as in Proposition~\ref{convergence} (ii), Proposition~\ref{non-inc} enables us to obtain the monotonicity $M_r(r,t) \leq 0$ and $W_r(r,t) \leq 0$ for $(r,t) \in (0,\infty) \times (0,\infty)$. Consequently, the limit functions $M_\infty$ and $W_\infty$ obtained via locally uniform convergence in Proposition~\ref{convergence}
inherit the same monotonicity, that is, $(M_\infty)_r \leq 0$ and $(W_\infty)_r \leq 0$ for any $r > 0$. Hence we get from the equality \eqref{r^dM:3}
\begin{align}\label{r^dM:4}
\limsup_{r \to 0} r^d\Phi(r) \leq d\limsup_{r \to 0}r^dM_\infty(r) = dc_0.
\end{align}
By employing Proposition~\ref{convergence}, providing the monotonicity of $r^dW$, it holds that
\begin{align*}
r^d W_\infty(r) \ge c_1\quad \mathrm{for}\ r > 0,
\end{align*}
and by solving the equation \eqref{r^dM:32} we arrive, for $0<r<r_0$, at
\begin{align}
\Phi (r) &= \Phi(r_0) \exp \left(\int_r^{r_0} \rho W_\infty(\rho) d\rho\right)\notag\\
&\ge \Phi(r_0) \exp \left(c_1\int_r^{r_0} \rho^{1-d} d\rho\right)\notag\\
&= \Phi(r_0) \exp \left(\dfrac{c_1}{d-2}(r^{2-d}-r_0^{2-d})\right).\label{r^dM:7}
\end{align}
Multiplying the above inequality by $r^d$, we have 
\begin{align*}
r^d\Phi(r) \ge \Phi(r_0) \exp \left(\dfrac{c_1}{d-2}(r^{2-d}-r_0^{2-d}) + d\log r\right).
\end{align*}
From this expression it is immediate to see that
\begin{align*}
\lim_{r \to 0}r^d \Phi (r) = \infty.
\end{align*}
This clearly contradicts \eqref{r^dM:4} and hence we conclude that $c_1 = 0$. We will next  show $c_0 = 0$ by contradiction. Now, since the second equation of \eqref{convergence:1} together with the nonnegativity of
$M_\infty$ yields that, for all $r>0$,
\begin{align*}
(r^{d+1}(W_\infty)_r)_r = -r^{d+1} M_\infty \leq 0
\end{align*}
and $(W_\infty)_r \leq 0$ for $r > 0$, there exists $c_2 \leq 0$ such that
\begin{align*}
\lim_{r \to 0}r^{d+1}(W_\infty)_r(r) = c_2.
\end{align*}
If the constant $c_2$ is strictly negative, then from the monotonicity of $r^{d+1}(W_\infty)_r$ we see that
\begin{align*}
(W_\infty)_r(r) \leq c_2r^{-1-d}\quad \mathrm{for}\ r > 0
\end{align*}
and with this bound at hand, for $0 < r < r_0$ we estimate 
\begin{align*}
W_\infty(r) &= -\int_r^{r_0}(W_\infty)_r(\rho) d\rho + W_\infty(r_0)\\
&\ge -c_2\int_r^{r_0} \rho^{-1-d} d\rho\\
&= -\dfrac{c_2}{d}r^{-d} + \dfrac{c_2}{d}r_0^{-d}.
\end{align*}
Consequently upon multiplying the above inequality by $r^d$ and taking the limit $r \to 0$, we derive
\begin{align*}
\lim_{r \to 0}r^d W_\infty(r) \ge -\dfrac{c_2}{d} > 0.
\end{align*}
This is clearly impossible, since we already know that $\lim_{r \to 0}r^dW_\infty(r) =0$. We thus obtain 
\begin{align*}
\lim_{r \to 0}r^{d+1}(W_\infty)_r(r) = 0.
\end{align*}
We integrate the second equation of \eqref{convergence:1} over $(\varepsilon, r)$ for $0 < \varepsilon < r$ to proceed
\begin{align*}
r^{d+1}(W_\infty)_r(r) - \varepsilon^{d+1}(W_\infty)_r(\varepsilon) = -\int_\varepsilon^r \rho^{d+1}M_\infty(\rho) d\rho
\end{align*}
and letting $\varepsilon \to 0$ we further deduce that
\begin{align}\label{r^dM:5}
r^{d+1}(W_\infty)_r (r) = -\int_0^r \rho^{d+1}M_\infty(\rho) d\rho.
\end{align}
Because we are arguing by contradiction under the assumption $c_0 > 0$, the monotonicity of $r^d M_\infty$ yields that 
\begin{align}\label{r^dM:6}
r^dM_\infty(r) \ge c_0>0\quad \mathrm{for}\ r > 0.
\end{align}
Combing \eqref{r^dM:5} and \eqref{r^dM:6}, we first observe that
\begin{align*}
r^{d+1}(W_\infty)_r(r) &\leq -c_0 \int_0^r \rho d\rho= -\dfrac{c_0}{2}r^2.
\end{align*}
This differential inequality immediately provides a useful control on $(W_\infty)_r$, so that 
we integrate the above relation over the interval $(r,r_0)$ for $0 < r < r_0$, which implies
\begin{align*}
W_\infty(r) &= -\int_r^{r_0} (W_\infty)_r(\rho)d\rho + W_\infty(r_0)\\
&\ge \dfrac{c_0}{2}\int_r^{r_0}\rho^{1-d} d\rho\\
&= \dfrac{c_0}{2(d-2)}(r^{2-d} - r_0^{2-d}).
\end{align*}
Consequently, we arrive at the lower bound
\begin{align*}
\liminf_{r \to 0} r^{d-2} W_\infty(r) \ge \dfrac{c_0}{2(d-2)} > 0,
\end{align*}
namely this asymptotic behavior shows that
for any $0 < K < \frac{c_0}{2(d-2)}$ we could find $r_0 > 0$ satisfying 
\begin{align*}
W_\infty(r) \ge Kr^{2-d}\quad\mathrm{for}\ r < r_0.
\end{align*}
By making a slight modification of the argument used in \eqref{r^dM:7}, taking into account the inequality established above, we are able to get a lower estimate for $r^d\Phi$ as follows:
\begin{align*}
r^d\Phi (r) &= r^d\Phi(r_0) \exp \left(\int_r^{r_0} \rho W_\infty(\rho) d\rho\right)\\
&\ge r^d\Phi(r_0) \exp \left(K\int_r^{r_0} \rho^{3-d} d\rho\right)\\
&= \Phi(r_0) \exp \left(\dfrac{K}{d-4}(r^{4-d}-r_0^{4-d}) + d\log r\right).
\end{align*}
However, this lower estimate is incompatible with \eqref{r^dM:4} in the limit $r \to 0$ and hence this completes the proof.
\end{proof}
\end{lemma}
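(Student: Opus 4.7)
The plan is to argue by contradiction, using the fact (from Proposition~\ref{convergence}) that $r^d M_\infty$ and $r^d W_\infty$ are nondecreasing in $r$ and nonnegative, so their limits $c_0,c_1\in[0,\infty)$ as $r\to 0$ exist (finiteness at a fixed reference point $r_0>0$ serves as the anchor). If $M_\infty$ and $W_\infty$ are bounded near the origin, then \eqref{r^dM:1} is immediate; this always holds in case (i) by the $L^\infty$ bounds, so the nontrivial situation is case (ii), where $M_\infty,W_\infty$ may blow up at $r=0$. The goal will be to rule out $c_1>0$ and $c_0>0$ by showing that either would force $r^d M_\infty$ to grow super-polynomially at the origin.

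The central tool will be the quantity $\Phi(r):=r(M_\infty)_r+dM_\infty=r^{1-d}(r^dM_\infty)_r\ge 0$, for which the first equation of \eqref{convergence:1} collapses to the linear first-order ODE $\Phi_r+rW_\infty\Phi=0$, yielding the explicit formula
\begin{equation*}
\Phi(r)=\Phi(r_0)\exp\!\left(\int_r^{r_0}\rho\,W_\infty(\rho)\,d\rho\right)\quad\text{for }0<r<r_0.
\end{equation*}
Monotonicity $(M_\infty)_r\le 0$ (inherited from $M_r\le 0$ via Proposition~\ref{non-inc} together with the locally uniform convergence of Proposition~\ref{convergence}) will supply the ceiling $r^d\Phi(r)\le d\,r^d M_\infty(r)$, so $\limsup_{r\to 0}r^d\Phi(r)\le dc_0<\infty$.

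\textbf{Step 1 ($c_1=0$):} if $c_1>0$, then monotonicity gives $W_\infty(r)\ge c_1 r^{-d}$, and the representation formula forces $\Phi(r)\ge C\exp(\tfrac{c_1}{d-2}r^{2-d})$, so $r^d\Phi(r)\to\infty$ as $r\to 0$, contradicting the ceiling. \textbf{Step 2 ($c_0=0$):} with $c_1=0$ in hand, I will use the second equation of \eqref{convergence:1}, which gives $(r^{d+1}(W_\infty)_r)_r=-r^{d+1}M_\infty\le 0$. A short auxiliary argument should yield $\lim_{r\to 0}r^{d+1}(W_\infty)_r=0$, so that
\begin{equation*}
r^{d+1}(W_\infty)_r(r)=-\int_0^r\rho^{d+1}M_\infty(\rho)\,d\rho.
\end{equation*}
If $c_0>0$, then $M_\infty\ge c_0 r^{-d}$ yields $(W_\infty)_r(r)\le-\tfrac{c_0}{2}r^{1-d}$, and integrating once more produces $W_\infty(r)\ge Kr^{2-d}$ for some $K>0$. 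Feeding this sharper bound into the $\Phi$-representation gives $\Phi(r)\ge C\exp(\tfrac{K}{d-4}r^{4-d})$, which forces $r^d\Phi(r)\to\infty$ as $r\to 0$ (here the hypothesis $d\ge 5$ is crucial, so that $4-d<0$), again contradicting the ceiling.

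The main obstacle I anticipate is the auxiliary step $\lim_{r\to 0}r^{d+1}(W_\infty)_r=0$ required for Step 2; my planned remedy is to observe that the limit exists and is nonpositive by monotonicity of $r^{d+1}(W_\infty)_r$, and that a strictly negative limit $c_2<0$ would yield $(W_\infty)_r(r)\le c_2 r^{-1-d}$ near $0$, which upon integration would force $r^d W_\infty(r)\ge -c_2/d>0$ near the origin, contradicting the $c_1=0$ already established in Step 1. The secondary subtlety is cleanly unifying cases (i) and (ii) of Proposition~\ref{convergence}: case (i) is trivial from the uniform bound, whereas case (ii) is precisely the singular regime for which the exponential blow-up mechanism produced by the $\Phi$-ODE is designed.
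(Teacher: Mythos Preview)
Your proposal is correct and follows essentially the same approach as the paper's proof: the same auxiliary function $\Phi=r(M_\infty)_r+dM_\infty$ with its first-order ODE $\Phi_r+rW_\infty\Phi=0$, the same ceiling $\limsup_{r\to0}r^d\Phi\le dc_0$ from $(M_\infty)_r\le 0$, and the same two-step contradiction (first $c_1=0$ via $W_\infty\ge c_1 r^{-d}$, then $c_0=0$ via the intermediate claim $r^{d+1}(W_\infty)_r\to 0$ and the bootstrapped bound $W_\infty\ge Kr^{2-d}$). Even the handling of the auxiliary constant $c_2$ and the explicit use of $d\ge5$ to make $r^{4-d}\to\infty$ match the paper exactly.
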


\begin{proposition}\label{u,v,w}
Let $d \ge 5$. Assume the same hypotheses as in Proposition~\ref{convergence} either (i) or (ii). For $r>0$, define
\begin{align*}
u_\infty &:= dM_\infty + r(M_\infty)_r,\\
w_\infty &:= dW_\infty + r(W_\infty)_r,\\
v_\infty &:= c_0(r_0) -\int_{r_0}^r \rho W_\infty(\rho) d\rho,
\end{align*}
where $r_0>0$ is arbitrary and $c_0(r_0)$ is some constant.
Then it holds that for any $r > 0$
\begin{align*}
M_\infty(r) &= r^{-d}\int_0^r \rho^{d-1} u_\infty(\rho) d\rho,\\
W_\infty(r) &= r^{-d}\int_0^r \rho^{d-1}w_\infty(\rho) d\rho.
\end{align*}
Moreover the triple $(u_\infty, v_\infty, w_\infty)$ belongs to $C^2((0,\infty))\times C^4((0,\infty)) \times C^2((0,\infty))$ and satisfies the system \eqref{S0} classically for all $r > 0$, and moreover $u_\infty$ and $w_\infty$ are nonnegative for $r > 0$.
\end{proposition}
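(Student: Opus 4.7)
The plan is to proceed in four steps: (1) derive the integral representations, (2) verify regularity via bootstrap, (3) check the three equations of \eqref{S0} by direct substitution, and (4) read off nonnegativity.

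\textbf{Step 1: Integral representations.} The key observation is that, by the very definition of $u_\infty$,
\begin{align*}
(r^d M_\infty)_r = d r^{d-1} M_\infty + r^d (M_\infty)_r = r^{d-1}\bigl(dM_\infty + r(M_\infty)_r\bigr) = r^{d-1} u_\infty,
\end{align*}
and analogously $(r^d W_\infty)_r = r^{d-1} w_\infty$. Integrating from $0$ to $r$ and invoking Lemma~\ref{r^dM} to discard the boundary term at the origin yields the desired representations.

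\textbf{Step 2: Regularity.} Proposition~\ref{convergence} gives $M_\infty, W_\infty \in C^2((0,\infty))$. Solving the two equations of \eqref{convergence:1} for $(M_\infty)_{rr}$ and $(W_\infty)_{rr}$, each second derivative is expressed as a smooth combination of $M_\infty$, $W_\infty$, and their first derivatives; a standard bootstrap then gives $M_\infty, W_\infty \in C^\infty((0,\infty))$. Hence $u_\infty, w_\infty \in C^2((0,\infty))$; and since $(v_\infty)_r = -r W_\infty$, each additional derivative of $v_\infty$ costs one derivative of $W_\infty$, so $v_\infty \in C^4((0,\infty))$.

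\textbf{Step 3: Verification of the equations.} The second equation is the easiest: a direct computation gives
\begin{align*}
\Delta v_\infty = (v_\infty)_{rr} + \tfrac{d-1}{r}(v_\infty)_r = -W_\infty - r(W_\infty)_r - (d-1)W_\infty = -w_\infty .
\end{align*}
For the third equation, expand $w_\infty = dW_\infty + r(W_\infty)_r$, compute $(w_\infty)_{rr} + \tfrac{d-1}{r}(w_\infty)_r$, then use the second line of \eqref{convergence:1} to eliminate $(W_\infty)_{rr}$ and, after one differentiation, $(W_\infty)_{rrr}$; collecting terms, the coefficient of $W_\infty'/r$ cancels identically and what remains is exactly $-dM_\infty - r(M_\infty)_r = -u_\infty$. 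For the first equation, radial symmetry gives the flux form
\begin{align*}
\Delta u_\infty - \nabla \cdot (u_\infty \nabla v_\infty) = r^{1-d}\bigl(r^{d-1}\bigl((u_\infty)_r - u_\infty (v_\infty)_r\bigr)\bigr)_r ,
\end{align*}
so it suffices to establish $(u_\infty)_r = u_\infty (v_\infty)_r = -r W_\infty u_\infty$. Differentiating $u_\infty = dM_\infty + r(M_\infty)_r$ gives $(u_\infty)_r = (d+1)(M_\infty)_r + r(M_\infty)_{rr}$; substituting $(M_\infty)_{rr} = -\tfrac{d+1}{r}(M_\infty)_r - W_\infty u_\infty$ from the first line of \eqref{convergence:1} produces precisely $(u_\infty)_r = -r W_\infty u_\infty$, as required.

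\textbf{Step 4: Nonnegativity.} Since $u_\infty = r^{1-d}(r^d M_\infty)_r$ and Proposition~\ref{convergence} guarantees $(r^d M_\infty)_r \ge 0$, it follows that $u_\infty \ge 0$ for $r > 0$; the same argument with $W_\infty$ yields $w_\infty \ge 0$.

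The main obstacle I anticipate is not any single calculation but rather ensuring that the bootstrap of Step 2 is carried out uniformly on $(0,\infty)$ without using any regularity at the origin; this is essential in case (ii) of Proposition~\ref{convergence}, where $M_\infty, W_\infty$ are only controlled away from $r=0$. All subsequent pointwise manipulations in Step 3 must therefore be understood at a fixed $r > 0$, consistent with the stated regularity classes.
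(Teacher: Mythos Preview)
Your proposal is correct and follows essentially the same approach as the paper: the paper derives the integral representations via $(r^d M_\infty)_r = r^{d-1}u_\infty$ together with Lemma~\ref{r^dM}, bootstraps the system \eqref{convergence:1} to get $C^\infty((0,\infty))$ regularity of $(M_\infty,W_\infty)$, remarks that a straightforward computation yields \eqref{S0}, and reads off nonnegativity from $(r^d M_\infty)_r\ge 0$. Your Step~3 simply spells out the ``straightforward computation'' that the paper leaves to the reader, and your closing remark about working away from $r=0$ is consistent with the paper's stated regularity classes.
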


\begin{proof}
The definition of $u_\infty$ implies that
\begin{align*}
r^{d-1}u_\infty = r^d(M_\infty)_r + dr^{d-1}M_\infty = (r^dM_\infty)_r.
\end{align*}
Integrating the equation above by $(\varepsilon, r)$ for any $\varepsilon > 0$, we have
\begin{align*}
\int_\varepsilon^r \rho^{d-1}u_\infty d\rho = r^dM_\infty(r) - \varepsilon^d M_\infty(\varepsilon).
\end{align*}
Since Lemma~\ref{r^dM} allows the passage to the limit $\varepsilon \to 0$, we arrive at
\begin{align*}
M_\infty(r) = r^{-d}\int_0^r \rho^{d-1}u_\infty(\rho) d\rho.
\end{align*}
An entirely analogous argument yields the corresponding identity for $W_\infty$. Since the regularity of $(M_\infty, W_\infty)$ established in Proposition~\ref{convergence} can be iteratively improved to $C^\infty((0,\infty))$ by exploiting the structure of the system \eqref{convergence:1}, this clearly guarantees
\begin{align*}
(u_\infty, v_\infty, w_\infty) \in C^2((0,\infty))\times C^4((0,\infty)) \times C^2((0,\infty))
\end{align*}
and moreover, a straightforward computation shows that the functions satisfy the system \eqref{S0} in $r > 0$. The nonnegativity of $u_\infty$ and $w_\infty$ is guaranteed by the property, which is proved in Proposition~\ref{convergence}, 
\begin{align*}
(r^dM_\infty)_r \ge 0,\quad (r^d W_\infty)_r \ge 0\quad \mathrm{for}\  r > 0.
\end{align*}
\end{proof}

\section{Proof of Theorem~\ref{th:3}}\label{chap:th_2}

In what follows, we give a rigorous proof of Theorem~\ref{th:3}.
Although the previous section established the stationary solution $(M_\infty, W_\infty)$ for the mass function formulation \eqref{mass:3} and then constructed the associated stationary solution $(u_\infty, v_\infty, w_\infty)$ of the original system \eqref{p}, the analysis required for Theorem~\ref{th:3} depends essentially on the full stationary structure of the original system \eqref{p}. Our approach highlights that the key ingredient lies in the asymptotic analysis of the original stationary solution $(u_\infty, v_\infty, w_\infty)$. This perspective appears to be novel and provides a deeper structural understanding of the system \eqref{p}.

\begin{proposition}\label{limit}
Let $r_0 \ge 0$ and $d \ge 5$. Assume that $(u,v,w)$ is a radially symmetric solution, with $u$ and $w$ nonnegative in $r > r_0$, to 
\begin{equation}\label{limit:0}
\begin{cases}
0 = \Delta u - \nabla \cdot (u\nabla v)\qquad &\mathrm{for}\ r > r_0,\\
-\Delta v = w\qquad &\mathrm{for}\ r > r_0,\\
0 = \Delta w + u \qquad &\mathrm{for}\ r > r_0.
\end{cases}
\end{equation}
If it holds that
\begin{align}\label{limit:1}
\liminf_{r \to \infty} (v(r) + 4\log r) \not= -\infty,
\end{align}
then the following properties hold
\begin{align}
\liminf_{r \to \infty}r^4 u(r) &\leq 8(d-4)(d-2),\label{limit:2}\\
\liminf_{r \to \infty}r^2w(r) &\leq 4(d-2).\label{limit:3}
\end{align}
\end{proposition}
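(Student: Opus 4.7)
I plan to argue by contradiction, exploiting the fact that the constants $8(d-4)(d-2)$ and $4(d-2)$ are precisely the asymptotic values realized by the singular stationary profile from Theorem~\ref{th:1}, in which $v_C = -4\log r + C$ sits exactly at the threshold \eqref{limit:1}. The central mechanism is to integrate the radial forms
\begin{align*}
-(r^{d-1} w_r)_r = r^{d-1} u, \qquad -(r^{d-1} v_r)_r = r^{d-1} w,
\end{align*}
so as to propagate pointwise lower bounds upward along the chain $u \to w \to v$.

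First I would establish \eqref{limit:3}. Suppose for contradiction that $\liminf_{r \to \infty} r^2 w(r) > 4(d-2)$, so that there exist $\varepsilon > 0$ and $R > r_0$ with $w(r) \ge (4(d-2) + \varepsilon)/r^2$ for all $r \ge R$. Integrating $-(r^{d-1} v_r)_r = r^{d-1} w$ over $[R, r]$ and using $d \ge 5$ to evaluate $\int_R^r \rho^{d-3}\, d\rho = (r^{d-2}-R^{d-2})/(d-2)$, I obtain
\begin{align*}
-r^{d-1} v_r(r) \ge \frac{4(d-2) + \varepsilon}{d-2}\, r^{d-2} + O(1).
\end{align*}
Dividing by $r^{d-1}$ and integrating once more then yields
\begin{align*}
v(r) + 4 \log r \le -\frac{\varepsilon}{d-2} \log r + O(1) \longrightarrow -\infty,
\end{align*}
in direct contradiction to \eqref{limit:1}.

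Next I would reduce \eqref{limit:2} to the inequality just proved. Assume to the contrary $\liminf_{r \to \infty} r^4 u(r) > 8(d-4)(d-2)$, and choose $\varepsilon > 0$ and $R$ so that $u(r) \ge (8(d-4)(d-2) + \varepsilon)/r^4$ for all $r \ge R$. Integrating $-(r^{d-1} w_r)_r = r^{d-1} u$ over $[R, r]$ gives
\begin{align*}
-r^{d-1} w_r(r) \ge \frac{8(d-4)(d-2) + \varepsilon}{d-4}\, r^{d-4} + O(1),
\end{align*}
which forces $w_r(r) < 0$ for all $r$ sufficiently large; combined with $w \ge 0$, the function $w$ is then eventually monotone decreasing and admits a finite limit $w_\infty \in [0, \infty)$. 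Integrating the resulting bound $-w_r(r) \ge (8(d-2) + \varepsilon/(d-4))/r^3 + O(r^{1-d})$ from $r$ to infinity produces
\begin{align*}
w(r) \ge w_\infty + \frac{4(d-2) + \varepsilon/(2(d-4))}{r^2} + O(r^{2-d}).
\end{align*}
If $w_\infty > 0$ then $r^2 w(r) \to \infty$, whereas if $w_\infty = 0$ then $r^2 w(r) \ge 4(d-2) + \varepsilon/(2(d-4))$ asymptotically. In either case $\liminf_{r \to \infty} r^2 w(r) > 4(d-2)$, contradicting \eqref{limit:3}.

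The main obstacle I anticipate is careful bookkeeping of the boundary constants generated at $R$ by each of the two integrations: one must verify that the polynomial growth rates $r^{d-4}$ and $r^{d-2}$ strictly dominate the contributions of $v_r(R)$ and $w_r(R)$, and that the subleading terms $O(r^{1-d})$ remain integrable at infinity so as not to erode the strict gain $\varepsilon$. Both facts rely essentially on the supercritical condition $d \ge 5$, which ensures that the singular stationary threshold is genuinely captured by the leading-order asymptotics.
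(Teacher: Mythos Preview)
Your proposal is correct and follows essentially the same approach as the paper's proof: both argue by contradiction, integrating the radial equations $-(r^{d-1}v_r)_r=r^{d-1}w$ and $-(r^{d-1}w_r)_r=r^{d-1}u$ to propagate a strict lower bound on $w$ (resp.\ $u$) into a contradiction with \eqref{limit:1} (resp.\ with the already-established \eqref{limit:3}). The only cosmetic difference is that the paper uses \eqref{limit:3} together with $w_r<0$ to conclude directly that $w(r)\to 0$, whereas you split into the cases $w_\infty>0$ and $w_\infty=0$; the substance is identical.
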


\begin{remark}
The assumption \eqref{limit:1} in Proposition~\ref{limit} is reasonable in view of the conclusion obtained in \cite{AGG2006, BFFG2012}. More precisely, in \cite[Theorem 2]{AGG2006} and \cite[Theorem 4]{BFFG2012} it is shown that when $d \ge 5$, then for any $\alpha >0$
there is a unique parameter $\beta_0 \in [-4d\alpha^\frac{1}{2}, 0)$ such that the corresponding solution $\phi_{\alpha, \beta_0} \in C^4((0,\infty))$ of \eqref{S2}
satisfies the asymptotic profile
\begin{align*}
\lim_{r \to \infty} (\phi_{\alpha, \beta_0} (r) + 4\log r) = \log 8(d-4)(d-2).
\end{align*}
On the other hand, it is also proved that if $\beta < \beta_0$, then the solution $\phi_{\alpha, \beta} \in C^4((0,\infty))$ of \eqref{S2} enjoys the upper estimate
\begin{align*}
\phi_{\alpha, \beta} (r) \leq \alpha - \dfrac{\beta_0-\beta}{2d}r^2\quad\mathrm{for\ all}\ r \in [0,\infty).
\end{align*}
Hence the validity of the asymptotic profile depends crucially on the choice of $\beta$. Consequently, our assumption expresses precisely the condition under which the expected asymptotic behavior holds.
\end{remark}

\begin{proof}
We first show \eqref{limit:3} by contradiction, namely we assume that there exist $r_1 > r_0$ and $K_0 > 4(d-2)$ such that
\begin{align}\label{limit:4}
w(r) \ge \dfrac{K_0}{r^2}\quad \mathrm{for}\ r \ge r_1.
\end{align}
Integrating the radial form of second equation of \eqref{limit:0} over $(r_1, r)$ yields that
\begin{align*}
r^{d-1}v_r(r) = r_1^{d-1} v_r(r_1) - \int_{r_1}^r \rho^{d-1}w(\rho)d\rho
\end{align*}
and using the lower bound \eqref{limit:4} we further estimate
\begin{align*}
\int_{r_1}^r \rho^{d-1}w(\rho) d\rho \ge K_0\int_{r_1}^r \rho^{d-3}.
\end{align*}
Since the right-hand side of the above inequality diverges to $\infty$ as $r \to \infty$, we can choose $r_2 \ge r_1$ so that
\begin{align}\label{limit:5}
r^{d-1}v_r(r) < 0\quad \mathrm{for\ all}\ r \ge r_2.
\end{align}
Performing a further integration of the second equation of \eqref{limit:0} over $(r_2, r)$ and making use of the properties \eqref{limit:4} and \eqref{limit:5}, we obtain for $r \ge r_2$
\begin{align*}
r^{d-1}v_r(r) &= r_2^{d-1}v_r(r_2) -\int_{r_2}^r \rho^{d-1}w(\rho)d\rho\\
& < -\int_{r_2}^r \rho^{d-1}w(\rho)d\rho\\
&\leq \dfrac{K_0}{d-2}r_2^{d-2} - \dfrac{K_0}{d-2}r^{d-2}.
\end{align*}
In turn, this inequality yields that for $r \ge r_2$
\begin{align*}
v(r) &\leq v(r_2) + \dfrac{K_0}{d-2}r_2^{d-2}\int_{r_2}^r \rho^{1-d} d\rho - \dfrac{K_0}{d-2}\int_{r_2}^r \dfrac{1}{\rho} d\rho\\
&= v(r_2) + \dfrac{K_0}{(d-2)^2}r_2^{d-2}(r_2^{2-d} - r^{2-d}) -\dfrac{K_0}{d-2}(\log r- \log r_2)\\
&\leq -\dfrac{K_0}{d-2}\log r + v(r_2) + \dfrac{K_0}{(d-2)^2} + \dfrac{K_0}{d-2}\log r_2,
\end{align*}
and hence we get the upper bound
\begin{align}\label{limit:6}
v(r) + \dfrac{K_0}{d-2}\log r \leq v(r_2) + \dfrac{K_0}{(d-2)^2} + \dfrac{K_0}{d-2}\log r_2.
\end{align}
Now we note
\begin{align*}
v(r) + 4\log r= v(r) + \dfrac{K_0}{d-2}\log r + \left(4-\dfrac{K_0}{d-2}\right)\log r.
\end{align*}
Noting that the first term on the right-hand side of the above equality remains bounded by \eqref{limit:6} , whereas the second one diverges to $-\infty$ as $r \to \infty$ because of $4-\frac{K_0}{d-2} < 0$, we obtain a contradiction with the assumed asymptotic behavior \eqref{limit:1}, which implies \eqref{limit:3}. Building upon this result, we now proceed to prove 
\eqref{limit:2} by contradiction, thereby we assume that there exist $r_3 > r_0$ and $K_1 > 8(d-4)(d-2)$ such that
\begin{align}\label{limit:7}
u(r) \ge \dfrac{K_1}{r^4}\quad\mathrm{for\ all}\ r \ge r_3.
\end{align}
Following the same line of reasoning as in the proof \eqref{limit:5} of the monotonicity for $v_r$, it readily follows from the combination of the third equation of \eqref{limit:0} and \eqref{limit:7} as well as $d \ge 5$ that we can find $r_4 \ge r_3$ such that
\begin{align}\label{limit:8}
r^{d-1}w_r(r) < 0\quad\mathrm{for}\ r \ge r_4.
\end{align}
We thus integrate the third equation of \eqref{limit:0} over $(r_4, r)$ and utilize \eqref{limit:7} to deduce
\begin{align*}
r^{d-1}w_r(r) &= r_4^{d-1}w_r(r_4) - \int_{r_4}^r \rho^{d-1}u(\rho) d\rho\\
&< -\int_{r_4}^r \rho^{d-1}u(\rho) d\rho\\
&\leq -K_1\int_{r_4}^r \rho^{d-5} d\rho\\
&= -\dfrac{K_1}{d-4}r^{d-4} + \dfrac{K_1}{d-4}r_4^{d-4}
\end{align*}
and hence
\begin{align*}
w_r(r) \leq -\dfrac{K_1}{d-4}r^{-3} + \dfrac{K_1}{d-4}r_4^{d-4}r^{1-d}\quad \mathrm{for}\ r > r_4.
\end{align*}
Here the inequality \eqref{limit:8} ensures that $w_r$ remains negative for $r \ge r_4$. Using this along with the earlier result \eqref{limit:3}, one readily verifies the property $\lim_{r \to \infty} w(r) = 0$.  Hence, once we fix 
$r \in (r_4, r_*)$ with $r_4 < r_*$, using the preceding inequality leads to
\begin{align*}
w(r) &= -\int_r^{r_*} w_r(\rho)d\rho + w(r_*)\\
&\ge \dfrac{K_1}{d-4}\int_r^{r_*} \rho^{-3}d\rho - \dfrac{K_1}{d-4}r_4^{d-4}\int_r^{r_*}\rho^{1-d}d\rho + w(r_*)\\
&= \dfrac{K_1}{2(d-4)}(r^{-2}- r_*^{-2}) + \dfrac{K_1}{(d-2)(d-4)}r_4^{d-4}(r_*^{2-d} - r^{2-d}) + w(r_*)\\
&\ge \dfrac{K_1}{2(d-4)}r^{-2}- \dfrac{K_1}{(d-2)(d-4)}r_4^{d-4}r^{2-d} - \dfrac{K_1}{2(d-4)}r_*^{-2} + w(r_*).
\end{align*}
Letting $r_* \to \infty$, since we already know  $\lim_{r \to \infty} w(r) = 0$, 
we infer that for every $r \in (r_4, \infty)$
\begin{align*}
w(r) \ge \dfrac{K_1}{2(d-4)}r^{-2} - \dfrac{K_1}{(d-2)(d-4)}r_4^{d-4}r^{2-d}.
\end{align*}
In particular, the choice of $K_1$ guarantees that $\frac{K_1}{2(d-2)} > 4(d-2)$, and therefore the lower bound above is incompatible with the asymptotic behavior \eqref{limit:3}. This contradiction completes the argument.
\end{proof}

\begin{proof}[Proof of Theorem~\ref{th:3}]
(i) Let $T > 0$ and $(u,v,w)$ be a radially symmetric solution of \eqref{p} in $\R^d  \times (0,T)$, where $u$ and $w$ are nonnegative.
Let $(M,W)$ denote the associated mass function defined by \eqref{mass:1} and \eqref{mass:2}.
Owing to these definitions, the identity holds that 
\begin{align*}
r M_r + dM = r^{1-d} (r^d M)_r = u \ge 0\quad\mathrm{for\ all}\ (r,t) \in (0,\infty) \times (0,T).
\end{align*}
Since $\lambda \in (0,1]$,  Proposition~\ref{stationaly sol} ensures that $(\lambda e^\phi, \lambda\phi, \lambda (-\Delta)\phi)$ solves the system \eqref{S0} classically in $\R^d$ with the appropriate conditions at the origin. Hence employing Proposition~\ref{sub and super} (iii) together with Proposition~\ref{CP_3} (i), we deduce that $(M,W)$ is nonincreasing in $t \in (0,T)$ for each $r \in [0,\infty)$.
Here we note that $(\lambda e^\phi, \lambda(-\Delta)\phi)$ is radially nonincreasing (see Proposition~\ref{stationaly sol}) and is regular at the origin, and also decays to zero at infinity.
In particular, the functions belong to $(C(\R^d)\cap L^\infty(\R^d))^2$.
Therefore, when $(\lambda e^\phi, \lambda(-\Delta)\phi)$ is taken as initial data,
Proposition~\ref{local_2} ensures the existence of a corresponding solution to
\eqref{p}.
Moreover since $(\lambda e^\phi, \lambda (-\Delta)\phi)$ satisfies \eqref{sub:1}, one can check that the mass functions generated by $(\lambda e^\phi, \lambda (-\Delta)\phi)$ inherit the supersolution property of the system \eqref{mass:3}. Therefore, since all the assumptions of Lemma~\ref{CP_2} are satisfied for solutions
generated by Proposition~\ref{local_2}, we may invoke Lemma~\ref{CP_2} to obtain,
for all $(r,t)\in[0,\infty)\times[0,T)$,
\begin{align*}
M(r,t) &\leq \lambda r^{-d}\int_0^r \rho^{d-1} e^\phi d\rho,\\
W(r,t) &\leq \lambda r^{-d} \int_0^r \rho^{d-1}(-\Delta)\phi d\rho.
\end{align*}
These inequalities and the definition of $(M,W)$ yield that for all $r >0$
\begin{align*}
\int_0^r \rho^{d-1}u(\rho,t) d\rho &\leq \lambda \int_0^r \rho^{d-1} e^\phi d\rho,\\
\int_0^r \rho^{d-1}w(\rho,t) d\rho &\leq \lambda  \int_0^r \rho^{d-1}(-\Delta)\phi d\rho.
\end{align*}
Consequently we conclude that the solution $(u,v,w)$ exists globally in time and enjoys the upper bound \eqref{th3:bound}.

\medskip
(ii) Let $T > 0$ and $(u,v,w)$ be a radially symmetric solution of \eqref{p} in $\R^d  \times (0,T)$, where $u$ and $w$ are nonnegative.
Seeing $(\lambda e^\phi, \lambda (-\Delta)\phi)$ as the initial data, and noting that $(\lambda e^\phi, \lambda (-\Delta)\phi)$ belongs to $(C(\R^d) \cap L^\infty(\R^d))^2$, Proposition~\ref{local_2} ensures that there exist $T_\lambda \in (0,\infty]$ and a classical solution $(u_\lambda, v_\lambda, w_\lambda)$ of \eqref{p} in $\R^d \times (0,T_\lambda)$. We denote by $(M_\lambda, W_\lambda)$ the mass function corresponding to the solution $(u_\lambda, v_\lambda, w_\lambda)$. Now since we have the ordering assumption on the initial data, this implies that
\begin{align*}
M_\lambda(r,0) \leq M (r,0),\quad W_\lambda (r,0) \leq W (r,0)\quad\mathrm{for\ all}\ r \in [0,\infty).
\end{align*}
Therefore, we may readily apply Lemma~\ref{CP_2}, which yields
\begin{align}\label{th3:ine1}
M_\lambda(r,t) \leq M(r,t),\quad W_\lambda(r,t) \leq W(r,t)\quad\mathrm{for}\ (r,t) \in [0,\infty) \times [0,\min\{T_\lambda, T\}).
\end{align}
Especially in light of $\lambda > 1$, Proposition~\ref{sub and super} (i) and Proposition~\ref{CP_3} (ii) with $(u_\lambda, w_\lambda)$ give that $(M_\lambda,W_\lambda)$ is nondecreasing in $t \in (0,T_\lambda)$ for each $r \in [0,\infty)$, so that
\begin{align}\label{th3:ine2}
M_{\lambda, 0}(r) := M_\lambda (r,0) \leq M_\lambda (r,t),\quad W_{\lambda,0} := W_\lambda (r,0) \leq W_\lambda (r,t)
\end{align}
for all $(r,t) \in [0, \infty) \times [0,T_\lambda)$.
Consequently, the argument will be complete once we demonstrate that $(M_\lambda, W_\lambda)$ blows up in finite time.
Assume for the contradiction that $T_\lambda = \infty$. 
Here we remark that $(u_\lambda, w_\lambda)$ is radially nonincreasing by employing Lemma~\ref{CP_4}, since $(\lambda e^\phi, \lambda (-\Delta)\phi)$ is radially nonincreasing due to Proposition~\ref{stationaly sol}.
Hence owing to the monotonicity in $t > 0$ for $(M_\lambda, W_\lambda)$, Proposition~\ref{convergence} enables us to get the limit function $(M_{\lambda, \infty}, W_{\lambda, \infty}) \in [C^2((0,\infty)) \cap L^\infty(r_0,\infty)]^2$ for any $r_0 > 0$ satisfying \eqref{convergence:1} and moreover,
\begin{align}\label{th3:ine8_1}
r (M_{\lambda, \infty})_r + dM_{\lambda, \infty} \ge 0\quad \mathrm{for}\ r \in (0,\infty).
\end{align}
Gathering the monotonicity in $t > 0$ for $(M_\lambda, W_\lambda)$ and \eqref{th3:ine2}, we see that for all $(r,t) \in (0,\infty) \times (0,\infty)$
\begin{align}\label{th3:ine3}
M_{\lambda,0} (r) \leq M_\lambda (r,t) \leq M_{\lambda, \infty}(r),\quad W_{\lambda, 0} (r)\leq W_\lambda (r,t) \leq W_{\lambda, \infty}(r).
\end{align}
As observed in the proof of Proposition~\ref{stationaly sol} (see \eqref{stationary sol:1}, \eqref{stationary sol:2}), the asymptotic behavior for $(e^\phi,  (-\Delta)\phi)$ allows us to apply L'Hospital's rule and obtain 
\begin{align}
\lim_{r \to \infty} r^4 M_{\lambda, 0}(r) &= \lim_{r \to \infty} \dfrac{\lambda\int_0^r \rho^{d-1}e^{\phi(\rho)} d\rho }{r^{d-4}}\notag\\
&= \lim_{r \to \infty} \dfrac{\lambda}{d-4}r^4e^{\phi(r)}\notag\\
&= 8(d-2) \lambda > 8(d-2).\label{th3:ine4}
\end{align}
A similar argument applies to $W_{\lambda,0}$, leading to
\begin{align}\label{th3:ine5}
\lim_{r \to \infty} r^2 W_{\lambda, 0}(r) = 4\lambda > 4.
\end{align}
By combining \eqref{th3:ine4} and \eqref{th3:ine5} with \eqref{th3:ine3}, we deduce that
\begin{align}\label{th3:ine6}
\liminf_{r \to \infty} r^4 M_{\lambda, \infty}(r) \ge \lim_{ r\to \infty} r^4 M_{\lambda, 0}(r) > 8(d-2)
\end{align}
as well as
\begin{align}\label{th3:ine7}
\liminf_{r\to\infty}r^2 W_{\lambda, \infty}(r) \ge \lim_{r \to \infty} r^2 W_{\lambda, 0}(r) > 4.
\end{align}
The asymptotic behavior of $(M_{\lambda, \infty}, W_{\lambda, \infty})$ has been derived.
Our next goal is to examine the corresponding asymptotics of the stationary solution $(u_{\lambda,\infty}, v_{\lambda,\infty}, w_{\lambda,\infty})$ of \eqref{p}.
This can be accomplished by applying Proposition~\ref{u,v,w}, which links the mass functions to the original variables.
At this stage, exploiting \eqref{th3:ine8_1} and \eqref{th3:ine7} together with the definition of $(u_{\lambda,\infty}, v_{\lambda,\infty}, w_{\lambda,\infty})$ given in Proposition~\ref{u,v,w}
, we arrive at the estimate, valid for sufficiently large $r$
\begin{align*}
(r^4 u_{\lambda, \infty})_r &= r^3 (r(M_{\lambda, \infty})_r + dM_{\lambda, \infty}) (4 - r^2W_{\lambda, \infty}) \leq 0.
\end{align*}
In fact, by setting
\begin{align*}
\Phi (r) = r(M_{\lambda, \infty})_r + dM_{\lambda, \infty},
\end{align*}
the system \eqref{convergence:1}, which is satisfied by $(M_{\lambda, \infty}, W_{\lambda, \infty})$, yields
\begin{align}\label{th3:ine8_2}
\Phi_r (r) + rW_{\lambda, \infty}(r) \Phi (r) = 0\quad \mathrm{for}\ r >0.
\end{align}
Therefore in light of Proposition~\ref{u,v,w}, we obtain for $r > 0$
\begin{align*}
(r^4 u_{\lambda, \infty})_r &= (r^4 \Phi)_r = r^3(r\Phi_r + 4\Phi) = r^3 \Phi (4-r^2W_{\lambda, \infty}).
\end{align*}
As a consequence, since it holds that $(r^4 u_{\lambda, \infty})_r \leq 0$ for sufficiently large $r$, the quantity $r^4 u_{\lambda, \infty}$ is nonincreasing for sufficiently large $r$. Thus the limit $ \lim_{r \to \infty}r^4 u_{\lambda, \infty}(r)$ exists. Here the relationship \eqref{th3:ine3} as well as Proposition~\ref{u,v,w} implies that for $r > 0$
\begin{align*}
\int_0^r \rho^{d-1}u_{\lambda, \infty} (\rho) d\rho &= r^d M_{\lambda, \infty} \ge r^d M_{\lambda, 0} = \lambda \int_0^r \rho^{d-1} e^\phi d \rho.
\end{align*}
Since $e^\phi$ exhibits the asymptotic behavior $e^\phi = 8(d-4)(d-2)r^{-4} + o(1)$ as $r\to \infty$, the right-hand side diverges to $\infty$ as $r \to \infty$. Therefore we may apply L'Hospital's rule to obtain
\begin{align*}
\liminf_{r \to \infty}r^4 M_{\lambda, \infty}(r) &= \lim_{r \to \infty}\dfrac{\int_0^r \rho^{d-1}u_{\lambda, \infty}(\rho)d\rho}{r^{d-4}}\\
&= \lim_{r \to \infty} \dfrac{1}{d-4}r^4u_{\lambda, \infty}(r),
\end{align*}
where the last equality follows by the existence of the limit $ \lim_{r \to \infty}r^4 u_{\lambda, \infty}(r)$.
Furthermore, combining this identity with \eqref{th3:ine6}, we deduce that
\begin{align}\label{th3:ine8}
\lim_{r \to \infty} r^4 u_{\lambda, \infty} (r) > 8(d-4)(d-2).
\end{align}
The definition of $(u_{\lambda, \infty}, v_{\lambda, \infty}, w_{\lambda, \infty})$ established by Proposition~\ref{u,v,w}  and the equality \eqref{th3:ine8_2} yield that
\begin{align*}
(u_{\lambda, \infty})_r - u_{\lambda, \infty}(v_{\lambda, \infty})_r = (r(M_{\lambda, \infty})_r + dM_{\lambda, \infty})_r - (r(M_{\lambda, \infty})_r + dM_{\lambda, \infty})(-rW_{\lambda, \infty}) = 0.
\end{align*}
Here $u_{\lambda, \infty}$ cannot be identically zero for $r > 0$ because of the asymptotic profile \eqref{th3:ine8}. This fact provides that $u_{\lambda, \infty}$ is strictly positive for all $r > 0$ in the same way as in \cite[Lemma 3.1]{W2010}. Consequently, the identity, together with the positivity of $u_{\lambda, \infty}$, implies that 
$ \log u_{\lambda, \infty} = v_{\lambda, \infty} + C$ for all $r > 0$ and some constant $C$. Condition \eqref{th3:ine8} corresponds to the case in which the asymptotic behavior \eqref{limit:2} in Proposition~\ref{limit} fails to hold.
Therefore, by considering the contrapositive of Proposition~\ref{limit}, we conclude that
\begin{align}\label{th3:ine8_4}
\liminf_{r \to \infty} (v_{\lambda, \infty}(r) + 4\log r) = -\infty.
\end{align}
Nevertheless, by recalling that for $r > 0$
\begin{align*}
r^4 u_{\lambda, \infty}(r) = Cr^4 e^{v_{\lambda, \infty}(r)} = C e^{v_{\lambda, \infty}(r) + 4\log r}, 
\end{align*}
the asymptotic behavior \eqref{th3:ine8_4} for $v_{\lambda, \infty}$ enforces $\lim_{r \to \infty} r^4 u_{\lambda, \infty}(r) = 0$, which is incompatible with \eqref{th3:ine8}. Hence $T_\lambda $ must be finite. This necessarily forces the solution $(u,v,w)$ blows up at some time earlier than $T_\lambda$.

\medskip
(iii) Since $\lambda_1 > \lambda_2 > 1$, the proof of (ii) extends directly, by Proposition~\ref{sub and super}.
\end{proof}

\section{Proof of Theorem~\ref{th:3.5} and Theorem~\ref{th:4}}\label{chap:th_3}

In this section, motivated by the results obtained in \cite{AGG2006, BFFG2012}, we demonstrate that the qualitative behavior of solutions to the system \eqref{p} changes depending on the spatial dimension. Our approach is to focus first on the radial solution of the fourth-order elliptic problem \eqref{S2} and to show that the Morrey norms for $(e^\phi, (-\Delta)\phi)$ differ significantly with the dimension.

\begin{lemma}\label{d13}
Let $d \ge 13$ and $\phi \in C^4(\R^d)$ be a radially symmetric solution of \eqref{S2} with initial conditions
\begin{align*}
\phi(0) = \log \alpha,\quad\Delta \phi(0) = \beta_0(\alpha),\quad \phi^\prime(0) = (\Delta \phi)^\prime (0) = 0,
\end{align*}
where  $\alpha > 0$ and $\beta_0(\alpha) \in [-4d\alpha^\frac{1}{2}, 0)$. Then it holds that
\begin{align}\label{d13:1}
\sup_{R >0} R^{4-d}\int_0^R r^{d-1}e^{\phi (r)} dr = 8(d-2)
\end{align}
as well as
\begin{align}\label{d13:2}
\sup_{R > 0}R^{2-d}\int_0^R r^{d-1}(-\Delta)\phi(r) dr = 4.
\end{align}
\end{lemma}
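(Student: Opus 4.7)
The plan is to reduce both identities to a pointwise comparison between the regular radial profile $\phi$ and the singular stationary profile $\phi_s(r) := -4\log r + \log 8(d-4)(d-2)$, which is itself a singular solution of $\Delta^2 \phi_s = e^{\phi_s}$ on $(0,\infty)$ and whose Laplacian is $-\Delta\phi_s(r) = 4(d-2)/r^2$. The two quantities appearing in \eqref{d13:1} and \eqref{d13:2} are, up to factors, the mass functions associated with $e^\phi$ and $-\Delta\phi$, so matching the singular profile pointwise will yield the values $8(d-2)$ and $4$ as upper bounds.

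First I would verify that these values are already attained in the limit $R\to\infty$. From the refined asymptotic analysis of \cite{G2014} recorded in the proof of Proposition~\ref{stationaly sol}, the regular solution satisfies $r^4 e^{\phi(r)}\to 8(d-4)(d-2)$ and $r^2(-\Delta\phi)(r)\to 4(d-2)$ as $r\to\infty$. A single application of L'Hospital's rule then gives
\begin{align*}
\lim_{R\to\infty}R^{4-d}\int_0^R r^{d-1}e^{\phi(r)}\,dr = \frac{1}{d-4}\lim_{R\to\infty}R^4 e^{\phi(R)} = 8(d-2),
\end{align*}
and an analogous computation shows $R^{2-d}\int_0^R r^{d-1}(-\Delta)\phi(r)\,dr \to 4$. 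It therefore remains to show that these limits are in fact global suprema.

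The main step, where the hypothesis $d\ge 13$ enters decisively, is the pointwise comparison $\phi(r)\le\phi_s(r)$ on $(0,\infty)$, equivalently $e^{\phi(r)}\le 8(d-4)(d-2)/r^4$. This is the biharmonic counterpart of the Joseph--Lundgren threshold phenomenon: for $d\ge 13$ the regular entire radial solution remains below the singular profile for all $r>0$, whereas for $5\le d\le 12$ the two functions intersect infinitely often. I would invoke this comparison from \cite{AGG2006,BFFG2012,G2014}, where it is obtained via a phase-plane analysis of the Emden-transformed ODE for $\psi(t) := \phi(e^t)+4t$. The companion bound $-\Delta\phi(r)\le 4(d-2)/r^2$ I would then deduce from $e^\phi\le e^{\phi_s}$ by integrating the radial Poisson equation $(r^{d-1}w_r)_r = -r^{d-1}e^\phi$ satisfied by $w := -\Delta\phi$. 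Regularity at the origin forces $\lim_{r\to 0}r^{d-1}w_r(r) = 0$, so
\begin{align*}
w_r(r) = -r^{1-d}\int_0^r s^{d-1}e^{\phi(s)}\,ds \ge -r^{1-d}\int_0^r 8(d-4)(d-2)\,s^{d-5}\,ds = -\frac{8(d-2)}{r^3},
\end{align*}
and combined with $w(\infty)=0$ (from Proposition~\ref{stationaly sol}), integrating this inequality over $(r,\infty)$ yields $w(r)\le 4(d-2)/r^2$.

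With both pointwise bounds in hand, the Morrey estimates are immediate by direct substitution: $R^{4-d}\int_0^R r^{d-1}e^\phi\,dr\le 8(d-2)$ and $R^{2-d}\int_0^R r^{d-1}(-\Delta)\phi\,dr\le 4$ hold for every $R>0$, which combined with the limits computed earlier yield the claimed suprema. The principal obstacle is clearly the dimension-sensitive pointwise comparison $\phi\le\phi_s$; once that is granted, every other step is an elementary computation.
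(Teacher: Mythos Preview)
Your proposal is correct and follows essentially the same approach as the paper: both invoke the pointwise comparison $\phi<\phi_s$ for $d\ge 13$ from \cite{BFFG2012} together with the asymptotics from \cite{AGG2006,G2014} and L'Hospital's rule. The only minor difference is that the paper cites the inequality $-\Delta\phi(r)<4(d-2)/r^2$ directly from \cite[Theorem 1.1]{G2014}, whereas you derive it from the first comparison by integrating the radial Poisson equation; your derivation is valid and makes the second bound a consequence of the first rather than an independent citation.
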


\begin{proof}
According to  \cite[Lemma 12]{BFFG2012}, when $d \ge 13$, the regular radial solution $\phi$ of \eqref{S2} with the prescribed conditions at the origin and a singular radial solution $-4\log r + \log 8(d-4)(d-2)$ of \eqref{S2} do not intersect each other for $r > 0$, namely we have
\begin{align}\label{d13:4}
\phi(r) < \log \dfrac{8(d-4)(d-2)}{r^4}\quad \mathrm{for}\ r \in (0,\infty).
\end{align}
This behavior implies that for $R > 0$
\begin{align}
R^{4-d}\int_0^R r^{d-1} e^{\phi(r)} dr &< R^{4-d} \int_0^R r^{d-1} \dfrac{8(d-4)(d-2)}{r^4} dr= 8(d-2).\label{d13:3}
\end{align}
On the other hand, the asymptotic behavior of $\phi$ in light of \cite[Theorem 2]{AGG2006} and \cite[Theorem 4]{BFFG2012}:
\begin{align*}
\phi(r) = -4\log r + \log 8(d-4)(d-2) + o(1)\quad \mathrm{as}\  r\to \infty
\end{align*}
immediately implies that
$ \int_0^R r^{d-1}e^{\phi(r)} dr \to \infty$ as $R \to \infty$, allowing the use of L'Hospital's rule.
Consequently we get
\begin{align*}
\lim_{R \to \infty}R^{4-d}\int_0^R r^{d-1}e^{\phi (r)} dr &= \lim_{R \to \infty} \dfrac{R^{d-1}e^{\phi(R)}}{(d-4) R^{d-5}} = 8(d-2),
\end{align*}
which lead to our desired conclusion \eqref{d13:1} together with \eqref{d13:3}. We next confirm \eqref{d13:2}. Since the solution $\phi$ solves the fourth-order equation $\Delta^2 \phi = e^\phi$ on $ [0,\infty)$, the result in \cite[Theorem 1.1]{G2014} leads to the following estimate:
\begin{align*}
-\Delta \phi (r) <  \dfrac{4(d-2)}{r^2}\quad \mathrm{for\ all}\  r \in (0,\infty).
\end{align*}
Hence we obtain for any $R > 0$
\begin{align*}
R^{2-d}\int_0^R r^{d-1}(-\Delta)\phi (r) dr < 4(d-2) R^{2-d}\int_0^Rr^{d-3} dr = 4.
\end{align*}
Moreover as noted in the proof (see \eqref{stationary sol:2}) in Proposition~\ref{stationaly sol}, the detailed asymptotic expansion of $\phi$ is available thanks to the results in \cite{G2014}. This allows us to apply L'Hospital's rule to deduce
\begin{align*}
\lim_{R\to \infty} R^{2-d}\int_0^R r^{d-1}(-\Delta )\phi(r) dr = \lim_{R\to \infty}\dfrac{R^{d-1}(-\Delta)\phi(R)}{(d-2)R^{d-3}} = 4,
\end{align*}
which clearly proves \eqref{d13:2}.
\end{proof}

\begin{lemma}\label{d5~12}
Let $ 5 \leq d \leq 12$ and let $\phi \in C^4(\R^d)$ be a radially symmetric solution of \eqref{S2} with the conditions at the origin
\begin{align*}
\phi(0) = \log \alpha,\quad\Delta \phi(0) = \beta_0(\alpha),\quad \phi^\prime(0) = (\Delta \phi)^\prime (0) = 0,
\end{align*}
where  $\alpha > 0$ and $\beta_0(\alpha) \in [-4d\alpha^\frac{1}{2}, 0)$. Then there exist $K_0 > 8(d-2)$ and $K_1 > 4$ such that
\begin{align}\label{d5~12:claim1}
\sup_{R >0} R^{4-d}\int_0^R r^{d-1}e^{\phi (r)} dr = K_0
\end{align}
and 
\begin{align}\label{d5~12:claim2}
\sup_{R > 0}R^{2-d}\int_0^R r^{d-1}(-\Delta)\phi(r) dr = K_1.
\end{align}
\end{lemma}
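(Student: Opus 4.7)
The plan is to rewrite the two scaling-critical integrals as boundary terms via the fourth-order equation \eqref{S2}, and then to conclude from the oscillatory asymptotic behavior of the regular radial solution $\phi$ which, as shown in \cite{AGG2006, BFFG2012, G2014}, is precisely the defining feature of the dimensional range $5\le d\le 12$ for the biharmonic Gel'fand problem.

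First I would use the radial identities $r^{d-1}e^\phi = (r^{d-1}(\Delta\phi)')'$ and $r^{d-1}(-\Delta\phi) = -(r^{d-1}\phi')'$ together with the initial conditions $\phi'(0) = (\Delta\phi)'(0) = 0$ to obtain, for every $R>0$,
\begin{align*}
F(R) &:= R^{4-d}\int_0^R r^{d-1}e^{\phi(r)}\,dr = R^3 (\Delta\phi)'(R),\\
W_\phi(R) &:= R^{2-d}\int_0^R r^{d-1}(-\Delta\phi)(r)\,dr = -R\,\phi'(R).
\end{align*}
Setting $\eta(r) := \phi(r)+4\log r-\log 8(d-4)(d-2)$ and $\xi(r) := \Delta\phi(r) + 4(d-2)/r^2$, both of which tend to $0$ as $r\to\infty$ by \cite[Theorem 2]{AGG2006} and \cite{G2014}, these identities become $F(R) = 8(d-2) + R^3\xi'(R)$ and $W_\phi(R) = 4 - R\eta'(R)$.

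The key step is then to invoke the oscillatory asymptotic expansion valid precisely when $5\le d\le 12$: the indicial equation of the linearization of $\Delta^2 u = e^u$ about $\bar\phi(r) = -4\log r + \log 8(d-4)(d-2)$ has two genuinely complex characteristic exponents in this dimensional range, so \cite{BFFG2012, G2014} yield leading-order expansions of oscillatory type $\eta(r) = C r^{-\sigma}\cos(\omega\log r + \theta) + o(r^{-\sigma})$ (and an analogous one for $\xi$) with $\omega>0$ and $C\ne 0$. Differentiation preserves this oscillatory character up to a phase shift, so both $\eta'$ and $\xi'$ take strictly positive and strictly negative values on every tail $[A,\infty)$. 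Picking $R_1, R_2 > 0$ with $\xi'(R_1) > 0$ and $\eta'(R_2) < 0$, the identities above yield $F(R_1) > 8(d-2)$ and $W_\phi(R_2) > 4$. Combined with the continuity of $F$ and $W_\phi$, the trivial limits $F(0^+) = W_\phi(0^+) = 0$ (from smoothness of $\phi$ at the origin, giving $O(R^4)$ and $O(R^2)$ behavior respectively), and the limits $F(R)\to 8(d-2)$, $W_\phi(R)\to 4$ as $R\to\infty$, the suprema are attained at interior points and define constants $K_0 > 8(d-2)$, $K_1 > 4$ satisfying \eqref{d5~12:claim1} and \eqref{d5~12:claim2}.

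The main obstacle will be extracting from \cite{BFFG2012, G2014} the precise statement that the leading-order correction of the particular regular solution $\phi$ is of genuinely oscillatory type in the range $5 \le d \le 12$, and not killed by a vanishing projection coefficient $C$ onto the oscillatory mode. Once the expansion $\eta(r) = Cr^{-\sigma}\cos(\omega\log r + \theta) + o(r^{-\sigma})$ is secured, the sign-change properties of $\eta'$ and $\xi'$ needed for the final inequalities follow directly by differentiation, and the rest of the argument reduces to the integration-by-parts identities derived in the first step together with the standard compactness argument in $R$.
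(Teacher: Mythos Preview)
Your approach is correct and genuinely different from the paper's. The key difference is your use of the integration-by-parts identities $F(R)=R^3(\Delta\phi)'(R)$ and $W_\phi(R)=-R\phi'(R)$, which reduce the integrals to boundary evaluations. The paper instead computes the integrals directly: it expands $e^{\phi(r)}=8(d-4)(d-2)r^{-4}(1+\varepsilon(r)+O(r^{2\tau}))$ via Taylor's formula, splits $\int_0^R$ at a large $R_0$, and tracks each contribution ($I$, $II$, $III$) until the oscillatory term $CR^{\tau}\sin(k_0\log R+\theta)$ emerges as the leading correction to $8(d-2)$; the analogous computation is then repeated for $(-\Delta)\phi$. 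Your route bypasses all of this bookkeeping.

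What each approach buys: the paper's method stays at the level of $\phi$ and $\Delta\phi$ themselves and only needs the asymptotic expansion from \cite[Theorem 3.1]{G2014} exactly as stated; your method is much shorter but requires the oscillatory asymptotics one derivative further, namely for $\phi'$ and $(\Delta\phi)'$. Both proofs ultimately hinge on the same external input, the nonvanishing of the oscillatory coefficient $C_0\ne0$ for the regular solution when $5\le d\le12$, which the paper takes from \cite{G2014}. Your identified obstacle is therefore exactly the right one, and it is no worse than what the paper itself assumes; the differentiability of the ODE asymptotic expansion (which you implicitly use) is standard and is also invoked in the paper's treatment of $(-\Delta)\phi$. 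One minor point: you do not actually need attainment of the suprema, only their finiteness and the strict inequalities $K_0>8(d-2)$, $K_1>4$; both follow immediately from continuity, the limits at $0$ and $\infty$, and the single-point excess you produce.
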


\begin{proof}
As shown in \cite[Theorem 1.1]{G2014}, it turns out that when $5 \leq d \leq 12$, the regular radial solution $\phi$ of \eqref{S2} subject to the conditions at the origin oscillates around the singular profile
$-4\log r + \log 8(d-4)(d-2)$, and in particular, the two solutions intersect infinitely many times. Moreover thanks to \cite[Theorem 3.1]{G2014}
the regular radial solution $\phi$ satisfies the following oscillatory asymptotic expansion as $r \to \infty$
\begin{align}\label{d5~12:1}
\phi(r) = -4\log r + \log 8(d-4)(d-2) + C_0 r^\tau \sin(k_0 \log r + k_1) + C_1r^\nu + O(r^{2\tau}),
\end{align}
where $\tau = -\frac{d-4}{2}$ and $\nu < 2-d$, and for some constants $k_0, k_1, C_0,C_1$ with $C_0 \not= 0$.
Our goal is to show that $R^{4-d}\int_0^R r^{d-1}e^{\phi(r)} dr - 8(d-2)$ changes sign infinitely many times as $R \to \infty$. 
Using the asymptotic profile \eqref{d5~12:1}, we write
\begin{align*}
e^{\phi(r)} = \dfrac{8(d-4)(d-2)}{r^4}e^{\varepsilon(r)}\quad \mathrm{as}\ r \to \infty,
\end{align*}
where $\varepsilon (r) =  C_0 r^\tau \sin(k_0 \log r + k_1) + C_1r^\nu + O(r^{2\tau})$. Since $\varepsilon \to 0$ as $r\to \infty$, we may expand $e^{\varepsilon(r)}$ by Taylor's formula to obtain
\begin{align*}
e^{\varepsilon (r)} = 1 + \varepsilon(r) + O(\varepsilon (r)^2)\quad\mathrm{as}\ r \to \infty.
\end{align*}
Noticing that $\varepsilon (r) = O(r^\tau)$ due to \eqref{d5~12:1}, we deduce
\begin{align}\label{d5~12:2}
e^{\phi(r)} = \dfrac{8(d-4)(d-2)}{r^4}(1 + \varepsilon (r) + O(r^{2\tau}))\quad \mathrm{as}\ r \to \infty.
\end{align}
Let $R_0 > 0$ be sufficiently large so that \eqref{d5~12:1} holds for all $r \ge R_0$. Then we have
\begin{align}\label{d5~12:3}
R^{4-d}\int_0^R r^{d-1}e^{\phi(r)} dr = R^{4-d}\int_0^{R_0} r^{d-1}e^{\phi(r)} dr + R^{4-d}\int_{R_0}^R r^{d-1}e^{\phi(r)} dr.
\end{align}
Since $\phi$ is regular near the origin, the first term contributes only a bounded quantity.
More precisely, there exists a constant $K_0 > 0$ such that
\begin{align}\label{d5~12:9}
R^{4-d}\int_0^{R_0} r^{d-1}e^{\phi(r)} dr \leq K_0 R^{4-d}.
\end{align}
Therefore, our task is reduced to understanding the asymptotics of the second term in \eqref{d5~12:3}. Combing \eqref{d5~12:2} with the second term on \eqref{d5~12:3} yields that for $R \ge R_0$
\begin{align*}
R^{4-d}\int_{R_0}^R r^{d-1}e^{\phi(r)} dr &= 8(d-4)(d-2)R^{4-d}\int_{R_0}^R r^{d-5} dr\notag\\
&\hspace{1cm} + 8(d-4)(d-2)R^{4-d}\int_{R_0}^R r^{d-5} \varepsilon(r)dr\notag\\
&\hspace{1cm} + 8(d-4)(d-2)R^{4-d}\int_{R_0}^R r^{d-5} O(r^{2\tau}) dr\notag\\
&=: I + II + III.
\end{align*}
The term $I$ can be computed explicitly.
Indeed, we have
\begin{align}\label{d5~12:4}
I = 8(d-2) - 8(d-2)R_0^{d-4}R^{4-d},
\end{align}
which implies that $I = 8(d-2) + O(R^{4-d})$ as $R \to \infty$. With regard to the term $III$, since $d-5 + 2\tau = -1$, we can see that there exists $C > 0$ such that
\begin{align}\label{d5~12:8}
III \leq CR^{4-d}\log R + CR^{4-d}\log R_0.
\end{align}
This clearly implies that $III = O(R^\tau)$ as $R \to \infty$.
As for the term $II$, we expand it as
\begin{align*}
II&= 8(d-4)(d-2)C_0R^{4-d}\int_{R_0}^R r^{d-5 + \tau} \sin (k_0\log r+ k_1) dr\\
&\hspace{1cm} + 8(d-4)(d-2)C_1 R^{4-d}\int_{R_0}^R r^{d-5 + \nu} dr\\
&\hspace{1cm} + 8(d-4)(d-2)R^{4-d} \int_{R_0}^R r^{d-5} O(r^{2\tau}) dr.
\end{align*}
Among these contributions, only the first term requires careful attention.
Indeed, the remaining two parts are of order $O(R^{4-d}) + O(R^{\nu}) + O(R^{\tau})$, so we may safely disregard them when analyzing the asymptotic behavior. Hereafter, every constant appearing in the computations will be denoted by the same symbol $C$. We proceed to observe that
\begin{align}
&8(d-4)(d-2)C_0R^{4-d}\int_{R_0}^R r^{d-5 + \tau} \sin (k_0\log r+ k_1) dr\notag\\
 &\hspace{1cm}= CR^{4-d}\int_{R_0}^R r^{d-5 + \tau} \sin (k_0\log r+ k_1) dr\notag\\
 &\hspace{1cm} = CR^{4-d}\int_{\log R_0}^{\log R} e^{(d-5 + \tau)s} \sin (k_0s + k_1) e^s ds\notag\\
 &\hspace{1cm} = CR^{4-d}\int_{\log R_0}^{\log R} e^{(d-4 + \tau)s} \sin (k_0s + k_1)ds.\label{d5~12:5}
\end{align}
By a standard computation, one can check that
\begin{align*}
\int_{\log R_0}^{\log R} e^{(d-4 + \tau)s} \sin (k_0s + k_1)ds &= CR^{d-4 + \tau} \sin (k_0 \log R + k_1)\\
&\hspace{1cm} + CR^{d-4 + \tau} \cos (k_0 \log R + k_1) + C.
\end{align*}
Hence using the trigonometric addition formulas, we obtain
\begin{align*}
\int_{\log R_0}^{\log R} e^{(d-4 + \tau)s} \sin (k_0s + k_1)ds &= CR^{d-4 + \tau} \sin (k_0 \log R + \theta) + C
\end{align*}
for some phase shift $\theta$. Combining \eqref{d5~12:5} with the identity above yields
\begin{align}
CR^{4-d}\int_{R_0}^{R} r^{d-5+\tau}\sin(k_0\log r + k_1) dr
= CR^{\tau}\sin(k_0\log R + \theta) + CR^{4-d}.\label{d5~12:6}
\end{align}
Substituting \eqref{d5~12:9}, \eqref{d5~12:4}, \eqref{d5~12:8}, and \eqref{d5~12:6} into \eqref{d5~12:3}, we arrive at
\begin{align*}
R^{4-d}\int_0^R r^{d-1}e^{\phi(r)} dr - 8(d-2) = CR^{\tau}\sin(k_0\log R + \theta) + O(R^\tau)\quad\mathrm{as}\ R \to \infty
\end{align*}
for some $\theta$. Since $\tau < 0$, the above asymptotic expansion immediately implies that
\begin{align*}
\lim_{R \to \infty} R^{4-d}\int_0^R r^{d-1}e^{\phi(r)} dr = 8(d-2).
\end{align*}
Nevertheless, the oscillatory term $\sin (k_0\log R + \theta)$ changes sign infinitely many times as $R\to \infty$. Thus we obtain the desired conclusion \eqref{d5~12:claim1}. Similarly, differentiating the asymptotic expansion of 
$\phi$ shows that
\begin{align*}
-\Delta \phi (r) = 4(d-2)r^{-2} + \tilde{C_0}r^{\tau-2}\sin (k_0 \log r + \tilde{\theta}) + O(r^{\nu-2}) + O(r^{2\tau -2})\quad \mathrm{as}\ r \to \infty
\end{align*}
for some constant $\tilde{C_0} \not=0$ and some phase shift $\tilde{\theta}$. Arguing in exactly the same way as above, but now with $-\Delta\phi$ in place of $e^{\phi}$, we conclude that $R^{2-d}\int_0^R r^{d-1}(-\Delta)\phi (r) dr$ oscillates infinitely many times around its limit $4$. This completes the proof.
\end{proof}

Finally by applying the results of Theorem~\ref{th:3}, we show that the long-time behavior of the solution to \eqref{p} changes dramatically depending on the dimension.

\begin{proof}[Proof of Theorem~\ref{th:3.5}]
(i)  Since we have the assumption that the initial data $(u_0,w_0)$ satisfies
\begin{align*}
0 \leq u_0 \leq e^{\phi},\quad 0 \leq w_0 \leq (-\Delta)\phi\quad \mathrm{for\ all}\  x \in \R^d,
\end{align*}
according to Theorem~\ref{th:3} (i), the solution $(u,v,w)$ of \eqref{p} exists globally in time and enjoys the following bounds 
\begin{align*}
\int_{|x| < R} u(x,t) dx \leq  \int_{|x| < R} e^{\phi(x)} dx,\quad \int_{|x| < R} w(x,t) dx \leq  \int_{|x| < R} (-\Delta)\phi(x) dx
\end{align*}
for all $(R,t) \in (0,\infty) \times [0,\infty)$. From Lemma~\ref{d13}, valid for 
$d \ge 13$, we have
\begin{align*}
\sup_{R > 0}R^{4-d}\int_{|x| < R} e^{\phi(x)} dx = 8(d-2)\sigma_d,\quad \sup_{R > 0}R^{4-d}\int_{|x| < R} (-\Delta)\phi(x) dx = 4\sigma_d.
\end{align*}
Thus the upper bounds claimed in \eqref{th3.5:bound} follow.

\medskip
(ii) If $5 \leq d \leq 12$, then, in light of Lemma~\ref{d5~12}, we can choose a parameter $\lambda \in (0,1)$ so that $\lambda K_0 \leq 8(d-2)$ and $\lambda K_1 \leq 4$, where the constants $K_0$ and $K_1$ are those provided in Lemma~\ref{d5~12}. Therefore whenever the initial data $(u_0,w_0)$ satisfy 
\begin{align*}
0 \leq u_0 \leq \lambda e^{\phi},\quad 0 \leq w_0 \leq \lambda (-\Delta)\phi\quad\mathrm{for\ all} \ x \in \R^d,
\end{align*}
from Theorem~\ref{th:3}, we conclude that the solution exists globally in time and satisfy the estimates \eqref{th3.5:bound}.
\end{proof}

\begin{proof}[Proof of Theorem~\ref{th:4}]
(i) Let $\lambda > 1$. Assume that the initial data $(u_0,w_0)$
are given by $(u_0,w_0) = (\lambda e^\phi, \lambda(-\Delta)\phi)$, where $\phi \in C^4(\R^d)$ is a radially symmetric solution of \eqref{S2} with the initial conditions:
\begin{align*}
\phi(0) = \log \alpha,\quad\Delta \phi(0) = \beta_0(\alpha),\quad \phi^\prime(0) = (\Delta \phi)^\prime (0) = 0,
\end{align*}
where  $\alpha > 0$ and $\beta_0(\alpha) \in [-4d\alpha^\frac{1}{2}, 0)$. According to Theorem~\ref{th:3}, the corresponding solution $(u,v,w)$ obtained via Proposition~\ref{local_2} blows up in finite time. Moreover, Lemma~\ref{d13} yields that
\begin{align*}
\sup_{R > 0} R^{4-d}\int_0^R r^{d-1}u_0(r) dr &= \lambda \sup_{R > 0}R^{4-d}\int_0^R r^{d-1}e^{\phi(r)} dr\\
&= 8(d-2)\lambda\\
&> 8(d-2)
\end{align*}
and similarly,
\begin{align*}
\sup_{R > 0}R^{2-d}\int_0^Rr^{d-1}w_0(r) dr = 4\lambda> 4.
\end{align*}
This completes the proof.

\medskip
(ii) Let $\lambda \in (0,1]$ so that
\begin{align*}
\lambda K_0 > 8(d-2),\quad \lambda K_1 > 4,
\end{align*}
where $K_0$ and $K_1$ are the constants in Lemma~\ref{d5~12}.
In analogy with (i), the initial data are chosen to be $(u_0,w_0) = (\lambda e^\phi, \lambda (-\Delta)\phi)$, where the same function $\phi$ is employed. In light of Theorem~\ref{th:3}, the solution $(u,v,w)$ established by Proposition~\ref{local_2} exists globally in time, nevertheless Lemma~\ref{d5~12} implies that
the initial data $(u_0,w_0)$ satisfies the following condition
\begin{align*}
\sup_{R > 0} R^{4-d}\int_0^R r^{d-1}u_0(r) dr &= \lambda \sup_{R > 0}R^{4-d}\int_0^R r^{d-1}e^{\phi(r)} dr =\lambda K_0 > 8(d-2)
\end{align*}
and moreover
\begin{align*}
\sup_{R > 0}R^{2-d}\int_0^Rr^{d-1}w_0(r) dr = \lambda K_1> 4.
\end{align*}
Thus we arrive at the desired conclusion.
\end{proof}

\textbf{Acknowledgments}

The author wishes to thank my supervisor Kentaro Fujie in Tohoku University for several useful comments concerning this paper.

\begin{bibdiv}
\begin{biblist}

\bib{A1995}{book}{
   author={Amann, Herbert},
   title={Linear and quasilinear parabolic problems. Vol. I},
   series={Monographs in Mathematics},
   volume={89},
   note={Abstract linear theory},
   publisher={Birkh\"auser Boston, Inc., Boston, MA},
   date={1995},
   pages={xxxvi+335},
}

\bib{AGG2006}{article}{
   author={Arioli, Gianni},
   author={Gazzola, Filippo},
   author={Grunau, Hans-Christoph},
   title={Entire solutions for a semilinear fourth order elliptic problem
   with exponential nonlinearity},
   journal={J. Differential Equations},
   volume={230},
   date={2006},
   number={2},
   pages={743--770},
}

\bib{BBTW2015}{article}{
   author={Bellomo, N.},
   author={Bellouquid, A.},
   author={Tao, Y.},
   author={Winkler, M.},
   title={Toward a mathematical theory of Keller-Segel models of pattern
   formation in biological tissues},
   journal={Math. Models Methods Appl. Sci.},
   volume={25},
   date={2015},
   number={9},
   pages={1663--1763},
}

\bib{BFFG2012}{article}{
   author={Berchio, Elvise},
   author={Farina, Alberto},
   author={Ferrero, Alberto},
   author={Gazzola, Filippo},
   title={Existence and stability of entire solutions to a semilinear fourth
   order elliptic problem},
   journal={J. Differential Equations},
   volume={252},
   date={2012},
   number={3},
   pages={2596--2616},
}

\bib{B1995}{article}{
   author={Biler, Piotr},
   title={The Cauchy problem and self-similar solutions for a nonlinear
   parabolic equation},
   journal={Studia Math.},
   volume={114},
   date={1995},
   number={2},
   pages={181--205},
}

\bib{B1998}{article}{
   author={Biler, Piotr},
   title={Local and global solvability of some parabolic systems modelling
   chemotaxis},
   journal={Adv. Math. Sci. Appl.},
   volume={8},
   date={1998},
   number={2},
   pages={715--743},
}

\bib{B1999}{article}{
   author={Biler, Piotr},
   title={Global solutions to some parabolic-elliptic systems of chemotaxis},
   journal={Adv. Math. Sci. Appl.},
   volume={9},
   date={1999},
   number={1},
   pages={347--359},
   issn={1343-4373},
   review={\MR{1690388}},
}

\bib{BHeN1994}{article}{
   author={Biler, Piotr},
   author={Hebisch, Waldemar},
   author={Nadzieja, Tadeusz},
   title={The Debye system: existence and large time behavior of solutions},
   journal={Nonlinear Anal.},
   volume={23},
   date={1994},
   number={9},
   pages={1189--1209},
}

\bib{BHN1994}{article}{
   author={Biler, Piotr},
   author={Hilhorst, Danielle},
   author={Nadzieja, Tadeusz},
   title={Existence and nonexistence of solutions for a model of
   gravitational interaction of particles. II},
   journal={Colloq. Math.},
   volume={67},
   date={1994},
   number={2},
   pages={297--308},
}

\bib{BKP2019}{article}{
   author={Biler, Piotr},
   author={Karch, Grzegorz},
   author={Pilarczyk, Dominika},
   title={Global radial solutions in classical Keller-Segel model of
   chemotaxis},
   journal={J. Differential Equations},
   volume={267},
   date={2019},
   number={11},
   pages={6352--6369},
}

\bib{BKW2023}{article}{
   author={Biler, Piotr},
   author={Karch, Grzegorz},
   author={Wakui, Hiroshi},
   title={Large self-similar solutions of the parabolic-elliptic
   Keller-Segel model},
   journal={Indiana Univ. Math. J.},
   volume={72},
   date={2023},
   number={3},
   pages={1027--1054},
}

\bib{BKZ2016}{article}{
   author={Biler, Piotr},
   author={Karch, Grzegorz},
   author={Zienkiewicz, Jacek},
   title={Morrey spaces norms and criteria for blowup in chemotaxis models},
   journal={Netw. Heterog. Media},
   volume={11},
   date={2016},
   number={2},
   pages={239--250},
}

\bib{BKZ2018}{article}{
   author={Biler, Piotr},
   author={Karch, Grzegorz},
   author={Zienkiewicz, Jacek},
   title={Large global-in-time solutions to a nonlocal model of chemotaxis},
   journal={Adv. Math.},
   volume={330},
   date={2018},
   pages={834--875},
}

\bib{BN1994}{article}{
   author={Biler, Piotr},
   author={Nadzieja, Tadeusz},
   title={Existence and nonexistence of solutions for a model of
   gravitational interaction of particles. I},
   journal={Colloq. Math.},
   volume={66},
   date={1994},
   number={2},
   pages={319--334},
}

\bib{CC2008}{article}{
   author={Calvez, Vincent},
   author={Corrias, Lucilla},
   title={The parabolic-parabolic Keller-Segel model in $\Bbb R^2$},
   journal={Commun. Math. Sci.},
   volume={6},
   date={2008},
   number={2},
   pages={417--447},
}

\bib{Chan1942}{book}{
   author={Chandrasekhar, S.},
   title={Principles of Stellar Dynamics},
   publisher={University of Chicago Press, Chicago, IL},
   date={1942},
   pages={x + 251},
}

\bib{CD2000}{book}{
   author={Cholewa, Jan W.},
   author={Dlotko, Tomasz},
   title={Global attractors in abstract parabolic problems},
   series={London Mathematical Society Lecture Note Series},
   volume={278},
   publisher={Cambridge University Press, Cambridge},
   date={2000},
   pages={xii+235},
}

\bib{FS2017}{article}{
   author={Fujie, Kentarou},
   author={Senba, Takasi},
   title={Application of an Adams type inequality to a two-chemical
   substances chemotaxis system},
   journal={J. Differential Equations},
   volume={263},
   date={2017},
   number={1},
   pages={88--148},
}

\bib{FS2019}{article}{
   author={Fujie, Kentarou},
   author={Senba, Takasi},
   title={Blowup of solutions to a two-chemical substances chemotaxis system
   in the critical dimension},
   journal={J. Differential Equations},
   volume={266},
   date={2019},
   number={2-3},
   pages={942--976},
}

\bib{GM1989}{article}{
   author={Giga, Yoshikazu},
   author={Miyakawa, Tetsuro},
   title={Navier-Stokes flow in $\bold R^3$ with measures as initial
   vorticity and Morrey spaces},
   journal={Comm. Partial Differential Equations},
   volume={14},
   date={1989},
   number={5},
   pages={577--618},
}

\bib{GT1983}{book}{
   author={Gilbarg, David},
   author={Trudinger, Neil S.},
   title={Elliptic partial differential equations of second order},
   series={Grundlehren der mathematischen Wissenschaften [Fundamental
   Principles of Mathematical Sciences]},
   volume={224},
   edition={2},
   publisher={Springer-Verlag, Berlin},
   date={1983},
   pages={xiii+513},
}

\bib{G2014}{article}{
   author={Guo, Zongming},
   title={Further study of entire radial solutions of a biharmonic equation
   with exponential nonlinearity},
   journal={Ann. Mat. Pura Appl. (4)},
   volume={193},
   date={2014},
   number={1},
   pages={187--201},
}

\bib{HV1996}{article}{
   author={Herrero, Miguel A.},
   author={Vel\'azquez, Juan J. L.},
   title={Chemotactic collapse for the Keller-Segel model},
   journal={J. Math. Biol.},
   volume={35},
   date={1996},
   number={2},
   pages={177--194},
}

\bib{HV1997}{article}{
   author={Herrero, Miguel A.},
   author={Vel\'azquez, Juan J. L.},
   title={A blow-up mechanism for a chemotaxis model},
   journal={Ann. Scuola Norm. Sup. Pisa Cl. Sci. (4)},
   volume={24},
   date={1997},
   number={4},
   pages={633--683 (1998)},
}

\bib{HL2025}{article}{
   author={Hosono, Tatsuya},
   author={Lauren\c cot, Philippe},
   title={Global existence and boundedness of solutions to a fully parabolic
   chemotaxis system with indirect signal production in $\Bbb R^4$},
   journal={J. Differential Equations},
   volume={416},
   date={2025},
   pages={2085--2133},
}

\bib{J2015}{article}{
   author={Jin, Hai-Yang},
   title={Boundedness of the attraction-repulsion Keller-Segel system},
   journal={J. Math. Anal. Appl.},
   volume={422},
   date={2015},
   number={2},
   pages={1463--1478},
}

\bib{K1963}{article}{
   author={Kaplan, Stanley},
   title={On the growth of solutions of quasi-linear parabolic equations},
   journal={Comm. Pure Appl. Math.},
   volume={16},
   date={1963},
   pages={305--330},
}

\bib{K1992}{article}{
   author={Kato, Tosio},
   title={Strong solutions of the Navier-Stokes equation in Morrey spaces},
   journal={Bol. Soc. Brasil. Mat. (N.S.)},
   volume={22},
   date={1992},
   number={2},
   pages={127--155},
}

\bib{KS1970}{article}{
   author={Keller, Evelyn F.},
   author={Segel, Lee A.},
   title={Initiation of slime mold aggregation viewed as an instability},
   journal={J. Theoret. Biol.},
   volume={26},
   date={1970},
   number={3},
   pages={399--415},
}

\bib{LSU1968}{book}{
   author={Lady\v zenskaja, O. A.},
   author={Solonnikov, V. A.},
   author={Ural\cprime ceva, N. N.},
   title={Linear and quasilinear equations of parabolic type},
   language={Russian},
   series={Translations of Mathematical Monographs},
   volume={Vol. 23},
   note={Translated from the Russian by S. Smith},
   publisher={American Mathematical Society, Providence, RI},
   date={1968},
}

\bib{Lau2019}{article}{
   author={Lauren\c cot, Philippe},
   title={Global bounded and unbounded solutions to a chemotaxis system with
   indirect signal production},
   journal={Discrete Contin. Dyn. Syst. Ser. B},
   volume={24},
   date={2019},
   number={12},
   pages={6419--6444},
}

\bib{Luca2003}{article}{
author = {Luca, Magdalena},
author = {Chavez-Ross, Alexandra},
author = {Edelstein-Keshet, Leah},
author = {Mogilner, Alex},
title = {Chemotactic Signaling, Microglia, and Alzheimer's Disease Senile Plaques: Is There a Connection?},
journal = {Bulletin of Mathematical Biology},
volume = {65},
number = {4},
pages = {693-730},
date = {2003},
}

\bib{L1995}{book}{
   author={Lunardi, Alessandra},
   title={Analytic semigroups and optimal regularity in parabolic problems},
   series={Modern Birkh\"auser Classics},
   note={[2013 reprint of the 1995 original] [MR1329547]},
   publisher={Birkh\"auser/Springer Basel AG, Basel},
   date={1995},
   pages={xviii+424},
}

\bib{M2013}{article}{
   author={Mizoguchi, Noriko},
   title={Global existence for the Cauchy problem of the parabolic-parabolic
   Keller-Segel system on the plane},
   journal={Calc. Var. Partial Differential Equations},
   volume={48},
   date={2013},
   number={3-4},
   pages={491--505},
}

\bib{M2020}{article}{
   author={Mizoguchi, Noriko},
   title={Finite-time blowup in Cauchy problem of parabolic-parabolic
   chemotaxis system},
   language={English, with English and French summaries},
   journal={J. Math. Pures Appl. (9)},
   volume={136},
   date={2020},
   pages={203--238},
}

\bib{N1995}{article}{
   author={Nagai, Toshitaka},
   title={Blow-up of radially symmetric solutions to a chemotaxis system},
   journal={Adv. Math. Sci. Appl.},
   volume={5},
   date={1995},
   number={2},
   pages={581--601},
   issn={1343-4373},
   review={\MR{1361006}},
}

\bib{NSS2000}{article}{
   author={Nagai, Toshitaka},
   author={Senba, Takasi},
   author={Suzuki, Takashi},
   title={Chemotactic collapse in a parabolic system of mathematical
   biology},
   journal={Hiroshima Math. J.},
   volume={30},
   date={2000},
   number={3},
   pages={463--497},
   issn={0018-2079},
   review={\MR{1799300}},
}

\bib{NSY1997}{article}{
   author={Nagai, Toshitaka},
   author={Senba, Takasi},
   author={Yoshida, Kiyoshi},
   title={Application of the Trudinger-Moser inequality to a parabolic
   system of chemotaxis},
   journal={Funkcial. Ekvac.},
   volume={40},
   date={1997},
   number={3},
   pages={411--433},
   issn={0532-8721},
   review={\MR{1610709}},
}

\bib{N2021}{article}{
   author={Naito, Y\=uki},
   title={Blow-up criteria for the classical Keller-Segel model of
   chemotaxis in higher dimensions},
   journal={J. Differential Equations},
   volume={297},
   date={2021},
   pages={144--174},
}

\bib{S2025}{article}{
   author={Soga, Yuri},
   title={Concentration phenomena to a chemotaxis system with indirect
   signal production},
   journal={J. Evol. Equ.},
   volume={25},
   date={2025},
   number={4},
   pages={Paper No. 95, 45},
}

\bib{STP2013}{article}{
   author={Strohm, S.},
   author={Tyson, R. C.},
   author={Powell, J. A.},
   title={Pattern formation in a model for mountain pine beetle dispersal:
   linking model predictions to data},
   journal={Bull. Math. Biol.},
   volume={75},
   date={2013},
   number={10},
   pages={1778--1797},
}

\bib{TW2017}{article}{
   author={Tao, Youshan},
   author={Winkler, Michael},
   title={Critical mass for infinite-time aggregation in a chemotaxis model
   with indirect signal production},
   journal={J. Eur. Math. Soc. (JEMS)},
   volume={19},
   date={2017},
   number={12},
   pages={3641--3678},
}

\bib{T1992}{article}{
   author={Taylor, Michael E.},
   title={Analysis on Morrey spaces and applications to Navier-Stokes and
   other evolution equations},
   journal={Comm. Partial Differential Equations},
   volume={17},
   date={1992},
   number={9-10},
   pages={1407--1456},
}

\bib{W2010}{article}{
   author={Winkler, Michael},
   title={Aggregation vs. global diffusive behavior in the
   higher-dimensional Keller-Segel model},
   journal={J. Differential Equations},
   volume={248},
   date={2010},
   number={12},
   pages={2889--2905},
}

\bib{W2023_1}{article}{
   author={Winkler, Michael},
   title={Classical solutions to Cauchy problems for parabolic-elliptic
   systems of Keller-Segel type},
   journal={Open Math.},
   volume={21},
   date={2023},
   number={1},
   pages={Paper No. 20220578, 19},
}

\bib{W2023_2}{article}{
   author={Winkler, Michael},
   title={Solutions to the Keller-Segel system with non-integrable behavior
   at spatial infinity},
   journal={J. Elliptic Parabol. Equ.},
   volume={9},
   date={2023},
   number={2},
   pages={919--959},
}

\end{biblist}
\end{bibdiv}

\bigskip
\address{ 
Mathematical Institute \\
Tohoku University \\
Sendai 980-8578 \\
Japan
}
{soga.yuri.q6@dc.tohoku.ac.jp}

\end{document}